\newtheorem{thm}{Theorem}[section]
\newtheorem{cor}[thm]{Corollary}
\newtheorem{lem}[thm]{Lemma}
\newtheorem{prop}[thm]{Proposition}
\theoremstyle{definition}
\newtheorem{dfn}[thm]{Definition}
\newtheorem{rem}[thm]{Remark}
\newtheorem{ques}[thm]{Question}
\newtheorem{conj}[thm]{Conjecture}
\newtheorem{ex}[thm]{Example}
\newtheorem{claim}{Claim}
\newtheorem*{claim*}{Claim}
\theoremstyle{remark}
\newtheorem*{ac}{Acknowlegments}
\numberwithin{equation}{thm}
\def\add{\operatorname{add}}
\def\ann{\operatorname{ann}}
\def\assh{\operatorname{Assh}}
\def\cm{\operatorname{CM}}
\def\codim{\operatorname{codim}}
\def\cok{\operatorname{Coker}}
\def\End{\operatorname{End}}
\def\Ext{\operatorname{Ext}}
\def\height{\operatorname{ht}}
\def\h{\operatorname{H}}
\def\Hom{\operatorname{Hom}}
\def\id{\mathrm{id}}
\def\im{\operatorname{Im}}
\def\ker{\operatorname{Ker}}
\def\lend{\operatorname{\underline{End}}}
\def\lhom{\operatorname{\underline{Hom}}}
\def\lmod{\operatorname{\underline{mod}}}
\def\m{\mathfrak{m}}
\def\mod{\operatorname{mod}}
\def\n{\mathfrak{n}}
\def\ng{\operatorname{NG}}
\def\p{\mathfrak{p}}
\def\res{\operatorname{res}}
\def\soc{\operatorname{soc}}
\def\spec{\operatorname{Spec}}
\def\syz{\Omega}
\def\Tor{\operatorname{Tor}}
\def\Tr{\operatorname{Tr}}
\def\tr{\operatorname{tr}}
\def\uend{\operatorname{\overline{End}}}
\def\uhom{\operatorname{\overline{Hom}}}
\def\V{\operatorname{V}}
\begin{document}
\setlength{\baselineskip}{15pt}
\title[traces of canonical modules and annihilator of Ext]{Trace of canonical modules, annihilator of Ext, and classes of rings close to being Gorenstein}
\author{Hailong Dao}
\address{Department of Mathematics, University of Kansas, Lawrence, KS 66045-7523, USA}
\email{hdao@ku.edu}
\urladdr{https://www.math.ku.edu/~hdao/}
\author{Toshinori Kobayashi}
\address{Graduate School of Mathematics, Nagoya University, Furocho, Chikusaku, Nagoya, Aichi 464-8602, Japan}
\email{m16021z@math.nagoya-u.ac.jp}
\author{Ryo Takahashi}
\address{Graduate School of Mathematics, Nagoya University, Furocho, Chikusaku, Nagoya, Aichi 464-8602, Japan}
\email{takahashi@math.nagoya-u.ac.jp}
\urladdr{http://www.math.nagoya-u.ac.jp/~takahashi/}
\thanks{2010 {\em Mathematics Subject Classification.}  13D07, 13H10}
\thanks{{\em Key words and phrases.} Cohen--Macaulay ring, Gorenstein ring, nearly Gorenstein ring, almost Gorenstein ring, trace ideal}
\thanks{Hailong Dao was partly supported by Simons Collaboration Grant FND0077558.
Toshinori Kobayashi was partly supported by JSPS Grant-in-Aid for JSPS Fellows 18J20660.
Ryo Takahashi was partly supported by JSPS Grant-in-Aid for Scientific
Research 19K03443.}
\begin{abstract}
In this note we study trace ideals of canonical modules.
Characterizations of the trace ideals in terms of annihilators of certain Ext modules are given.
We apply our results to study many classes of rings close to being Gorenstein that appear in recent literature.
We discuss the behavior of nearly Gorensteinness, introduced by Herzog-Hibi-Stamate, under reductions by regular sequences.
A nearly Gorenstein version of the Tachikawa conjecture is posed, and an affirmative answer given for type $2$ rings.
We also introduce the class of weakly almost Gorenstein rings using the canonical module, and show that they  are  almost Gorenstein rings in dimension one, and are closely related to Teter rings in the artinian case.
We explain their basic properties and give some examples.
\end{abstract}
\maketitle
\section{Introduction}

Let $R$ be a commutative ring.
Recall that for an $M$-module, its trace ideal $\tr M$ is a sum of images of all homomorphisms from $M$ to $R$.
Trace ideals have been studied in various situations and have enjoyed a surge in popularity in recent years, see \cite{GIK,lp,L} and the references therein.

The trace ideals of canonical modules over Cohen--Macaulay rings are quite important tools to understand the difference between Gorenstein rings and non-Gorenstein rings.
For example, it is known that the trace ideal of the canonical module defines a non-Gorenstein locus of the ring; see \cite[Lemma 1.4]{D}.
There are many works on the trace ideals of canonical modules \cite{HHS,HMP,HV}.

In this paper, we  compare trace ideals of canonical modules with  annihilators of certain Ext modules.
Our first main result is the following.

\begin{thm} \label{thA}
Let $(R,\m)$ be a $d$-dimensional Cohen--Macaulay local ring with a canonical module $\omega$.
Then equalities $\tr \omega = \ann \Ext^{>0}_R(\omega,\mod R) = \ann \Ext^1_R(\cm(R),R)$ hold.
Additionally assume that $R$ is Gorenstein on the punctured spectrum.
Then an equality $\tr \omega=\ann \Ext^{d+1}_R(\Tr \omega,R)$ holds, where $\Tr \omega$ is the transpose of $\omega$.
\end{thm}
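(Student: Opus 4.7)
My plan is to prove the three equalities in turn; only the last one uses the Gorenstein-on-punctured-spectrum hypothesis.

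For $\tr\omega = \ann\Ext^{>0}_R(\omega,\mod R)$, the central technical input is the characterization: $r \in \tr\omega$ if and only if multiplication by $r$ on $\omega$ factors through some finitely generated free module $R^n$. Granting this, any such factorization $\omega \to R^n \to \omega$ of $r \cdot \id_\omega$ forces multiplication by $r$ on $\Ext^i_R(\omega,M)$ to factor through $\Ext^i_R(R^n,M) = 0$ for every $i > 0$ and every finite $M$, yielding $\tr\omega \subseteq \ann\Ext^{>0}_R(\omega,\mod R)$. For the reverse inclusion and the equality with $\ann\Ext^1_R(\cm(R),R)$, I would test against carefully chosen modules---such as $M = \tr\omega$ and $M = R/\tr\omega$ connected by $0 \to \tr\omega \to R \to R/\tr\omega \to 0$---and combine dimension shifting (syzygies of MCM modules are MCM over the CM ring $R$) with the Yoneda pairing and the canonical evaluation $\omega \otimes_R \Hom_R(\omega,R) \to R$ whose image is $\tr\omega$.

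Turning to the final assertion, the containment $\tr\omega \subseteq \ann\Ext^{d+1}_R(\Tr\omega,R)$ is immediate from the equality $\tr\omega = \ann\Ext^1_R(\cm(R),R)$ just established: since $R$ is CM of dimension $d$, the module $\syz^d\Tr\omega$ is MCM, and dimension shifting gives an isomorphism $\Ext^{d+1}_R(\Tr\omega,R) \cong \Ext^1_R(\syz^d\Tr\omega,R)$, which is therefore annihilated by $\tr\omega$.

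For the reverse $\ann\Ext^{d+1}_R(\Tr\omega,R) \subseteq \tr\omega$, the Gorenstein-on-punctured-spectrum hypothesis becomes essential: under it, $\omega$ is locally free of rank one on $\spec R \setminus \{\m\}$, so $\Tr\omega$ is projective there and $\Ext^{>0}_R(\Tr\omega,R)$ has finite length. I would exploit the stable isomorphism $\omega^* \cong \syz^2\Tr\omega$ obtained by dualizing a presentation of $\omega$ into the four-term exact sequence $0 \to \omega^* \to F_0^* \to F_1^* \to \Tr\omega \to 0$ to rewrite $\Ext^{d+1}_R(\Tr\omega,R) \cong \Ext^{d-1}_R(\omega^*,R)$ for $d \geq 2$. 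Then local duality, combined with the first equality of the theorem, should identify $\Ext^{d+1}_R(\Tr\omega,R)$ with the Matlis dual of a specific finite-length quotient of $\omega$ whose annihilator is visibly $\tr\omega$; any element $r$ annihilating $\Ext^{d+1}_R(\Tr\omega,R)$ then annihilates that quotient, which forces $r \in \tr\omega$ via the first equality.

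The main obstacle is this last duality identification: pinning down the precise finite-length module Matlis-dual to $\Ext^{d+1}_R(\Tr\omega,R)$ and verifying its annihilator. The low-dimensional cases $d \leq 1$ require separate handling via the Auslander-Bridger biduality sequence $0 \to \Ext^1_R(\Tr\omega,R) \to \omega \to \omega^{**} \to \Ext^2_R(\Tr\omega,R) \to 0$; here the Gorenstein-on-punctured-spectrum hypothesis forces $R$ to be generically Gorenstein, so $\omega$ is a fractional ideal and the cokernel $\omega^{**}/\omega$ relevant for $d = 1$ can be analyzed concretely.
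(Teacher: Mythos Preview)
Your plan has two genuine gaps, both concerning reverse inclusions.

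For $\ann\Ext^{>0}_R(\omega,\mod R)\subseteq\tr\omega$, testing against $\tr\omega$ or $R/\tr\omega$ does not lead anywhere obvious: there is no evident reason why an element outside $\tr\omega$ should fail to annihilate $\Ext^i_R(\omega,R/\tr\omega)$. The correct test module is $\syz\omega$. If $r$ annihilates $\Ext^1_R(\omega,\syz\omega)$, then applying $\Hom_R(\omega,-)$ to $0\to\syz\omega\to R^n\to\omega\to 0$ shows that $r\cdot\id_\omega$ lifts through $R^n$, i.e.\ factors through a free module; by your own characterization this gives $r\in\tr\omega$. This is exactly how the paper closes the circle, packaged as $\ann\Ext^{>0}_R(\omega,\mod R)=\ann\Ext^1_R(\omega,\syz\omega)=\ann\lend_R(\omega)=\tr\omega$. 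For the equality with $\ann\Ext^1_R(\cm(R),R)$ you are also missing the key ingredient: the isomorphism $\Ext^1_R(M,R)\cong\Ext^1_R(\omega,M^\dag)$ for $M\in\cm(R)$ (coming from the duality $(-)^\dag=\Hom_R(-,\omega)$ on $\cm(R)$), which converts ``$\Ext^1$ of anything MCM into $R$'' into ``$\Ext^1$ of $\omega$ into anything MCM''; since $\syz\omega$ is MCM, this sandwiches $\ann\Ext^1_R(\cm(R),R)$ between $\ann\Ext^{>0}_R(\omega,\mod R)$ and $\ann\Ext^1_R(\omega,\syz\omega)$.

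For the last equality, your instinct to use local duality is right, but rewriting as $\Ext^{d-1}_R(\omega^*,R)$ leaves $R$ in the second slot, where local duality does not apply directly; you also have not named the finite-length module whose annihilator is to be computed, and the separate treatment of $d\le 1$ is an unnecessary complication. The paper's route is uniform in $d$: one has
\[
\Ext^{d+1}_R(\Tr\omega,R)\cong\Ext^1_R(\syz^d\Tr\omega,R)\cong\Ext^1_R(\omega,(\syz^d\Tr\omega)^\dag)\cong\Ext^d_R(\lend_R(\omega),\omega),
\]
the middle isomorphism being the canonical-duality isomorphism above and the last a standard identity (Yoshino). Now the punctured-spectrum hypothesis makes $\lend_R(\omega)\cong\Tor_1^R(\Tr\omega,\omega)$ finite length, so local duality (in the form $\ann_R M=\ann_R\Ext^d_R(M,\omega)$ for finite-length $M$) gives $\ann\Ext^{d+1}_R(\Tr\omega,R)=\ann\lend_R(\omega)=\tr\omega$. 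The ``specific finite-length quotient of $\omega$'' you were looking for is precisely $\lend_R(\omega)$, and its annihilator is $\tr\omega$ by the stable-endomorphism characterization already established in part one.
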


For the explanation of the notations and the proof, see Definition \ref{d1}, Theorem \ref{m} and Proposition \ref{1}.

Using Theorem \ref{thA}, we study nearly Gorenstein rings.
Recall that a Cohen--Macaulay local ring $(R,\m)$ with a canonical module $\omega$ is \textit{nearly Gorenstein} if $\tr \omega$ contains the maximal ideal $\m$ \cite{HHS}.
We consider a situation that $\underline{x}$ is a regular sequence of a local ring $R$ and $R/(\underline{x})$ is nearly Gorenstein.
In general, $R$ may not be nearly Gorenstein.
However, if we take suitable regular sequences $\underline{x}$, we can characterize the nearly Gorensteinness of $R$ by that of $R/(\underline{x})$.
Indeed, we prove the following theorem.

\begin{thm} \label{thB}
Let $(R,\m)$ be a Cohen--Macaulay local ring with a canonical module $\omega$.
Then the following conditions are equivalent.
\begin{enumerate}[\rm(1)]
\item $R$ is nearly Gorenstein.
\item $R/(\underline{x})$ is nearly Gorenstein for any $R$-regular sequence $\underline{x}=x_1,\dots,x_t$.
\item $R$ admits an $R$-regular sequence $\underline{x}=x_1,\dots,x_t$ in $\m^2$ such that $R/(\underline{x})$ is nearly Gorenstein.
\item $R$ admits an $R$-regular sequence $\underline{x}=x_1,\dots,x_t$ in $\tr\omega$ such that $R/(\underline{x})$ is nearly Gorenstein.
\end{enumerate}
\end{thm}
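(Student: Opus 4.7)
The plan is to use Theorem~\ref{thA} to translate nearly Gorensteinness into the condition $\m\Ext^1_R(M,R)=0$ for all $M\in\cm(R)$, and then establish a short cycle of implications. The easy direction (1)$\Rightarrow$(2) follows from a direct base-change observation: every $\phi\in\Hom_R(\omega,R)$ reduces to $\bar\phi\in\Hom_{R/(\underline{x})}(\omega/\underline{x}\omega,R/(\underline{x}))$ with image $(\phi(\omega)+(\underline{x}))/(\underline{x})$, giving
$$
\tr_{R/(\underline{x})}(\omega/\underline{x}\omega)\supseteq (\tr_R\omega+(\underline{x}))/(\underline{x}),
$$
so $\m\subseteq\tr_R\omega$ descends along any regular sequence. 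The implications (2)$\Rightarrow$(3) and (2)$\Rightarrow$(4) are then immediate, by taking the empty regular sequence (or, in positive depth, an actual non-zerodivisor inside $\m^2$ or inside $\tr_R\omega$, which exists because (1) already gives $\tr_R\omega\supseteq\m$).

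The substantive content lies in (3)$\Rightarrow$(1) and (4)$\Rightarrow$(1). Both reduce to the case $t=1$ by induction on $t$: killing $x_1$ first, the images $\bar x_2,\dots,\bar x_t$ still form a regular sequence lying in $(\m/(x_1))^2$ in case (3) (since $x_1\in\m^2$ makes $\m^2/(x_1)=(\m/(x_1))^2$) or in $\tr_{R/(x_1)}(\omega/x_1\omega)$ in case (4) (via the easy inclusion above). So assume $t=1$ and write $x=x_1$. Fix any $M\in\cm(R)$; because $\omega$ is MCM, $x$ is $M$-regular, hence $M/xM\in\cm(R/x)$ and the standard change-of-rings isomorphism yields $\Ext^1_{R/x}(M/xM,R/x)\cong\Ext^1_R(M,R/x)$. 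Applying Theorem~\ref{thA} to the nearly Gorenstein ring $R/x$ gives $\m\Ext^1_R(M,R/x)=0$, and the long exact sequence coming from $0\to R\xrightarrow{x}R\to R/x\to 0$ supplies an injection
$$
\Ext^1_R(M,R)/x\Ext^1_R(M,R)\hookrightarrow \Ext^1_R(M,R/x).
$$
Combining these yields $\m\Ext^1_R(M,R)\subseteq x\Ext^1_R(M,R)$.

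I then split into the two cases. In (3), $x\in\m^2$ gives $\m\Ext^1_R(M,R)\subseteq\m^2\Ext^1_R(M,R)$, and Nakayama's lemma on the finitely generated module $\Ext^1_R(M,R)$ forces $\m\Ext^1_R(M,R)=0$. In (4), Theorem~\ref{thA} for $R$ already tells us that $x\in\tr_R\omega$ annihilates $\Ext^1_R(M,R)$, so $x\Ext^1_R(M,R)=0$, and therefore $\m\Ext^1_R(M,R)=0$ as well. Since $M$ was arbitrary, Theorem~\ref{thA} for $R$ gives $\m\subseteq\tr_R\omega$, i.e., (1). The main obstacle to watch is the simultaneous use of Theorem~\ref{thA} at $R$ and at $R/x$ together with the change-of-rings isomorphism---which is precisely where MCM-ness of $M$ is needed---after which Nakayama closes case (3) and Theorem~\ref{thA} itself closes case (4).
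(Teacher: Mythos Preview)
Your proof is correct and follows essentially the same approach as the paper: translate nearly Gorensteinness into $\m\Ext^1_R(\cm(R),R)=0$ via Theorem~\ref{thA}, reduce to $t=1$, and compare $\Ext^1_R(M,R)$ with $\Ext^1_{R/x}(M/xM,R/x)$ through a long exact sequence plus change of rings, finishing with Nakayama in case~(3) and with $x\in\tr\omega$ annihilating $\Ext^1$ in case~(4). The only cosmetic difference is that you run the long exact sequence in the second variable (from $0\to R\xrightarrow{x}R\to R/x\to 0$) while the paper runs it in the first variable (from $0\to M\xrightarrow{x}M\to M/xM\to 0$); also, your phrase ``because $\omega$ is MCM, $x$ is $M$-regular'' should read ``because $M$ is MCM''.
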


Note that the implication \textrm{(1)} $\Rightarrow$ \textrm{(2)} in Theorem \ref{thB} is already shown in \cite[Proposition 2.3]{HHS}.
The almost Gorenstein local rings of dimension one is a notable subclass of nearly Gorenstein rings.
While trying to give similar characterizations of almost Gorenstein local rings as in Theorem \ref{thB}, we discover the class of weakly almost Gonrestein rings.
We say that a Cohen--Macaulay local ring $(R,\m)$ of dimension at most $1$  is \textit{weakly almost Gonrestein ring} if it has a canonical module $\omega$ and an exact sequence $R\to \omega \to (R/\m)^{\oplus n}\to 0$ for some $n$.
Note that the first map might not be injective.
We see that the class of almost Gorenstein local rings of dimension one fits into the following characterization.

\begin{thm} \label{thC}
Let $R$ be a Cohen--Macaulay local ring having a canonical module $\omega$ and Krull dimension $1$.
Then the followings are equivalent.
\begin{enumerate}[\rm(1)]
\item $R$ is an almost Gorenstein ring.
\item $R$ is weakly almost Gorenstein.
\item $R/(x)$ is weakly almost Gorenstein for any $R$-regular element $x\in\m$.
\item $R/(x)$ is weakly almost Gorenstein for some $R$-regular element $x\in\m^2$.
\end{enumerate}
\end{thm}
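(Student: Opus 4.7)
The plan is to prove the implications $(1)\Rightarrow(3)\Rightarrow(4)\Rightarrow(2)\Rightarrow(1)$. For $(1)\Rightarrow(3)$, starting from a defining sequence $0\to R\to\omega\to C\to 0$ with $\m C=0$, I tensor with $R/(x)$ for any $R$-regular $x\in\m$. Since $x\in\m$ already annihilates $C$, this directly produces $R/(x)\to\omega/x\omega\to(R/\m)^{\oplus n}\to 0$; note $\omega/x\omega$ is the canonical module of the artinian ring $R/(x)$ because $x$ is regular on the Cohen--Macaulay module $\omega$. The implication $(3)\Rightarrow(4)$ is essentially formal: in a one-dimensional Cohen--Macaulay local ring $\m$ is not contained in the union of the minimal primes, so some $y\in\m$ is regular, whence $y^2\in\m^2$ is also regular, and applying $(3)$ to $x=y^2$ yields $(4)$.

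The crux is $(4)\Rightarrow(2)$. Given a regular $x\in\m^2$ with an exact sequence $R/(x)\xrightarrow{\psi}\omega/x\omega\to(R/\m)^{\oplus n}\to 0$, I would set $\bar e:=\psi(1)\in\omega/x\omega$, lift it to $e\in\omega$, and define $\varphi\colon R\to\omega$ by $1\mapsto e$, with cokernel $C:=\cok\varphi$. Reducing $\varphi$ modulo $x$ recovers $\psi$, hence $C/xC\cong\cok\psi\cong(R/\m)^{\oplus n}$. This forces $\m C\subseteq xC$, and the reverse inclusion $xC\subseteq\m C$ is automatic, so $\m C=xC$. The hypothesis $x\in\m^2$ now enters decisively: from $\m C=xC$ and $x\in\m^2$ we obtain $\m C=xC\subseteq\m^2 C=\m\cdot\m C$, and Nakayama's lemma forces $\m C=0$. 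Since $C$ is finitely generated and annihilated by $\m$, it is isomorphic to $(R/\m)^{\oplus m}$ for some $m\geq 0$, producing the presentation required by $(2)$.

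It remains to prove $(2)\Rightarrow(1)$, which amounts to upgrading a presentation $R\xrightarrow{\varphi}\omega\to(R/\m)^{\oplus n}\to 0$ to a short exact sequence by showing $\varphi$ is injective. For any minimal prime $P$ of $R$, the cokernel has finite length and so vanishes after localization at $P$, yielding $0\to(\ker\varphi)_P\to R_P\to\omega_P\to 0$; since $R_P$ is artinian and $\omega_P$ is its canonical module, one has $\ell(R_P)=\ell(\omega_P)$, so a length count gives $(\ker\varphi)_P=0$ at every minimal prime. Because $\ass(\ker\varphi)\subseteq\ass R=\Min R$ by the Cohen--Macaulayness of $R$, this forces $\ker\varphi=0$.

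The step I expect to require the most care is the passage from $\m C=xC$ to $\m C=0$ in the second paragraph, where the hypothesis $x\in\m^2$ rather than merely $x\in\m$ is essential: with $x$ only in $\m$, the Nakayama step collapses and the cokernel $C$ need not be semisimple, so the sharper condition in $(4)$ is genuinely necessary for the lifting argument to succeed.
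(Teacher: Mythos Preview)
Your proof is correct and follows essentially the same route as the paper. The paper organizes the cycle slightly differently---it first proves $(1)\Leftrightarrow(2)$ as a standalone proposition and then closes the loop $(2)\Rightarrow(3)\Rightarrow(4)\Rightarrow(2)$---but the substance of each step matches yours: the same tensoring argument for $(2)\Rightarrow(3)$, the same lift-and-Nakayama argument for $(4)\Rightarrow(2)$. The one genuine difference is that for the injectivity of $\varphi$ in $(2)\Rightarrow(1)$ the paper simply cites \cite[Lemma~3.1]{GTT}, whereas you give a self-contained argument via localization at minimal primes and a length count; your version is a nice elementary substitute for that external reference.
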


Note that by the above theorem, we can easily construct a weakly almost Gorenstein ring $R/(x)$ from an almost Gorenstein ring of dimension one and a regular element $x$.
We also provide some examples of weakly almost Gorenstein artinian rings of embedding dimension three (Example \ref{e51} and \ref{e52}).

On the other hand, weakly almost Gorenstein artinian rings are closely related to another well-studied class of rings called Teter rings, studied by Teter, Huneke-Vraciu, and others (\cite{An, HV}). 
\begin{thm} \label{thD}
Let $(R,\m,k)$ be an artinian local ring such that $k$ is infinite. 
The following conditions are equivalent.
\begin{enumerate}[\rm(1)]
\item $R$ is weakly almost Gorenstein.
\item There exists an ideal $I\subseteq \soc R$ such that $R/I$ is Gorenstein.
\item $R/\soc R$ is Teter.
\item $R/\soc R$ has self-canonical dual maximal ideal (i.e. $\Hom_R(\m/\soc R,\omega)\cong \m/\soc R$.)
\end{enumerate}
\end{thm}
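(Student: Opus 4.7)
The plan is to combine Matlis duality in the artinian setting with the Huneke--Vraciu characterization of Teter rings \cite{HV}. Since $R$ is artinian, $\omega=\E_R(k)$ is the injective envelope of $k$, Matlis duality $(-)^\vee:=\Hom_R(-,\omega)$ is an exact anti-equivalence on finitely generated modules, and $\omega$ is faithful.

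For (1) $\Leftrightarrow$ (2), which is the core of the argument, I would proceed directly. Assume (1), let $\varphi\colon R\to\omega$ be the first map in the defining sequence, and set $I=\ker\varphi$. Since $\cok\varphi\cong k^n$ is killed by $\m$, one has $\m\omega\subseteq R/I$ as submodules of $\omega$; then for $a\in I$ the identity $a(R/I)=0$ yields $(\m a)\omega\subseteq a(\m\omega)\subseteq 0$, and faithfulness of $\omega$ forces $\m a=0$, so $I\subseteq\soc R$. The inclusion $R/I\hookrightarrow 0:_\omega I=\omega_{R/I}$ is an equality by length comparison via Matlis duality, so $\omega_{R/I}\cong R/I$ is cyclic and $R/I$ is Gorenstein. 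For the converse, picking a generator $x_0$ of $\omega_{R/I}=0:_\omega I$ over $R/I$ yields a map $R\to\omega,\ 1\mapsto x_0$ with image $0:_\omega I$, whose cokernel is annihilated by $\m$ since $\m I=0$ and faithfulness of $\omega$ force $\m\omega\subseteq 0:_\omega I$.

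For (3) $\Leftrightarrow$ (4) I would invoke the Huneke--Vraciu characterization of Teter rings \cite{HV}: over an artinian local ring with infinite residue field, $S$ is Teter if and only if $\Hom_S(\m_S,\omega_S)\cong\m_S$. Applying this to $S:=R/\soc R$, together with the base-change identification $\Hom_R(\m/\soc R,\omega)=\Hom_S(\m_S,\omega_S)$ that uses $\omega_S=\Hom_R(S,\omega)$ and the natural $S$-module structure on $\m/\soc R$, yields the equivalence.

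To link (2) with (3), given (2) put $S=R/I$, so that the preimage of $\soc S$ in $R$ is $(I:\m)\supseteq\soc R$ of length $\ell(I)+1$. This forces $I$ to have codimension $0$ or $1$ in $\soc R$: in the codimension $1$ case $R/\soc R=S/\soc S$ directly exhibits the Teter structure; the degenerate case $I=\soc R$, where $R/\soc R$ is itself Gorenstein, is handled by verifying (4) directly from the explicit form of $\omega_R$ given by (1). Conversely, a Teter cover of $R/\soc R$ lifts along $R\twoheadrightarrow R/\soc R$, using infiniteness of $k$, to produce an ideal $I\subseteq\soc R$ with $R/I$ Gorenstein. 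The main obstacle is precisely this case split in (2) $\Rightarrow$ (3) and the corresponding lifting of the Teter cover, where the infiniteness hypothesis on $k$ is indispensable.
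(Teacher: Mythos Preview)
Your argument for (1)$\Leftrightarrow$(2) is correct and essentially matches the paper's. Thereafter the paper takes a different route: it proves (1)$\Rightarrow$(3), cites \cite{An} for (3)$\Rightarrow$(4), and then gives a substantial direct argument for (4)$\Rightarrow$(1) under the hypothesis that $k$ is infinite, by decomposing $X=\m/\soc R$ into indecomposables and using a general-position argument over $k$ to produce a single map $\m\to\omega$ whose kernel is exactly $\soc R$.

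Your sketch has a genuine gap at (3)$\Rightarrow$(2). Saying that ``a Teter cover of $R/\soc R$ lifts along $R\twoheadrightarrow R/\soc R$'' is not an argument: given $R/\soc R\cong T/\soc T$ with $T$ Gorenstein, nothing you have written produces a ring surjection $R\to T$ compatible with the two quotient maps. There is no formal lifting principle here; $T$ is not a priori an $R$-algebra, and the square-zero extension $T\to T/\soc T$ carries a genuine obstruction to lifting ring maps. This lifting problem is exactly what the paper's proof of (4)$\Rightarrow$(1) accomplishes in disguise: finding, among the maps $\m\to\omega$, one whose kernel equals $\soc R$ is equivalent to producing the desired Gorenstein quotient $R/I$, and the infiniteness of $k$ enters precisely in the Zariski-open argument that guarantees such a map exists. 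You have correctly located where the difficulty lies but have not resolved it.

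Two smaller points. First, your invocation of \cite{HV} for (3)$\Leftrightarrow$(4) under ``$k$ infinite'' is off: Huneke--Vraciu assume $2$ is a unit, which is a different hypothesis. The direction (3)$\Rightarrow$(4) is unconditional (this is what the paper cites from \cite{An}), but (4)$\Rightarrow$(3) with only $k$ infinite is not available off the shelf --- it is what the paper establishes via its direct (4)$\Rightarrow$(1)$\Rightarrow$(3). Second, in your (2)$\Rightarrow$(3), the degenerate case $I=\soc R$ cannot be dispatched by ``verifying (4) directly'': for a Gorenstein ring $S$ one has $\Hom_S(\m_S,\omega_S)\cong S/\soc S$, so condition (4) for $S=R/\soc R$ amounts to $\m_S$ being principal. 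Showing that $R/\soc R$ is a principal ideal ring whenever it is Gorenstein requires the structural Lemma~\ref{l57}. The paper sidesteps this by using Lemma~\ref{l58} to show that one may always choose $I$ with $I\subsetneq\soc R$.
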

The organization of this paper is as follows.
In section 2, we collect preliminary notions, and prove Theorem \ref{thA} (Theorem \ref{m} and Proposition \ref{1}).
In section 3, we discuss the behavior of trace ideals of canonical modules under reductions by regular sequences.
Especially, we give the proof of Theorem \ref{thB}.
In section 4, we pose a question about nearly Gorenstein rings in the context of Tachikawa conjecture.
We give a partial answer for the question in the case of Cohen--Macaulay type $2$.
In section 5, we study weakly almost Gorenstein rings.
Theorem \ref{thC} and \ref{thD} are proved in this section.

\section{Trace ideal and annihilation of Ext modules}
Throughout this paper, we assume that all rings are commutative and noetherian, and that all modules are finitely generated.
We allow an empty set to be a regular sequence of an module.
For a ring $R$, we denote by $\mod R$ the category of $R$-modules.
If $R$ is Cohen--Macaulay and local, we denote by $\cm R$ the category of maximal Cohen--Macaulay $R$-modules.

In this section we give some characterizations of trace ideals of canonical modules.

\begin{dfn} \label{d1}
Let $R$ be a ring.
\begin{enumerate}[(1)]
\item
Set $(-)^*=\Hom_R(-,R)$.
The {\em trace ideal} $\tr M$ of an $R$-module $M$ is defined by:
$$
\tr M=\sum_{f\in M^*}\im f=(f(x)\mid f\in M^*,\,x\in M).
$$
\item
For $R$-modules $M,N$ we denote by $\lhom_R(M,N)$ the quotient of $\Hom_R(M,N)$ by the maps $M\to N$ factoring through a free module.
We set $\lend_R(M)=\lhom_R(M,M)$.
\item We denote by $\lmod R:=\mod R/[\add R]$ the stable category of $R$ \cite{ABr}.
Here $\add R$ is a full subcategory of $\mod R$ consisting of all projective $R$-modules.
\item
Let $R$ be a Cohen--Macaulay ring with canonical module $\omega$.
For $R$-modules $M,N$ we denote by $\uhom_R(M,N)$ the quotient of $\Hom_R(M,N)$ by the maps $M\to N$ factoring through a direct sum of copies of $\omega$.
We set $\uend_R(M)=\uhom_R(M,M)$.
\item 
For an $R$-module $M$ and integer $i\ge 0$, $\ann \Ext^i_R(M,\mod R)$ consists of elements $a\in R$ which annihilates $\Ext^i_R(M,N)$ for all $N\in\mod R$.
We use similar notations for $\Tor$, $\lhom$ and $\uhom$.
Also, $\ann \Ext^1_R(\cm(R),M)$ consists of elements $a\in R$ such that $a\Ext^1(N,M)=0$ for all $N\in\cm(R)$.
Moreover, we define $\ann \Ext^{>0}_R(M,\mod R)$ to be the intersection $\bigcap_{i>0} \ann \Ext^i_R(M,\mod R)$.
\item For an $R$-module $M$, let $P_1\xrightarrow[]{\varphi} P_0 \to M \to 0$ be a exact sequence with projective modules $P_1,P_0$.
The \textit{first syzygy} module $\syz M$ of $M$ is defined as the image of $\varphi$.
Also we define \textit{$n$-th syzygy} module $\syz^n M$ inductively as $\syz(\syz^{n-1}M)$.
The \textit{(Auslander) transpose} $\Tr M$ of $M$ is defined as the cokernel of the homomorphism
$\varphi^*\colon\Hom_R(P_0,R)\to\Hom_R(P_1,R)$.
Note that the isomorphism classes of $\syz^n M$ and $\Tr M$ up to projective direct summands are independent of the choise of $\varphi$.
Moreover $\syz$ gives an endofunctor $\lmod R\to \lmod R$ and $\Tr$ gives a duality of $\lmod R$ \cite{ABr}.
\end{enumerate}
\end{dfn}

The following lemma is shown by \cite[Lemma 2.14]{ua} and its proof.

\begin{lem}\label{2}
Let $R$ be a ring and $M$ an $R$-module.
Then there are equalities $\ann\lend_R(M)=\ann\Ext^1_R(M,\syz M)=\ann\Ext^{>0}_R(M,\mod R)$.
\end{lem}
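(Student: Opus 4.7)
The plan is to prove the two equalities by a circular chain of inclusions:
\[
\ann\lend_R(M)\ \subseteq\ \ann\Ext^{>0}_R(M,\mod R)\ \subseteq\ \ann\Ext^1_R(M,\syz M)\ \subseteq\ \ann\lend_R(M).
\]
The middle inclusion is immediate since $\syz M\in\mod R$ and we are restricting the bigger intersection to a single $\Ext$ group. So the work is in the outer two inclusions.

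For the last inclusion, I would apply $\Hom_R(M,-)$ to the canonical exact sequence $0\to\syz M\to P\to M\to 0$ (with $P$ projective) and read off the long exact sequence
\[
\Hom_R(M,P)\to\Hom_R(M,M)\xrightarrow{\partial}\Ext^1_R(M,\syz M).
\]
Since any $R$-homomorphism $M\to M$ that factors through a free module factors through the projective cover $P\to M$ (lift the free factor through the surjection $P\to M$), the kernel of $\partial$ is exactly the image of $\Hom_R(M,P)\to\Hom_R(M,M)$. Hence $\partial$ descends to an injection $\lend_R(M)\hookrightarrow\Ext^1_R(M,\syz M)$, and taking annihilators reverses the inclusion. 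This gives the third step.

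For the first (and main) inclusion, the key observation is the characterization
\[
\ann\lend_R(M)=\{a\in R\mid a\cdot\id_M\text{ factors through a projective module}\}.
\]
The inclusion $\subseteq$ is by definition (apply the condition to $\id_M$); for $\supseteq$, if $a\cdot\id_M=h\circ g$ with $g\colon M\to F$ and $h\colon F\to M$ where $F$ is free, then for any $f\in\Hom_R(M,M)$ one has $af=f\circ(a\cdot\id_M)=(f\circ h)\circ g$, which factors through $F$. Given this characterization, if $a\cdot\id_M=h\circ g$ as above, then for any $R$-module $N$ and any $i\ge 1$ the map $\Ext^i_R(a\cdot\id_M,N)$ factors through $\Ext^i_R(F,N)=0$; since $\Ext^i_R(a\cdot\id_M,N)$ is multiplication by $a$ on $\Ext^i_R(M,N)$, this yields $a\in\ann\Ext^{>0}_R(M,\mod R)$.

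The only conceptually subtle point is the clean identification of $\ann\lend_R(M)$ with the factorization condition on $a\cdot\id_M$; the remaining steps are standard manipulations of the Ext long exact sequence and of functoriality. I do not anticipate a serious obstacle — the argument essentially packages the folklore fact that the identity of a module ``detects'' the stable endomorphism ring as an ideal-theoretic invariant.
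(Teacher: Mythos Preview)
Your proof is correct. The paper does not actually prove this lemma but simply cites \cite[Lemma 2.14]{ua}; your argument is the standard one and is essentially what appears there. One cosmetic remark: you call $P\to M$ ``the projective cover,'' but $R$ is an arbitrary noetherian ring and projective covers need not exist---in the paper $\syz M$ is defined via an arbitrary projective presentation. This does not affect your argument, since the lifting of $h\colon F\to M$ through $P\to M$ only uses that $F$ is projective and $P\to M$ is surjective, and any map factoring through the chosen projective $P$ also factors through a free module (as $P$ is a summand of one).
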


The following theorem gives characterizations of trace ideals of canonical modules.
It includes the first equalities of Theorem \ref{thA}. 

\begin{thm}\label{m}
Let $(R,\m)$ be a $d$-dimensional Cohen--Macaulay local ring with canonical module $\omega$.
The following sets of elements of $R$ are all equal.
\begin{enumerate}[\rm(1)]
\item
$\tr\omega$,
\item
The set of elements $a\in R$ such that the map $R\xrightarrow{a}R$ factors through a direct sum of copies of $\omega$,
\item
The set of elements $a\in R$ such that the map $\omega\xrightarrow{a}\omega$ factors through a free module,
\item
$\ann\lend_R(\omega)=\ann\Tor_1^R(\Tr\omega,\omega)$,
\item
$\ann\lhom_R(\omega,\mod R)=\ann\Tor_1^R(\Tr\omega,\mod R)$,
\item
$\ann\lhom_R(\mod R,\omega)=\ann\Tor_1^R(\mod R,\omega)$,
\item
$\ann\Ext^1_R(\omega,\syz\omega)$,
\qquad {\rm (8)} $\ann\Ext^{>0}_R(\omega,\mod R)$,
\qquad {\rm (9)} $\ann\Ext^1_R(\cm(R),R)$,
\item[{\rm (10)}]
$\ann\Ext^{d+1}_R(\mod R,R)$,
\qquad {\rm (11)} $\ann\uend_R(R)$,
\qquad {\rm (12)} $\ann\uhom_R(R,\mod R)$,
\item[{\rm (13)}]
$\ann\uhom_R(\mod R,R)$.
\end{enumerate}
\end{thm}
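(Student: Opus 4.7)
The plan is to cluster the $13$ items into four groups, prove the equivalences within each, then link them via Lemma~\ref{2} and the canonical duality on $\cm R$.

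First I would handle the ``trace-of-$\omega$'' items $(1)$, $(2)$, $(11)$, $(12)$, $(13)$ by direct computation. The equality $(1)=(2)$ simply unpacks $\tr\omega=\sum\im(f\colon\omega\to R)$: writing $a=\sum f_i(y_i)$ is the data of a factorization $R\xrightarrow{(y_i)}\omega^n\xrightarrow{(f_i)}R$. For $(11)$ one obtains $\uend_R(R)\cong R/\tr\omega$; similarly $\uhom_R(R,N)$ is $N$ modulo $\sum_f\im(f\colon\omega\to N)$, and $\uhom_R(M,R)$ is $\Hom_R(M,R)$ modulo maps factoring through some $\omega^k$. Intersecting annihilators over $M$ or $N$ gives $\subseteq\tr\omega$ by specializing to $M=R$ or $N=R$, while $\supseteq$ uses the factorization of $a\cdot\id_R$ through $\omega^n$ from $(2)$ and pre- or post-composition with any $f$.

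Next, for items $(3)$--$(8)$, the pivotal step is $(2)=(3)$, which follows from the contravariant canonical duality $\Hom_R(-,\omega)\colon\cm R\to\cm R$: this swaps $R\leftrightarrow\omega$ and converts a factorization of $a\cdot\id_R$ through $\omega^n$ into one of $a\cdot\id_\omega$ through $R^n$. Since $\lend_R(\omega)$ is by definition $R$ modulo item $(3)$, we have $(3)=\ann\lend_R(\omega)$, and Lemma~\ref{2} applied to $M=\omega$ yields $(4)=(7)=(8)$. For $(5)$ and $(6)$, Auslander's classical formula $\lhom_R(M,N)\cong\Tor_1^R(\Tr M,N)$ provides the internal equality of each item. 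The containment $\tr\omega\subseteq\ann\lhom_R(\omega,\mod R)$ holds because $a\cdot\id_\omega$ factoring through a free module (item $(3)$) makes every map $\omega\to N$ stably zero after multiplication by $a$; the reverse containment specializes to $N=\omega$ and invokes $(4)$. The argument for $(6)$ is symmetric.

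For the Ext-into-$R$ items $(9)$ and $(10)$, the easy direction $(1)\subseteq(9)$ factors $R\xrightarrow{a}R$ through $\omega^n$ and applies $\Ext^1_R(X,-)$ for $X\in\cm R$, routing multiplication by $a$ through $\Ext^1_R(X,\omega^n)=0$ by the canonical module vanishing on MCM. The equality $(9)=(10)$ rests on the fact that every MCM module is (stably) a $d$-th syzygy of some module, obtained by iterating embeddings into free modules via the depth lemma; this makes $\{\Ext^1_R(X,R):X\in\cm R\}$ and $\{\Ext^{d+1}_R(M,R):M\in\mod R\}$ deliver the same annihilator. For the reverse $(9)\subseteq(1)$, the plan is to reduce to $\ann\Ext^1_R(\omega,\syz\omega)$, which equals $(1)$ via $(7)$: from $0\to\syz\omega\to F\to\omega\to 0$, the long exact sequence of $\Hom_R(\omega,-)$ presents $\Ext^1_R(\omega,\syz\omega)$ as an extension of a quotient of $\End_R(\omega)=R$ (from the connecting map) by $\Ext^1_R(\omega,F)=\Ext^1_R(\omega,R)^r$, which is annihilated by $a$ by hypothesis applied to $X=\omega$.

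The main obstacle will be this final reduction, because the connecting-map contribution to $\Ext^1_R(\omega,\syz\omega)$ carries a nontrivial $R$-action and a naive argument only yields $a^2$-annihilation. To overcome it I expect to exploit Auslander--Buchweitz MCM approximations applied to the cokernel of an embedding $\omega\hookrightarrow R^N$, together with the structural fact that finitely generated modules of finite injective dimension over a CM local ring with canonical module admit resolutions by sums of $\omega$; this should allow all relevant Ext computations to be controlled by annihilation data on MCM modules mapping into $R$ only.
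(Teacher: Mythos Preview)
Your treatment of items (1)--(8) and (11)--(13) is essentially the same as the paper's and is fine. The problems lie in the final block, where two of your claimed reductions do not go through.

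\textbf{First gap: $(9)=(10)$.} You assert that ``every MCM module is (stably) a $d$-th syzygy, obtained by iterating embeddings into free modules.'' This is false in general. An MCM module $X$ embeds into a free module iff $X$ is torsionless, i.e.\ iff $\Ext^1_R(\Tr X,R)=0$; for $X=\omega$ this is equivalent to $R$ being generically Gorenstein (cf.\ Lemma~\ref{l214} with $n=1$). Over $R=k[[x,y,z]]/(x,y)^2$, for instance, $\omega$ is MCM but not torsionless, hence not a first syzygy. So the sets $\{\Ext^1_R(X,R):X\in\cm(R)\}$ and $\{\Ext^{d+1}_R(M,R):M\in\mod R\}$ genuinely differ, and your argument collapses. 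The paper establishes $(10)\subseteq(9)$ instead by induction on $d$: for $a\in(10)$, an $R$-regular $x$, and $X\in\cm(R)$, one passes to $R/(x)$, invokes the induction hypothesis to kill $\Ext^2_R(X/xX,R)$ by $a$, and then uses the long exact sequence of $0\to X\xrightarrow{x}X\to X/xX\to 0$ together with Krull's intersection theorem to conclude $a\Ext^1_R(X,R)=0$.

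\textbf{Second gap: $(9)\subseteq(7)$.} You correctly diagnose that the long exact sequence coming from $0\to\syz\omega\to F\to\omega\to0$ only yields $a^2$-annihilation of $\Ext^1_R(\omega,\syz\omega)$: it gives a short exact sequence $0\to R/\tr\omega\to\Ext^1_R(\omega,\syz\omega)\to\Ext^1_R(\omega,R)^r\to0$, so $a\in(9)$ pushes $a\cdot\Ext^1_R(\omega,\syz\omega)$ into the image of $R/\tr\omega$, and you are stuck. Your proposed rescue via Auslander--Buchweitz approximations of ``the cokernel of an embedding $\omega\hookrightarrow R^N$'' presupposes exactly the embedding that need not exist (same obstruction as above). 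The paper's device here is a single duality isomorphism
\[
\Ext^1_R(M,R)\;\cong\;\Ext^1_R(\omega,M^\dag)\qquad\text{for all }M\in\cm(R),
\]
which identifies $\ann\Ext^1_R(\cm(R),R)$ with $\ann\Ext^1_R(\omega,\cm(R))$; since $\syz\omega\in\cm(R)$, one gets $(9)\subseteq(7)$ in one line. This isomorphism (a consequence of the $\omega$-duality on $\cm(R)$ and $\End_R(\omega)\cong R$) is the missing ingredient in your plan, and once you have it both gaps close simultaneously: it also gives $(8)\subseteq(9)$ without needing the syzygy claim, so that only the inductive argument above is required for $(10)\subseteq(9)$.
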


\begin{proof}
The equalities in (4), (5) and (6) follow from \cite[Lemma 3.9]{Y} and the fact that $\Tr$ gives a duality of $\lmod R$.
We set $(-)^\dag=\Hom_R(-,\omega)$.

$(1)=(2)$:
See \cite[Lemma 11.41]{LW} for instance.

$(2)=(3)$:
Application of the functor $(-)^\dag$ shows this; note that $\Hom_R(\omega,\omega)\cong R$.

$(3)=(4)=(5)=(6)$:
Note that for $R$-modules $M,N$ we have $\lhom_R(M,N)=0$ if either $M$ or $N$ is free.
Let $a\in R$.
If the map $\omega\xrightarrow{a}\omega$ factors through a free module, $\lhom_R(M,\omega)\xrightarrow{a}\lhom_R(M,\omega)$ and $\lhom_R(\omega,M)\xrightarrow{a}\lhom_R(\omega,M)$ are zero maps for all $R$-modules $M$.
On the other hand, if $a$ annihilates $\lend_R(\omega)$, then $a\cdot\id_{\omega}=(\omega\xrightarrow{a}\omega)$ factors through a free module.

$(2)=(11)=(12)=(13)$:
These equalities follow from the dual argument to the proof of the equalities $(3)=(4)=(5)=(6)$.

$(4)=(7)=(8)$:
Lemma \ref{2} show this.

$(8)\subseteq(9)\subseteq(7)$:
We have
\begin{align*}
&\ann\Ext^{>0}_R(\omega,\mod R)\subseteq\ann\Ext^1_R(\omega,\cm(R))=\ann\Ext^1_R(\cm(R),R),\\
&\ann\Ext^1_R(\omega,\cm(R))\subseteq\ann\Ext^1_R(\omega,\syz\omega).
\end{align*}
Here the equality follows from the isomorphism
\begin{equation}\label{e}
\Ext^1_R(M,R)\cong\Ext^1_R(\omega,M^\dag)
\end{equation}
for each $M\in\cm(R)$, which is shown by \cite[(A.4.22)]{C}, and the fact that $(-)^\dag$ gives a duality of $\cm(R)$.
The second inclusion holds since $\syz\omega$ is in $\cm(R)$.

$(9)\subseteq(10)$:
There is an isomorphism $\Ext^{d+1}_R(M,R)\cong\Ext^1_R(\syz^dM,R)$, and $\syz^dM$ belongs to $\cm(R)$.

$(10)\subseteq(9)$:
Pick an element $a\in\ann\Ext^{d+1}_R(\mod R,R)$.
What we need to show is that $a\Ext^1_R(M,R)=0$ for all $M\in\cm(R)$.
Let us do this by induction on $d$.
When $d=0$, there is nothing to prove.
Assume $d>0$, and let $x\in\m$ be any $R$-regular element.
Then the exact sequence
$$
0 \to M \xrightarrow{x} M\to M/xM \to 0
$$
induces a commutative diagram
$$
\begin{CD}
\Ext_R^1(M/xM,R) @>>> \Ext_R^1(M,R) @>x>> \Ext_R^1(M,R) @>\delta>> \Ext_R^2(M/xM,R)\\
@. @VaVV @VaVV @VaVV\\
@. \Ext_R^1(M,R) @>x>> \Ext_R^1(M,R) @>\delta>> \Ext_R^2(M/xM,R)
\end{CD}
$$
with exact rows.
Since $a\Ext_R^{d+1}(\mod R,R)=0$, we have $a\Ext_R^{d+1}(\mod R/(x),R)=0$, and hence $a\Ext_{R/(x)}^d(\mod R/(x),R/(x))=0$ by \cite[Lemma 3.1.16]{BH}.
The induction hypothesis implies that the multiplication map
$$
\Ext_{R/(x)}^1(M/xM,R/(x))\xrightarrow{a}\Ext_{R/(x)}^1(M/xM,R/(x))
$$
is zero.
This map can be identified with the multiplication map
$$
\Ext_R^2(M/xM,R)\xrightarrow{a}\Ext_R^2(M/xM,R),
$$
and therefore the right-end vertical map in the above diagram is zero.
This implies that for each element $z\in\Ext_R^1(M,R)$ we have $\delta(az)=0$, and $az\in\ker\delta=x\Ext_R^1(M,R)$.
Replacing $x$ with its any power $x^i$, we have $az\in x^i\Ext_R^1(M,R)$ for all $i>0$.
Thus $az\in\bigcap_{i>0}x^i\Ext_R^1(M,R)=0$ by Krull's intersection theorem.
It follows that $a\Ext_R^1(M,R)=0$, which is what we want.
\end{proof}

As a corollary, we get the following inclusion of ideals.
We use this result in Section 4.

\begin{cor} \label{c14}
Let $R$ be a Cohen--Macaulay local ring with canonical module $\omega$.
Then $\tr\omega$ is contained in $\ann \Ext^i_R(\omega,R)$ for all $i>0$.
\end{cor}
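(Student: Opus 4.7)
The plan is to read off the corollary directly from Theorem \ref{m}. Concretely, item (8) of that theorem identifies $\tr\omega$ with $\ann\Ext^{>0}_R(\omega,\mod R)$, which by Definition \ref{d1}(5) equals $\bigcap_{i>0}\ann\Ext^i_R(\omega,\mod R)$. The ideal $\ann\Ext^i_R(\omega,\mod R)$ is by definition the set of elements of $R$ annihilating $\Ext^i_R(\omega,N)$ for every $R$-module $N$, so in particular it is contained in $\ann\Ext^i_R(\omega,R)$ for every $i>0$.

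Chaining these two facts gives, for each $i>0$,
\[
\tr\omega \;=\; \ann\Ext^{>0}_R(\omega,\mod R) \;\subseteq\; \ann\Ext^i_R(\omega,\mod R) \;\subseteq\; \ann\Ext^i_R(\omega,R),
\]
which is the desired inclusion. So no genuine obstacle arises; the content has already been packaged into Theorem \ref{m}, and the corollary is simply the special case $N=R$ of the characterization (1)$=$(8). The only thing to verify is that the conventions in Definition \ref{d1}(5) indeed allow this specialization, which is immediate.
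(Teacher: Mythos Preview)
Your proof is correct and follows exactly the same approach as the paper, which simply cites the equality (1)$=$(8) of Theorem \ref{m}; you have merely unpacked that one-line reference into the obvious chain of inclusions.
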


\begin{proof}
The assertion follows by Theorem \ref{m} (1) $=$ (8).
\end{proof}

The following notions are introduced in \cite{HHS} and \cite{SV}.

\begin{dfn}[Herzog--Hibi--Stamate]
Let $(R,\m)$ be a Cohen--Macaulay local ring with canonical module $\omega$.
Then $R$ is called {\em nearly Gorenstein} if $\tr\omega$ contains $\m$.
\end{dfn}

\begin{dfn}[Striuli--Vraciu]
Let $(R,\m)$ be an artinian local ring.
Then $R$ is called {\em almost Gorenstein} if $0:(0:I)\subseteq I:\m$ for all ideals $I$ of $R$.
\end{dfn}

Note that, for an artinian ring $R$, it is shown in \cite[Proposition 1.1]{HV} that $R$ is almost Gorenstein if it is nearly Gorenstein.

Theorem \ref{m} immediately yields the following.

\begin{cor}\label{5}
Let $(R,\m)$ be a Cohen--Macaulay local ring with canonical module $\omega$.
Then $R$ is nearly Gorenstein if and only if $\m\Ext^1(\cm(R),R)=0$.
\end{cor}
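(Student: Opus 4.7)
The plan is to reduce this statement immediately to Theorem \ref{m} together with an unpacking of the definitions. By definition, $R$ is nearly Gorenstein precisely when $\m\subseteq\tr\omega$, so the work is to translate $\m\subseteq\tr\omega$ into the annihilation condition $\m\cdot\Ext^1_R(\cm(R),R)=0$.

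The key input is the equality $\tr\omega=\ann\Ext^1_R(\cm(R),R)$ established as the equivalence of items (1) and (9) in Theorem \ref{m}. Once this identification is in hand, $\m\subseteq\tr\omega$ becomes $\m\subseteq\ann\Ext^1_R(\cm(R),R)$. Now, recalling Definition \ref{d1}(5), the ideal $\ann\Ext^1_R(\cm(R),R)$ consists of exactly those $a\in R$ satisfying $a\cdot\Ext^1_R(N,R)=0$ for every maximal Cohen--Macaulay module $N$. Hence the containment $\m\subseteq\ann\Ext^1_R(\cm(R),R)$ is, by definition, the same as the statement that $\m$ annihilates $\Ext^1_R(N,R)$ for every $N\in\cm(R)$, i.e.\ $\m\Ext^1_R(\cm(R),R)=0$.

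There is essentially no genuine obstacle here: the corollary is just a direct substitution of Theorem \ref{m} into the definition of nearly Gorenstein, followed by unraveling the notation for $\ann\Ext^1_R(\cm(R),R)$. The entire proof can therefore be written in a single line referring to Theorem \ref{m}, which is presumably what the authors do.
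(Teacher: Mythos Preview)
Your proposal is correct and matches the paper's approach exactly: the paper simply states that Theorem \ref{m} immediately yields the corollary, and your unpacking via the equality $(1)=(9)$ of Theorem \ref{m} together with the definition of nearly Gorenstein is precisely what is intended.
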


The following result is given in \cite[Proposition 2.2]{SV}, which follows from combining our Corollary \ref{5} with \cite[Proposition 2.3(b)]{HHS}.

\begin{cor}[Striuli--Vraciu]
Let $(R,\m)$ be a Cohen--Macaulay local ring with canonical module.
If $\m\Ext^1(\cm(R),R)=0$, then $R/Q$ is almost Gorenstein for all parameter ideals $Q$ of $R$.
\end{cor}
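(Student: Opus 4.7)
The plan is to combine the three ingredients already laid out in the paper, with essentially no extra work. First, I would translate the hypothesis into an intrinsic property of $R$: by Corollary \ref{5}, the condition $\m\Ext^1_R(\cm(R),R)=0$ is precisely equivalent to $R$ being nearly Gorenstein. So the task reduces to showing that for a nearly Gorenstein Cohen--Macaulay local ring $(R,\m)$ and any parameter ideal $Q$, the quotient $R/Q$ is almost Gorenstein.

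Next, I would invoke the descent result of Herzog--Hibi--Stamate \cite[Proposition 2.3(b)]{HHS} (which is the implication $(1)\Rightarrow(2)$ of Theorem \ref{thB} already stated in the introduction): if $R$ is nearly Gorenstein and $\underline{x}=x_1,\ldots,x_t$ is an $R$-regular sequence, then $R/(\underline{x})$ is again nearly Gorenstein. Since $R$ is Cohen--Macaulay, any parameter ideal $Q$ is generated by a (maximal) $R$-regular system of parameters, so this applies to give that $R/Q$ is nearly Gorenstein.

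Finally, $R/Q$ is artinian because $Q$ is a parameter ideal. As noted in the paper just before Corollary \ref{5} (citing \cite[Proposition 1.1]{HV}), any nearly Gorenstein artinian local ring is almost Gorenstein in the sense of Striuli--Vraciu. Applying this to $R/Q$ yields the desired conclusion.

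The proof is almost entirely a chain of citations, so there is no genuine obstacle; the only point requiring a brief sanity check is that parameter ideals in a Cohen--Macaulay local ring are generated by regular sequences, which is standard (see \cite[Theorem 2.1.2]{BH}). Consequently \cite[Proposition 2.3(b)]{HHS} is applicable, and the whole argument amounts to the composition Corollary \ref{5} $\Rightarrow$ \cite[Proposition 2.3(b)]{HHS} $\Rightarrow$ \cite[Proposition 1.1]{HV}.
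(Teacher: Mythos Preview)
Your proposal is correct and matches the paper's own argument exactly: the paper states that the corollary ``follows from combining our Corollary~\ref{5} with \cite[Proposition 2.3(b)]{HHS}'', implicitly using the already-noted implication \cite[Proposition 1.1]{HV} for the final step. You have simply made each link in this chain explicit.
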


\begin{rem}
In \cite[Proposition 2.2]{SV} the existence of a canonical module is not assumed.
\end{rem}

Now we aim to show the second equality of Theorem \ref{thA} with an assumption that the ring is Gorenstein on the punctured spectrum.
We prepare the following lemma.

\begin{lem}\label{4}
Let $(R,\m)$ be a $d$-dimensional Cohen--Macaulay local ring with canonical module $\omega_R$.
Let $M$ be an $R$-module of finite length.
Then $\ann_RM=\ann_R\Ext_R^d(M,\omega_R)$.
\end{lem}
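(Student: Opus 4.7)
The plan is to identify $\Ext_R^d(M,\omega_R)$ with the Matlis dual of $M$ and then exploit the reflexivity of Matlis duality on the subcategory of finite length modules.

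First I would observe the easy inclusion $\ann_R M \subseteq \ann_R \Ext_R^d(M,\omega_R)$. Indeed, for $a\in R$ with $aM=0$, the map $a\cdot\id_M$ is the zero endomorphism of $M$, and applying the functor $\Ext_R^d(-,\omega_R)$ shows that multiplication by $a$ kills $\Ext_R^d(M,\omega_R)$.

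For the reverse inclusion, I would invoke local duality. Since $M$ has finite length, the local cohomology vanishes in positive degrees and $H_{\m}^0(M)=M$; combined with the assumption that $\omega_R$ is a canonical module over a $d$-dimensional Cohen--Macaulay local ring, local duality produces a natural isomorphism
\[
\Ext_R^d(M,\omega_R)\;\cong\;\Hom_R(M,E_R(R/\m)) \;=:\; M^{\vee}.
\]
If one prefers to avoid completeness assumptions on $R$, one may pass to the $\m$-adic completion $\widehat{R}$: a finite length module is canonically a module over $\widehat{R}$ with the same underlying abelian group and the same action of $R$, and both the $\Ext$ above and the annihilator are insensitive to this passage, so the classical local duality over $\widehat{R}$ transfers directly.

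Finally, I would apply Matlis duality, which is an exact contravariant equivalence on the category of finite length modules with $M^{\vee\vee}\cong M$ canonically. Hence if $a\in R$ satisfies $aM^{\vee}=0$, the same argument as in the first paragraph gives $aM^{\vee\vee}=0$, so $aM=0$. Combined with the isomorphism of the previous paragraph, this yields $\ann_R\Ext_R^d(M,\omega_R)\subseteq \ann_R M$, completing the equality. The main point requiring care is the local duality isomorphism itself; once that is in hand, the annihilator statement is formal from the reflexivity of Matlis duality on artinian modules.
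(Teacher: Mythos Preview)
Your proof is correct and follows essentially the same approach as the paper: both establish the easy inclusion directly, then identify $\Ext_R^d(M,\omega_R)$ with the Matlis dual of $M$ via local duality (passing to the completion to justify this), and conclude using the reflexivity of Matlis duality on finite length modules. The paper carries out the completion step explicitly rather than treating it as optional, but the argument is the same.
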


\begin{proof}
Clearly, $\ann_RM$ is contained in $\ann_R\Ext_R^d(M,\omega_R)$.
Put $N=\Ext_R^d(M,\omega_R)$ and take an element $a\in\ann_RN$.
There are isomorphisms
$$
\widehat N\cong\Ext_{\widehat R}^d(\widehat M,\omega_{\widehat R})\cong(\h_{\widehat\m}^0(\widehat M))^\vee=(\widehat M)^\vee,
$$
where $\widehat{()}$ denotes the $\m$-adic completion, $(-)^\vee$ the Matlis dual, the second isomorphism follows from the local duality theorem, and the equality comes from the fact that $\widehat M$ has finite length.
We have $a\widehat N=0$, hence $a(\widehat M)^\vee=0$, and therefore $aM=0$.
\end{proof}

The following proposition completes the Theorem \ref{thA}.

\begin{prop}\label{1}
Let $(R,\m)$ be a $d$-dimensional Cohen--Macaulay local ring with canonical module $\omega$.
Suppose that $R$ is locally Gorenstein on the punctured spectrum $\spec R\setminus\{\m\}$.
Then $\tr\omega=\ann\Ext^{d+1}_R(\Tr\omega,R)$.
\end{prop}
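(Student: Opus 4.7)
The plan is to establish the two inclusions separately.

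The inclusion $\tr\omega \subseteq \ann\Ext^{d+1}_R(\Tr\omega,R)$ is immediate from Theorem~\ref{m}(10): that result gives $\tr\omega = \ann\Ext^{d+1}_R(\mod R, R)$, so specializing to $M=\Tr\omega$ yields the containment without using the Gorenstein-on-punctured-spectrum hypothesis.

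For the reverse inclusion $\ann\Ext^{d+1}_R(\Tr\omega,R)\subseteq\tr\omega$, the hypothesis is used as follows. Since $\omega_\p\cong R_\p$ for every $\p\in\spec R\setminus\{\m\}$, any projective presentation of $\omega$ trivializes at such $\p$, so $(\Tr\omega)_\p$ is stably free and thus $\Ext^{d+1}_R(\Tr\omega,R)_\p=0$. Consequently $\Ext^{d+1}_R(\Tr\omega,R)$ has finite length. Lemma~\ref{4} then produces the equality
\[
\ann\Ext^{d+1}_R(\Tr\omega,R)=\ann\Ext^d_R(\Ext^{d+1}_R(\Tr\omega,R),\omega).
\]
The idea is now to rewrite $\Ext^{d+1}_R(\Tr\omega,R)\cong\Ext^1_R(\syz^d\Tr\omega,R)$ via a dimension shift, note that $\syz^d\Tr\omega$ lies in $\cm(R)$, and apply the isomorphism \eqref{e} to obtain $\Ext^1_R(\omega,(\syz^d\Tr\omega)^\dag)$. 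Combining this identification with local duality on the finite-length module above, and using the characterization $\tr\omega=\ann\Ext^1_R(\omega,\syz\omega)$ from Theorem~\ref{m}(7), one can convert the chain of annihilator inclusions into an equality with $\tr\omega$.

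The main obstacle is the final identification. One must carefully propagate $(-)^\dag$ through the syzygy functor in the stable category of maximal Cohen--Macaulay modules, using the four-term exact sequence $0\to\omega^*\to P_0^*\to P_1^*\to\Tr\omega\to 0$ coming from dualizing a projective presentation of $\omega$; since $(-)^\dag$ and $\syz$ only commute up to free and $\omega$-summands, the argument must be carried out modulo such summands (where the annihilator is preserved). The essential role of the Gorenstein-on-punctured-spectrum hypothesis is exactly to make the Ext modules in question finite length, so that Matlis duality turns annihilator comparisons into honest equalities and the many characterizations of $\tr\omega$ assembled in Theorem~\ref{m} can be applied.
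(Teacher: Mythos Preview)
Your first inclusion is fine, and your reduction $\Ext^{d+1}_R(\Tr\omega,R)\cong\Ext^1_R(\omega,(\syz^d\Tr\omega)^\dag)$ via dimension shift and \eqref{e} matches the paper exactly. The gap is in what follows.

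First, your invocation of Lemma~\ref{4} is vacuous as written. Applying it with $M=\Ext^{d+1}_R(\Tr\omega,R)$ just says that a finite-length module and its Matlis dual share an annihilator; the displayed equality
\[
\ann\Ext^{d+1}_R(\Tr\omega,R)=\ann\Ext^d_R\bigl(\Ext^{d+1}_R(\Tr\omega,R),\omega\bigr)
\]
is a tautology and gains you nothing. The paper applies Lemma~\ref{4} to a \emph{different} finite-length module, namely $\lend_R(\omega)$, and this is where the content lies.

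Second, the ``final identification'' you describe is the wrong target. You propose to show $(\syz^d\Tr\omega)^\dag$ agrees with $\syz\omega$ up to free and $\omega$-summands so as to invoke Theorem~\ref{m}(7). This is neither established nor, as far as I can see, true in general; your sketch (``propagate $(-)^\dag$ through $\syz$'') is not an argument. What the paper actually uses is Auslander--Reiten duality in the form of \cite[Lemma~3.10]{Y}, which gives a genuine isomorphism
\[
\Ext^1_R\bigl(\omega,(\syz^d\Tr\omega)^\dag\bigr)\;\cong\;\Ext^d_R\bigl(\lend_R(\omega),\omega\bigr).
\]
Now Lemma~\ref{4}, applied with $M=\lend_R(\omega)$ (finite length because $\omega$ is locally free on the punctured spectrum), yields $\ann\Ext^d_R(\lend_R(\omega),\omega)=\ann\lend_R(\omega)$, and Theorem~\ref{m}(4) identifies the latter with $\tr\omega$. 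So the missing ingredient is precisely the AR-duality isomorphism, not a stable comparison of $(\syz^d\Tr\omega)^\dag$ with $\syz\omega$.
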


\begin{proof}
In our setting, $\omega$ is locally free on the punctured spectrum.
In particular, $\lend_R(\omega)=\Tor^R_1(\Tr\omega,\omega)$ has finite length as an $R$-module.
There are isomorphisms
$$
\Ext^{d+1}_R(\Tr\omega,R)\cong\Ext^1_R(\syz^d\Tr\omega,R)\cong\Ext^1_R(\omega,(\syz^d\Tr\omega)^\dag)\cong\Ext^d_R(\lend_R(\omega),\omega),
$$
where $(-)^\dag=\Hom_R(-,\omega)$, and the second and third isomorphisms follow from \eqref{e} and \cite[Lemma 3.10]{Y}, respectively.
Lemma \ref{4} and Theorem \ref{m} yield equalities $\ann\Ext^{d+1}_R(\Tr\omega,R)=\ann\lend_R(\omega)=\tr\omega$.
\end{proof}

In the rest of this section, we deal with the case that $R$ is not necessarily Gorenstein on the punctured spectrum.
First we put the following lemma.

\begin{lem} \label{l112}
Let $R$ be a ring and $M$ an $R$-module.
Then an inclusion $\ann \lend_R(M) \subseteq \ann\Ext^1_R(\Tr M,R)$ holds.
\end{lem}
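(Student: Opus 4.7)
The plan is to identify $\Ext^1_R(\Tr M, R)$ with a concrete submodule of $M$, namely the kernel of the biduality map, via the Auslander--Bridger exact sequence
$$
0 \to \Ext^1_R(\Tr M, R) \to M \xrightarrow{\sigma_M} M^{**} \to \Ext^2_R(\Tr M, R) \to 0,
$$
where $\sigma_M$ is the natural evaluation map and $(-)^* = \Hom_R(-,R)$. Once this identification is in place, annihilators of the left-hand Ext module become annihilators of $\ker\sigma_M \subseteq M$, which is a much more hands-on object to control.

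Next I would unpack the hypothesis $a \in \ann\lend_R(M)$ by the defining factorization property: the endomorphism $a\cdot\id_M$ factors as $M \xrightarrow{f} F \xrightarrow{g} M$ for some free module $F$. The key is that $\sigma_F$ is an isomorphism, so by naturality of $\sigma$ we have the commutative diagram
$$
\begin{CD}
M @>f>> F @>g>> M \\
@V\sigma_M VV @V\sigma_F VV @V\sigma_M VV \\
M^{**} @>f^{**}>> F^{**} @>g^{**}>> M^{**}
\end{CD}
$$
in which the top (resp. bottom) composition is $a\cdot\id_M$ (resp. $a\cdot\id_{M^{**}}$).

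To conclude, take any $m \in \ker\sigma_M$. Then $f^{**}(\sigma_M(m)) = 0$, but $f^{**}(\sigma_M(m)) = \sigma_F(f(m))$, and since $\sigma_F$ is injective (indeed, an isomorphism) we deduce $f(m) = 0$. Therefore $am = g(f(m)) = 0$, showing $a\cdot\ker\sigma_M = 0$, which by the identification above gives $a \in \ann\Ext^1_R(\Tr M, R)$.

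The main potential obstacle is just pinning down the Auslander--Bridger sequence with the correct signs/maps, but this is standard (see \cite{ABr}), and the rest of the argument is a clean naturality-of-evaluation diagram chase; no delicate computations are required.
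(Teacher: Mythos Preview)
Your proof is correct and follows essentially the same approach as the paper: both identify $\Ext^1_R(\Tr M,R)$ with $\ker\sigma_M$ (the paper cites \cite[Proposition 12.5]{LW} and further describes it as $\bigcap_{f\in M^*}\ker f$), then use that $a\cdot\id_M$ factors through a free module to conclude $a$ kills this kernel. The only cosmetic difference is that the paper uses the description $\ker\sigma_M=\bigcap_{f\in M^*}\ker f$ to see directly that the map $M\to F$ kills $\ker\sigma_M$, whereas you reach the same conclusion via naturality of $\sigma$ and injectivity of $\sigma_F$.
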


\begin{proof}
We set $(-)^*=\Hom_R(-,R)$.
The $R$-module $\Ext^1_R(\Tr M,R)$ is isomorphic to the kernel $K$ of the homomorphism $M \to M^{**}$ which is given by $x\mapsto [f\mapsto f(x)]$ for $x\in M$ and $f\in M^*$; See \cite[Proposition 12.5]{LW} for instance.
Note that $K$ coincides with the submodule $\bigcap_{f\in M^*} \ker f$ of $M$.
Let $x\in \ann \lend_R(M)$.
Then the multiplication map $x\colon M \to M$ factors through a free module $F$.
By the identification above, the image of $K$ in $F$ vanishes.
This yields that $xK=0$.
Therefore $\ann\lend_R(M)\subseteq \ann K =\ann \Ext^1_R(\Tr M,R)$.
\end{proof}

Recall that for an integer $n\ge 1$, an $R$-module $M$ is called \textit{$n$-torsionfree} \cite{ABr} if $\Ext^i_R(\Tr M,R)=0$ for all $i=1,\dots, n$.

\begin{lem} \label{l213}
Let $R$ be a ring and $M$ an $R$-module.
Assume $M$ is $n$-torsionfree.
Then an inclusion $\ann \lend_R(M) \subseteq \ann\Ext^{n+1}_R(\Tr M,R)$ holds.
\end{lem}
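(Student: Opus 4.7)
The plan is to mirror the argument of Lemma \ref{l112}, but pass it through the transpose functor $\Tr$. The key leverage, already noted in Definition \ref{d1}(6), is that $\Tr$ defines a contravariant functor on the stable category $\lmod R$, and in particular preserves null morphisms there.

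First I would unpack the hypothesis: if $a \in \ann \lend_R(M)$, then in particular $a \cdot \id_M$ factors through a free module, say $M \xrightarrow{\alpha} F \xrightarrow{\beta} M$ with $\beta\alpha = a \cdot \id_M$. Equivalently, the class of $a \cdot \id_M$ in $\lend_R(M)$ is zero. The central step is to apply $\Tr$: because $\Tr$ is functorial on $\lmod R$ and satisfies $\Tr(a \cdot \id_M) = a \cdot \id_{\Tr M}$ (by lifting $a\cdot\id_M$ to the multiplication-by-$a$ map on each term of a free presentation of $M$ and dualizing), the class of $a \cdot \id_{\Tr M}$ in $\lend_R(\Tr M)$ is zero. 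By the definition of the stable category, this forces $a \cdot \id_{\Tr M}$ to factor through a projective module in $\mod R$, say $\Tr M \xrightarrow{u} Q \xrightarrow{v} \Tr M$ with $vu = a \cdot \id_{\Tr M}$ and $Q$ projective.

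Finally, applying the contravariant functor $\Ext^i_R(-, R)$ to this factorization shows that multiplication by $a$ on $\Ext^i_R(\Tr M, R)$ factors through $\Ext^i_R(Q, R)$, which vanishes for every $i \geq 1$ since $Q$ is projective. Specializing to $i = n+1$ gives $a \in \ann\Ext^{n+1}_R(\Tr M, R)$, as desired.

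The main obstacle is the bookkeeping at the module-category/stable-category interface in the second step: one must check that $\Tr$, which a priori is only functorial on $\lmod R$, really converts a factorization through a free module in $\mod R$ into an actual factorization through a projective module in $\mod R$ (not merely in the quotient category). This is a routine unpacking of the definition of morphisms in $\lmod R$ as cosets modulo maps through projectives. As a byproduct, this approach proves the stronger inclusion $\ann \lend_R(M) \subseteq \ann \Ext^i_R(\Tr M, R)$ for every $i \geq 1$, with no torsion-freeness needed; the hypothesis that $M$ is $n$-torsionfree simply guarantees the vanishing $\Ext^i_R(\Tr M, R) = 0$ for $i \leq n$, so that $i = n+1$ is the first index at which the conclusion carries new information.
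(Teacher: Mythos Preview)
Your argument is correct and takes a genuinely different, more elementary route than the paper. The paper reduces to Lemma~\ref{l112} by invoking Iyama's equivalence $\Tr\syz^n\Tr\colon \underline{\mathcal{F}}_n \xrightarrow{\sim} \underline{\mathcal{X}}_n$: setting $N=\Tr\syz^n\Tr M$, the $n$-torsionfreeness of $M$ places $M$ in $\mathcal{F}_n$, whence $\lend_R(M)\cong\lend_R(N)$, and then $\Ext^{n+1}_R(\Tr M,R)\cong\Ext^1_R(\Tr N,R)$ allows Lemma~\ref{l112} to be applied to $N$. You bypass this machinery entirely by using only that $\Tr$ is a duality on $\lmod R$ (already recorded in Definition~\ref{d1}(6)): since $\Tr(a\cdot\id_M)=a\cdot\id_{\Tr M}$ in $\lend_R(\Tr M)$, the vanishing of $a\cdot\id_M$ in $\lend_R(M)$ forces $a\cdot\id_{\Tr M}$ to factor through a projective, and hence $a$ kills $\Ext^i_R(\Tr M,R)$ for every $i\ge 1$. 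Your approach is shorter, avoids the external reference to \cite{I}, and (as you note) yields the unconditional inclusion $\ann\lend_R(M)\subseteq\ann\Ext^i_R(\Tr M,R)$ for all $i\ge 1$; the $n$-torsionfree hypothesis becomes purely cosmetic. The paper's route, by contrast, situates the lemma within the framework of higher Auslander--Reiten theory and makes explicit the role of the functor $\Tr\syz^n\Tr$, which may be of independent interest.
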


\begin{proof}
Let $\mathcal{X}_n$ be the subcategory $\{X\in \mod R\mid \Ext^i_R(X,R)=0\text{ for } i=1,\dots,n\}$ of $\mod R$, $\mathcal{F}_n$ be the subcategory $\{X\in\mod R\mid \Tr X\in \mathcal{F}_n\}$ of $\mod R$, and $\underline{\mathcal{X}}_n,\underline{\mathcal{F}}_n$ be corresponding subcategories of $\lmod R$.
It follows from \cite[Prop. 1.1.1]{I} that the functor $\Tr \syz^n \Tr$ gives an equivalence $\underline{\mathcal{F}}_n\cong \underline{\mathcal{X}}_n$.
In particular, for any $R$-module $X\in \mathcal{F}_n$, one has an isomorphism $\lend_R(X)\cong \lend_R(\Tr \syz^n \Tr X)$ of $R$-modules.

Set $N:=\Tr \syz^n \Tr M$.
Since $M$ is $n$-torsionfree, $X$ belongs to $\mathcal{F}_n$, and hence $\lend_R(N)\cong \lend_R(M)$.
The calculations
$$\Ext^{n+1}_R(\Tr M,R)\cong\Ext^1_R(\syz^n \Tr M,R)\cong\Ext^1_R(\Tr(\Tr\syz^n \Tr M),R)=\Ext^1_R(\Tr N,R)$$
and Lemma \ref{l112} show that $\ann\lend_R(N) \subseteq \ann\Ext^{n+1}_R(\Tr M,R)$.
Thus we conclude that $\ann \lend_R(M) \subseteq \ann\Ext^{n+1}_R(\Tr M,R)$.
\end{proof}

The $n$-torsionfreeness of a canonical module can be seen as follows.
This is well-known to experts.
For the convenience of the reader, we include a proof.

\begin{lem} \label{l214}
Let $R$ be a $d$-dimensional Cohen--Macaulay local ring with a canonical module $\omega$.
Then $\omega$ is $d+1$-torsionfree if and only if $R$ is Gorenstein.
Consequently, for $n\ge 1$, $\omega$ is $n$-torsionfree if and only if $R_\p$ is Gorenstein for all prime ideal $\p$ of $R$ with $\height \p\le n-1$.
\end{lem}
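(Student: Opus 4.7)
The plan is to first establish the main equivalence, that $\omega$ is $(d+1)$-torsionfree iff $R$ is Gorenstein, and then deduce the general $n$ case from it by localization.

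The direction $R$ Gorenstein $\Rightarrow$ $\omega$ is $(d+1)$-torsionfree is trivial, since then $\omega \cong R$ makes $\Tr\omega$ zero in the stable category, killing every $\Ext^i_R(\Tr\omega, R)$. For the converse, assuming $\omega$ is $(d+1)$-torsionfree but $R$ is not Gorenstein, I would localize at a prime $\p$ of minimal height $h$ with $R_\p$ non-Gorenstein. By this minimality, $R_\p$ is Gorenstein on its punctured spectrum, so Proposition \ref{1} applies to $R_\p$. Since $\omega_\p$ is $(d+1)$-torsionfree and $h + 1 \le d + 1$, it is also $(h+1)$-torsionfree, giving $\Ext^{h+1}_{R_\p}(\Tr\omega_\p, R_\p) = 0$, whence $\tr_{R_\p}(\omega_\p) = R_\p$ by Proposition \ref{1}. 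This full-trace condition produces a surjection $\omega_\p^{\oplus k} \twoheadrightarrow R_\p$ for some $k$, which splits because $R_\p$ is projective over itself. Applying $\Hom_{R_\p}(-, \omega_\p)$ exhibits $\omega_\p$ as a direct summand of $R_\p^{\oplus k}$, so $\omega_\p$ is free; the isomorphism $R_\p \cong \Hom_{R_\p}(\omega_\p, \omega_\p)$ then forces rank one, yielding the contradiction $\omega_\p \cong R_\p$.

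For the consequently statement, which is a local condition since both $\Tr$ and $\Ext^i$ behave well under localization: the forward direction is immediate, as $\omega$ being $n$-torsionfree with $\height \p = h \le n - 1$ gives $\omega_\p$ being $(h+1)$-torsionfree over $R_\p$, and the main equivalence yields $R_\p$ Gorenstein. The reverse direction requires checking $\Ext^i_R(\Tr\omega, R) = 0$ for $i \le n$ at every prime: at heights $\le n - 1$ the hypothesis makes $R_\p$ Gorenstein so $\omega_\p$ is free and the local Ext vanishes, while at higher heights an induction on $\dim R$ applied to $R_\p$ (which inherits the same Gorenstein-in-codimension hypothesis) closes the argument.

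The main obstacle is the top-dimensional case of the reverse direction of the consequently part, when $\p = \m$ with $\height \m = d \ge n$, where induction on dimension is not directly available. This must be handled by combining a support/depth analysis—the support of $\Ext^i_R(\Tr\omega, R)$ is contained in the non-Gorenstein locus, which has codimension $\ge n$ by the previously handled cases—with the maximal Cohen--Macaulayness of $\omega$ to force the desired vanishing.
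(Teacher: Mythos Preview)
Your proof of the main equivalence is correct but takes a different route from the paper. The paper argues directly: since $\omega$ is $(d+1)$-torsionfree it is a $(d+1)$-th syzygy, so there is a short exact sequence $0\to\omega\to F\to X\to 0$ with $F$ free and $X$ maximal Cohen--Macaulay; then $\Ext^1_R(X,\omega)=0$ forces the sequence to split, making $\omega$ a summand of $F$ and hence isomorphic to $R$. Your argument instead passes through Proposition~\ref{1}, which is legitimate (no circularity) but heavier, resting on Theorem~\ref{m} and local duality. The paper's route is shorter and more self-contained; yours has the virtue of showing that Proposition~\ref{1} already encodes the Gorenstein criterion.

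For the ``consequently'' part the paper gives no separate argument, and you correctly isolate the reverse direction as the real content. Your outline, however, stops short of a proof at $\p=\m$: induction on $\dim R$ says nothing there, and the closing ``support/depth analysis'' remains a slogan. Here is how to carry it out. Induct on $i\in\{1,\dots,n\}$ rather than on $\dim R$. Assuming $\Ext^j_R(\Tr\omega,R)=0$ for $j<i$, realize $\omega$ as an $(i-1)$-th syzygy via an exact sequence $0\to\omega\to F_0\to\cdots\to F_{i-2}\to N\to 0$; then $\Ext^i_R(\Tr\omega,R)\cong\Ext^1_R(\Tr N,R)=\ker(N\to N^{**})\subseteq N$. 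The depth lemma gives $\depth N_\p\ge\height\p-(i-1)$ for every prime $\p$, so any associated prime of $N$---and hence of the submodule $\Ext^i_R(\Tr\omega,R)$---has height at most $i-1\le n-1$. At such a prime $R_\p$ is Gorenstein by hypothesis, so $\omega_\p$ is free and the localized Ext vanishes, a contradiction. The missing idea in your sketch is that the depth bound must be applied to the cosyzygy $N$, whose associated primes are controlled by the maximal Cohen--Macaulayness of $\omega$, rather than to the Ext module directly.
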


\begin{proof}
The ``if" part is obvious.
Conversely assume that $\omega$ is $d+1$-torsionfree.
Then there exists an $R$-module $X\in\cm R$ with a short exact sequence $0\to \omega \to F \to X \to 0$, where $F$ is a free module; see \cite[Proposition 2.4]{IW}.
Since $\Ext^1_R(X,\omega)$ vanishes, this exact sequence splits.
It follows that $\omega$ is a summand of $F$, which implies that $\omega$ is isomorphic to $R$.
Thus $R$ is Gorenstein.
\end{proof}

We denote by $\ng(R)$ the {\em non-Gorenstein locus} of $R$, that is, the set of prime ideals $\p$ of $R$ such that $R_\p$ is not Gorenstein.

\begin{cor}
Let $R$ be a Cohen--Macaulay local ring with canonical module $\omega$.
Put $h=\codim \ng(R)$.
Then an inclusion $\tr\omega \subseteq \ann\Ext^{h+1}_R(\Tr \omega,R)$ holds.
\end{cor}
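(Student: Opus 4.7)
The plan is to chain together the three preceding results: Theorem \ref{m}, Lemma \ref{l213}, and Lemma \ref{l214}. The core observation is that the codimension of the non-Gorenstein locus is exactly the numerical invariant governing the torsionfreeness of $\omega$ that feeds into Lemma \ref{l213}.

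First I would establish that $\omega$ is $h$-torsionfree. By definition, $h=\codim\ng(R)$ means that every prime $\p\in\ng(R)$ satisfies $\height\p\ge h$, so $R_\p$ is Gorenstein for every prime $\p$ with $\height\p\le h-1$. Lemma \ref{l214} then immediately gives that $\omega$ is $h$-torsionfree. (If $h=0$, i.e.\ $R$ is not Gorenstein at some minimal prime, the conclusion $\tr\omega\subseteq\ann\Ext^1_R(\Tr\omega,R)$ to be proved follows directly from Theorem \ref{m} combined with Lemma \ref{l112}, so the case $h\ge 1$ is the substantive one.)

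Next, applying Lemma \ref{l213} with $M=\omega$ and $n=h$ yields the inclusion
$$
\ann\lend_R(\omega)\;\subseteq\;\ann\Ext^{h+1}_R(\Tr\omega,R).
$$
Finally, Theorem \ref{m} identifies $\tr\omega$ with $\ann\lend_R(\omega)$ (this is the equivalence (1)=(4) in that theorem), so substituting gives the desired $\tr\omega\subseteq\ann\Ext^{h+1}_R(\Tr\omega,R)$.

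There is no real obstacle here beyond correctly bookkeeping the indices; the only point that deserves care is checking that the definition of $h$ supplies precisely the Gorensteinness hypothesis for primes of height at most $h-1$ required by Lemma \ref{l214}, so that Lemma \ref{l213} can be applied with $n=h$ rather than $n=h-1$ or $n=h+1$.
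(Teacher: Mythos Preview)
Your proof is correct and follows exactly the paper's approach: invoke Lemma \ref{l214} to obtain $h$-torsionfreeness of $\omega$, then apply Lemma \ref{l213}, with Theorem \ref{m} providing the identification $\tr\omega=\ann\lend_R(\omega)$. Your write-up is in fact more careful than the paper's two-line proof, since you make explicit the passage through Theorem \ref{m} and separately note that the boundary case $h=0$ is covered by Lemma \ref{l112} (the paper's statement of Lemma \ref{l213} only defines $n$-torsionfreeness for $n\ge 1$, so this edge case is worth acknowledging).
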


\begin{proof}
Note that $\omega$ is $h$-torsionfree by Lemma \ref{l214}.
Therefore the assertion follows by Lemma \ref{l213}.
\end{proof}

We close this section by posing some questions related to the corollary above, and put some remarks on them.

\begin{ques}\label{3}
Let $R$ be a $d$-dimensional Cohen--Macaulay local ring with canonical module $\omega$.
\begin{enumerate}[(1)]
\item
Does there exist an $R$-module $M$ such that $\tr\omega=\ann\Ext^{d+1}_R(M,R)$?
\item
Put $h=\codim\ng(R)$.
Then, does the equality $\tr\omega=\ann\Ext^{h+1}_R(\Tr\omega,R)$ hold?
\end{enumerate}
\end{ques}

\begin{rem}
\begin{enumerate}[(1)]
\item
Proposition \ref{1} says that Question \ref{3}(1) is affirmative if $R$ is locally Gorenstein on the punctured spectrum.
\item
The unmixed parts of the equality in Question \ref{3}(2) coincide.
In fact, let $I$ (resp. $J$) be the unmixed part of the left (resp. right) hand side.
Take any $\p\in\assh_R(R/\tr\omega)$.
Then we have $\height\p=h$ and see that $R_\p$ is locally Gorenstein on the punctured spectrum.
Proposition \ref{1} implies that
$$
I_\p=(\tr_R\omega)_\p=\tr_{R_\p}\omega_\p=\ann_{R_\p}\Ext_{R_\p}^{h+1}(\Tr_{R_\p}\omega_\p,R_\p)=(\ann_R\Ext_R^{h+1}(\Tr_R\omega,R))_\p.
$$
\item
The equality in Proposition \ref{1} is not true in general.
Let $R=k[[x,y,z]]/(x,y)^3$ with $k$ a field.
Then $d=1$, and we have $\tr\omega=(x,y)^2$, while $\ann\Ext^{d+1}(\Tr\omega,R)=(x,y)$.
On the other hand, $R$ satisfies the equality in Question \ref{3}(2); one has $\ng(R)=\V(0)$ and $h=0$.
\end{enumerate}
\end{rem}


\section{Reduction by a regular sequence}

We recall the notion of residues for Cohen--Macaulay local rings introduced in \cite{HHS}.

\begin{dfn}[Herzog--Hibi--Stamate]
Let $R$ be a Cohen--Macaulay local ring with a canonical module $\omega$.
Assume $R$ is locally Gorenstein on the punctured spectrum so that $\tr(\omega)$ is $\m$-primary.
Then we call the colength $\ell_R(R/\tr(\omega))$ the \textit{residue} of $R$, and denote by $\res(R)$.
\end{dfn}

It directly follows from the definition that $R$ is Gorenstein if and only if $\res(R)=0$,
and $R$ is nearly Gorenstein if and only if $\res(R)\le 1$.
Some upper bound of $\res(R)$ for an artinian ring $R$ is given by Ananthnarayan \cite{An}.
On the trace ideal of $\omega$ and reductions by regular sequences, the following lemma is fundamental.

\begin{lem} \label{l22}
Let $R$ be a Cohen--Macaulay local ring with a canonical module $\omega_R$ and an $R$-regular sequence $(\underline{x})$ in $\m$.
Set $R'=R/(\underline{x})$.
Then a containment $\tr_R(\omega_R)R'\subseteq \tr_{R'}(\omega_{R'})$ holds.
In particular, one has an inequality $\res(R)\ge \res(R')$.
\end{lem}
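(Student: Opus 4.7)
The plan is to produce the containment $\tr_R(\omega_R)R'\subseteq \tr_{R'}(\omega_{R'})$ by pushing $R$-linear maps $\omega_R\to R$ forward along the reduction $R\twoheadrightarrow R'$, and then derive the residue inequality by an elementary length count, after checking that $\res(R')$ is even defined.

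First I would use the standard compatibility of canonical modules with reduction: since $\omega_R$ is maximal Cohen--Macaulay and $\underline{x}$ is $R$-regular, $\underline{x}$ is also $\omega_R$-regular, and so $\omega_{R'}=\omega_R/(\underline{x})\omega_R=\omega_R\otimes_R R'$. Consequently, every $f\in\Hom_R(\omega_R,R)$ descends to $\bar f:=f\otimes_R R'\in\Hom_{R'}(\omega_{R'},R')$, and $\im\bar f=(\im f+(\underline{x}))/(\underline{x})=\im f\cdot R'$. Taking the sum over all such $f$ gives
$$\tr_R(\omega_R)\cdot R'=\sum_{f\in\Hom_R(\omega_R,R)}\im f\cdot R'=\sum_{f}\im\bar f\;\subseteq\;\tr_{R'}(\omega_{R'}),$$
which is the desired containment. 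Note that equality need not hold since the base-change map $\Hom_R(\omega_R,R)\otimes_R R'\to\Hom_{R'}(\omega_{R'},R')$ is in general not surjective.

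For the consequence about residues, I would first verify that $R'$ is itself locally Gorenstein on its punctured spectrum, so that $\res(R')$ makes sense. Indeed, any non-maximal prime $\q\in\spec R'$ corresponds to a prime $\p\in\spec R$ with $(\underline{x})\subseteq\p\subsetneq\m$, and $R'_\q\cong R_\p/(\underline{x})R_\p$ is the quotient of the Gorenstein local ring $R_\p$ by a regular sequence, hence Gorenstein. Now the inequality is immediate from the chain
$$\res(R)=\ell_R(R/\tr_R(\omega_R))\ge \ell_R\bigl(R/(\tr_R(\omega_R)+(\underline{x}))\bigr)=\ell_{R'}\bigl(R'/\tr_R(\omega_R)R'\bigr)\ge \ell_{R'}(R'/\tr_{R'}(\omega_{R'}))=\res(R'),$$
where the first step is because $R/(\tr_R(\omega_R)+(\underline{x}))$ is a quotient of $R/\tr_R(\omega_R)$, the middle identification is the usual $R$-versus-$R'$ length agreement on modules killed by $(\underline{x})$, and the last step uses the containment just established.

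There is no serious obstacle: the only small technical point is verifying that $\omega_{R'}=\omega_R/(\underline{x})\omega_R$ and that $R'$ inherits Gorensteinness on the punctured spectrum, both of which are standard. The slight subtlety worth flagging in the write-up is that the containment $\tr_R(\omega_R)R'\subseteq\tr_{R'}(\omega_{R'})$ can be proper, which is precisely why the inequality $\res(R)\ge\res(R')$ need not be an equality in general.
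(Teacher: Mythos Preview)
Your proof is correct; the paper itself does not give an argument but simply refers to \cite[Proposition 2.3]{HHS} and its proof, and your base-change of $\Hom_R(\omega_R,R)$ along $R\to R'$ together with the elementary length comparison is precisely the standard argument underlying that reference. The extra care you take to verify that $R'$ is locally Gorenstein on its punctured spectrum (so that $\res(R')$ is defined) is a welcome detail that the paper leaves implicit.
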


\begin{proof}
We refer to \cite[Proposition 2.3]{HHS} and its proof.
\end{proof}

\begin{rem}
In general, $\res(R)$ might be strictly bigger than $\res(R')$.
Indeed, let $R$ be the local numerical semigroup ring $k[\![t^3,t^7,t^8]\!]$ over a field $k$.
Then $R$ is an integral domain of dimension one, and so it is locally Gorenstein on the punctured spectrum.
The matrix $A=\begin{pmatrix} x_1^2 & x_2 & x_3\\ x_2 & x_3 & x_1^3\end{pmatrix}$ provides a minimal free resolution
$$0\to S^2 \xrightarrow[]{A^t} S^3 \to S \to R \to 0$$
of $R$ over a regular local ring $S=k[\![x_1,x_2,x_3]\!]$.
Since $R$ has Cohen--Macaulay type $2$, it follows from \cite[Corollary 3.4]{HHS} that $\tr(\omega_R)=I_1(A)R=(x_1^2,x_2,x_3)R=(t^6,t^7,t^8)\not=\m$ (Note that more general computation of $\tr(\omega_R)$ for a numerical semigroup ring $R$ of embedding dimension three is given in \cite[Proposition 2.1]{HHS2}).
On the other hand, the element $t^3\in \m$ is $R$-regular, and $R/(t^3)$ is isomorphic to $k[x_2,x_3]/(x_2,x_3)^2=:R'$.
It is easy to check that $R'$ is nearly Gorenstein.
Thus we have $\res(R/(t^3))=1<\res(R)=2$.
\end{rem}


On the other hand, we claim in Theorem \ref{thB} that a local ring $(R,\m)$ is nearly Gorenstein if $\underline{x}$ is a regular sequence in $\m^2$ and $R/(\underline{x})$ is nearly Gorenstein.
We give the proof of Theorem \ref{thB} in the followings by using Theorem \ref{m}.

\begin{proof}[Proof of Theorem \ref{thB}]
In all directions, we may assume $R$ is non-Gorenstein.

(1) $\Rightarrow$ (2): This implication follows by Lemma \ref{l22}.
(2) $\Rightarrow$ (3) and (4): These implications are clear.
(3) $\Rightarrow$ (1): By an induction argument, we reduce the proof to the case when $t=1$.

Hence we may assume $\underline{x}=x$ and $R':=R/(x)$ is nearly Gorenstein.
We aim to show that $R$ is nearly Gorenstein.
Fix an $R$-module $M\in\cm(R)$.
Take an element $y\in \m$.
Consider a commutative diagram
\[
\xymatrix{
\cdots \ar[r] & \Ext^1_R(M,R) \ar[r]^-{x} \ar[d]^-y & \Ext^1_R(M,R) \ar[r] \ar[d]^-y & \Ext^2_R(M/x M,R) \ar[r] \ar[d]^-y & \cdots\\
\cdots \ar[r] & \Ext^1_R(M,R) \ar[r]^-{x} & \Ext^1_R(M,R) \ar[r] & \Ext^2_R(M/x M,R) \ar[r] & \cdots
}
\]
with exact rows induced by a short exact sequence $0\to M \to M \to M/xM \to 0$.
Note that $\Ext^2_R(M/x M,R')$ is isomorphic to $\Ext^1_{R'}(M/xM,R')$.
Since $R'$ is nearly Gorenstein, $y$ annihilates $\Ext^1_{R'}(\cm(R'),R')$ by Theorem \ref{m}.
Therefore the diagram above yields that $y\Ext^1_R(M,R)$ is contained in $x\Ext^1_R(M,R)$.
It follows that $\m\Ext^1_R(M,R)$ is contained in $x\Ext^1_R(M,R)$.
Since $x$ belongs to $\m^2$, Nakayama's lemma implies that $\m\Ext^1_R(M,R)=0$.
Thus $\m\Ext^1_R(\cm(R),R)=0$.
By Theorem \ref{m}, this equality means that $R$ is nearly Gorenstein.

(4) $\Rightarrow$ (1): We only need to prove the assertion in the case that $t=1$ and $\underline{x}=x$.
Suppose that $R':=R/(x)$ is nearly Gorenstein.
Fix an $R$-module $M\in\cm(R)$.
Since $x\in \tr\omega$, $x$ annihilates $\Ext^1_R(M,R)$ by Theorem \ref{m}.
We have an injection $\Ext^1_R(M,R) \to \Ext^1_{R'}(M/xM,R')$.
It follows that $\m\Ext^1_R(M,R)=0$.
Thus $\m\Ext^1_R(\cm(R),R)=0$.
\end{proof}

We also make a comparison of $\tr_R(\omega_R)$ and $\tr_{R'}(\omega_{R'})$, where $R'=R/(\underline{x})$ for some regular sequence $\underline{x}$ in more general cases.
We prepare the following lemmas.

\begin{lem} \label{l23}
Let $R$ be a ring, and $M,N$ be $R$-modules.
Assume $x$ is a regular element on $M$ and $N$ such that $x\in \ann \Ext^t_R(M,N)\cap \ann\Ext^{t+1}_R(M,N)$.
Then there exists an isomorphism
$$
\Ext^t_{R/(x^u)}(M/x^uM,N/x^uN)\cong \Ext^t_R(M,N)\oplus \Ext^{t+1}_R(M,N)
$$
for any $u\ge 2$.
\end{lem}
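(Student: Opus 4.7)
The plan is to exhibit the desired isomorphism as the splitting of a short exact sequence obtained from a change-of-rings argument. First I would choose a free resolution $F_\bullet \to M$ over $R$; since $x^u$ is $M$-regular, $F_\bullet/x^u F_\bullet$ is a free resolution of $M/x^u M$ over $R':=R/(x^u)$, and for each free $R$-module $F_i$ one has a canonical identification $\Hom_{R'}(F_i/x^u F_i, N/x^u N) = \Hom_R(F_i, N/x^u N)$. Therefore $\Ext^t_{R'}(M/x^u M, N/x^u N)$ is computed as the $t$-th cohomology of $\Hom_R(F_\bullet, N/x^u N)$. Applying $\Hom_R(F_\bullet, -)$ to the short exact sequence $0 \to N \xrightarrow{x^u} N \to N/x^u N \to 0$ (exact because $x^u$ is $N$-regular) and passing to cohomology gives a long exact sequence in which, by hypothesis, the multiplication-by-$x^u$ maps on $\Ext^t_R(M,N)$ and $\Ext^{t+1}_R(M,N)$ both vanish. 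This collapses to a short exact sequence
\[
0 \to \Ext^t_R(M,N) \to \Ext^t_{R'}(M/x^u M, N/x^u N) \xrightarrow{\beta} \Ext^{t+1}_R(M,N) \to 0,
\]
in which $\beta$ is the associated Bockstein-type connecting map.

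The core step will be to construct a splitting $s \colon \Ext^{t+1}_R(M,N) \to \Ext^t_{R'}(M/x^u M, N/x^u N)$ of $\beta$. Given $\gamma \in \Ext^{t+1}_R(M,N)$, I would represent it by a cocycle $\tilde\gamma \in \Hom_R(F_{t+1}, N)$; because $x\gamma = 0$, I can choose $\sigma \in \Hom_R(F_t, N)$ with $\partial\sigma = x\tilde\gamma$, where $\partial$ denotes the differential on $\Hom_R(F_\bullet, N)$. Since $\partial(x^{u-1}\sigma) = x^u \tilde\gamma$ vanishes modulo $x^u$, the element $x^{u-1}\sigma \bmod x^u$ is a cocycle in $\Hom_R(F_\bullet, N/x^u N)$, and I set $s(\gamma) := [\,x^{u-1}\sigma \bmod x^u\,]$.

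The final step is to verify that $s$ is well-defined and satisfies $\beta \circ s = \id$. Replacing $\tilde\gamma$ by $\tilde\gamma + \partial\rho$ permits $\sigma$ to be shifted by $x\rho$, changing $x^{u-1}\sigma$ only by $x^u\rho \equiv 0 \pmod{x^u}$. Replacing $\sigma$ by $\sigma + \tau$ for a cocycle $\tau \in \Hom_R(F_t, N)$ produces a class $[\tau] \in \Ext^t_R(M,N)$ which is annihilated by $x$, and hence by $x^{u-1}$ since $u \ge 2$; so $x^{u-1}\tau = \partial\eta$ for some $\eta$, and the reduction $x^{u-1}\tau \bmod x^u$ is a coboundary in $\Hom_R(F_\bullet, N/x^u N)$. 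Finally, $\beta(s(\gamma)) = \gamma$ is immediate from the recipe for $\beta$: lift $s(\gamma)$ to $x^{u-1}\sigma$, apply $\partial$ to obtain $x^u\tilde\gamma$, and divide by $x^u$ to recover $\tilde\gamma$. The main obstacle is precisely the well-definedness of $s$ with respect to the choice of $\sigma$: this is the only place where the hypothesis $u \ge 2$ enters, and it is essential, as it is exactly what guarantees $x^{u-1} \in \ann\Ext^t_R(M,N)$.
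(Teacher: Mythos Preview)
Your argument is correct. You and the paper both obtain the same short exact sequence
\[
0 \to \Ext^t_R(M,N) \to \Ext^t_{R/(x^u)}(M/x^uM,N/x^uN) \xrightarrow{\beta} \Ext^{t+1}_R(M,N) \to 0
\]
and then split it, but the mechanisms are different. The paper works in the first variable: it applies $\Hom_R(-,N)$ to the commutative diagram comparing $0\to M\xrightarrow{x^u}M\to M/x^uM\to 0$ with $0\to M\xrightarrow{x^{u-1}}M\to M/x^{u-1}M\to 0$ (connected by multiplication by $x$), obtains a map of short exact sequences whose left vertical arrow is zero, and deduces by a pure diagram chase that the induced map $\Ext^{t+1}_R(M/x^{u-1}M,N)\to\Ext^{t+1}_R(M/x^uM,N)$ factors through a section of $\beta$. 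Your approach works in the second variable and is more hands-on: you build the section explicitly on cochains via the Bockstein-style recipe $\tilde\gamma\mapsto x^{u-1}\sigma$ with $\partial\sigma=x\tilde\gamma$. The paper's route is coordinate-free and avoids picking resolutions; yours is more elementary and makes transparent exactly where $u\ge2$ is used (namely, to ensure $x^{u-1}$ already kills $\Ext^t_R(M,N)$ so that the ambiguity in $\sigma$ washes out). One small remark: your claim that $F_\bullet/x^uF_\bullet$ resolves $M/x^uM$ over $R/(x^u)$ tacitly uses that $x$ is $R$-regular (so that $\Tor^R_{>0}(M,R/(x^u))=0$); the paper makes the same tacit assumption when it invokes \cite[Lemma~3.1.16]{BH}, so this is not a divergence between the two arguments.
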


\begin{proof}
Consider a commutative diagram
\begin{gather}\label{diag1}
\xymatrix{
0 \ar[r] & M \ar[r]^-{x^u} \ar[d]^-x & M \ar[r] \ar[d]^-{\id} &M/x^uM \ar[r] \ar[d] &  0\\
0 \ar[r] & M \ar[r]^-{x^{u-1}} & M \ar[r] & M/x^{u-1}M \ar[r] & 0 
}
\end{gather}
with exact rows.
Taking the long exact sequence induced by the functor $\Hom_R(-,N)$, we get a commutative diagram
\[
\xymatrix{
\cdots \ar[r]^-{x^{u-1}} & \Ext^t_R(M,N) \ar[r] \ar[d]^-x & \Ext^{t+1}_R(M/x^{u-1}M,N) \ar[r]^-\alpha \ar[d]^-\gamma & \Ext^{t+1}_R(M,N) \ar[r]^-{x^{u-1}} \ar[d]^{\id} & \cdots\\
\cdots \ar[r]^-{x^{u}} & \Ext^t_R(M,N) \ar[r] & \Ext^{t+1}_R(M/x^{u}M,N) \ar[r]^-\beta & \Ext^{t+1}_R(M,N) \ar[r]^-{x^{u}} & \cdots
}
\]
with exact rows.
Since $x^{u-1}$ and $x^{u}$ annihilates $\Ext^t_R(M,N)$ and $\Ext^{t+1}_R(M,N)$,
the $3$-terms of both rows become short exact sequences.
The homomorphism on left-terms is a multiplication by $x$, and hence it is zero.
Therefore $\gamma$ factors through $\alpha$, that is, there exists a homomorphism $\delta\colon \Ext^{t+1}_R(M,N) \to \Ext^{t+1}_R(M/x^uM,N)$ with $\delta\alpha=\gamma$.
It yields equalities $\beta\delta\alpha=\beta\gamma=\id\alpha$, and the surjectivity of $\alpha$ implies that $\beta\delta=\id$.
This shows a direct sum decomposition
$$
\Ext^{t+1}_{R}(M/x^uM,N)\cong \Ext^t_R(M,N)\oplus \Ext^{t+1}_R(M,N)
$$
Since $x^u$ is regular on $N$, the left-hand side is isomorphic to $$\Ext^t_{R/(x^{u})}(M/x^uM,N/x^uN)$$
 by \cite[Lemma 3.1.16]{BH}.
\end{proof}

The following lemma is essentially same as the proposition \cite[Proposition 6.15]{Y} given by Yoshino in the context of efficient system of parameters (see also \cite[Proposition 15.8]{LW}).
For our convenience, we give a proof.

\begin{lem} \label{l24}
Let $R$ be a ring, and $M,N$ be $R$-modules.
Assume $\underline{x}=x_1,\dots,x_t$ is a regular sequence on $M$ and $N$ such that $(\underline{x})\subseteq \ann \Ext^{i}_R(M,N)$ for all $i=1,\dots,t$.
Then for any homomorphism 
$\tilde{f}\colon M/\underline{x^{u+1}}M\to N/\underline{x^{u+1}}N$,
 where $u\ge 1$ is an integer, there exists a homomorphism $f\colon M \to N$ such that $f\otimes_RR/(\underline{x^u})$ is equal to $\tilde{f}\otimes_R R/(\underline{x^u})$.
\end{lem}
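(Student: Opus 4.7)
The plan is proof by induction on $t$. The base case $t=0$ is trivial: the sequence $\underline{x}$ is empty, so $M/(\underline{x^{u+1}})M=M$ and $N/(\underline{x^{u+1}})N=N$, and I simply take $f=\tilde{f}$.

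For $t=1$ (writing $\underline{x}=x$), I will show that the composite
$$
h\colon M \twoheadrightarrow M/x^{u+1}M \xrightarrow{\tilde{f}} N/x^{u+1}N \xrightarrow{\pi} N/x^{u}N
$$
lifts along the canonical surjection $N\twoheadrightarrow N/x^{u}N$ to a homomorphism $f\colon M\to N$; such an $f$ will satisfy $f\otimes_R R/(x^{u})=\tilde{f}\otimes_R R/(x^{u})$ by construction. Applying $\Hom_R(M,-)$ to $0\to N\xrightarrow{x^{u}} N\to N/x^{u}N\to 0$ identifies the obstruction to the lift as $\delta_u(h)\in\Ext^1_R(M,N)$. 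To see it vanishes, I compare with the short exact sequence $0\to N\xrightarrow{x^{u+1}} N\to N/x^{u+1}N\to 0$ via the morphism of short exact sequences whose leftmost vertical map is multiplication by $x$, middle is the identity, and rightmost is $\pi$. The induced commutative square
\[
\xymatrix{
\Hom_R(M,N/x^{u+1}N) \ar[r]^-{\delta_{u+1}} \ar[d]^-{\pi_\ast} & \Ext^1_R(M,N) \ar[d]^-{x} \\
\Hom_R(M,N/x^{u}N) \ar[r]^-{\delta_u} & \Ext^1_R(M,N)
}
\]
yields $\delta_u(h)=\delta_u(\pi_\ast \tilde{g})=x\cdot\delta_{u+1}(\tilde{g})=0$, where $\tilde{g}:=\tilde{f}\circ(M\twoheadrightarrow M/x^{u+1}M)$, since $x\in\ann\Ext^1_R(M,N)$ by hypothesis.

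For the inductive step ($t\ge 2$), I set $\underline{y}=x_1,\ldots,x_{t-1}$, $\bar{M}=M/x_t^{u+1}M$, and $\bar{N}=N/x_t^{u+1}N$. The strategy is to (i) apply the inductive hypothesis to the length-$(t-1)$ regular sequence $\underline{y}$ on the pair $(\bar{M},\bar{N})$ to produce an $R$-linear map $\bar{f}\colon \bar{M}\to\bar{N}$ with $\bar{f}\equiv \tilde{f}\pmod{(\underline{y^{u}})}$; and (ii) apply the $t=1$ base case with the single element $x_t$ (using $x_t\in\ann\Ext^1_R(M,N)$) to lift $\bar{f}\colon M/x_t^{u+1}M\to N/x_t^{u+1}N$ to $f\colon M\to N$ with $f\equiv \bar{f}\pmod{x_t^{u}}$. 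Combining the two congruences yields $f\equiv\tilde{f}\pmod{(\underline{x^{u}})}$ as required.

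The main obstacle will be verifying the annihilator hypothesis needed in step (i), namely $(\underline{y})\subseteq\ann\Ext^i_R(\bar{M},\bar{N})$ for $i=1,\ldots,t-1$. Unwinding $\Ext^i_R(\bar{M},\bar{N})$ through the long exact sequences from $0\to M\xrightarrow{x_t^{u+1}} M\to\bar{M}\to 0$ and $0\to N\xrightarrow{x_t^{u+1}} N\to\bar{N}\to 0$ exhibits it as an iterated extension of $\Ext^j_R(M,N)$ for $j$ in a small range around $i$; the hypothesis $(\underline{x})\subseteq\ann\Ext^i_R(M,N)$ in degrees up through $t$ (one beyond the maximum $t-1$ appearing) provides precisely the slack needed to propagate annihilator control through these extensions. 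Handling the boundary subquotient $\Hom_R(M,N)/x_t^{u+1}\Hom_R(M,N)$ that appears at $i=1$ is the most delicate point of the argument.
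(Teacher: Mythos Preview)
Your base case $t=1$ is correct and matches the paper's argument. The inductive step, however, has a genuine gap. You propose to apply the inductive hypothesis \emph{over the same ring $R$} to the pair $(\bar{M},\bar{N})=(M/x_t^{u+1}M,\,N/x_t^{u+1}N)$ with the shorter sequence $\underline{y}=x_1,\dots,x_{t-1}$, which requires $(\underline{y})\subseteq\ann\Ext^i_R(\bar{M},\bar{N})$ for $i=1,\dots,t-1$. This fails in general. Unwinding the long exact sequences as you suggest shows that $\Hom_R(M,N)/x_t^{u+1}\Hom_R(M,N)$ sits as a \emph{submodule} of $\Ext^1_R(\bar{M},\bar{N})$; for $(\underline{y})$ to annihilate the latter it must annihilate this submodule, forcing $y_j\Hom_R(M,N)\subseteq x_t^{u+1}\Hom_R(M,N)$. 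Already for $M=N=R=k[x_1,x_2]$ with $t=2$ one computes $\Ext^1_R(\bar{M},\bar{N})\cong R/(x_2^{u+1})$, which $x_1$ does not annihilate. Even away from $i=1$, your filtration argument only shows that each $y_j$ kills the graded pieces, hence that $y_j^4$ kills $\Ext^i_R(\bar{M},\bar{N})$; the ``slack'' of having the original hypothesis through degree $t$ does not resolve this extension problem.

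The paper sidesteps both difficulties by passing to the quotient ring $R'=R/(x_1^{u+1})$ and applying the inductive hypothesis \emph{over $R'$}. The key input is Lemma~\ref{l23}: using $u+1\ge 2$ together with the annihilator hypothesis in degrees $i$ and $i+1$, it shows that the relevant short exact sequence actually \emph{splits}, yielding $\Ext^i_{R'}(\bar{M},\bar{N})\cong\Ext^i_R(M,N)\oplus\Ext^{i+1}_R(M,N)$ for $i=1,\dots,t-1$. Over $R'$ the troublesome $\Hom$ quotient never appears, and the direct-sum description makes annihilation by $(x_2,\dots,x_t)$ immediate. A minor additional point: peeling off $x_t$ rather than $x_1$ requires the regular sequence on $M$ and $N$ to be permutable, which is not guaranteed in the stated generality; peeling off $x_1$ first, as the paper does, avoids this.
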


\begin{proof}
Fix a homomorphism $\tilde{f}\colon M/\underline{x^{u+1}}M\to N/\underline{x^{u+1}}N$.
First consider the case $t=1$, in order to start an induction argument.
Consider a commutative diagram same as \eqref{diag1}.
Then applying $\Hom_R(M,-)$ to obtain a commutative diagram
\[
\xymatrix{
\Hom_R(M,N) \ar[r] \ar[d]^-{\id} & \Hom_R(M,N/x^{u+1}N) \ar[r] \ar[d]^-\phi & \Ext^1_R(M,N) \ar[d]^-x\\
\Hom_R(M,N) \ar[r] & \Hom_R(M,N/x^{u}N) \ar[r] & \Ext^1_R(M,N)
}
\]
with exact rows.
Then one has $\Hom_R(M,N/x^{u+1}N)\cong \Hom_R(M/x^{u+1}M,N/x^{u+1}N)$, and so $\tilde{f}$ can be regarded as an element of $\Hom_R(M,N/x^{u+1}N)$.
By the assumption, the vertical map $x$ on the right of the diagram is zero.
Thus $\phi(\tilde{f})$ has a lift $f\in\Hom_R(M,N)$.
This means that $\tilde{f}\otimes_R R/(x^u)=f\otimes_R R/(x^u)$.

Now assume that the assertion holds for $1,\dots, t-1$ and we shall proceed an induction.
By Lemma \ref{l23}, $(\underline{x'}):=(x_2,\dots,x_t)$ annihilates $\Ext^i_{R/(x_1^{u+1})}(M/x_1^{u+1}M,N/x_1^{u+1}N)$ for all $i=1,\dots, t-1$.
Remark that the sequence $x_2,\dots,x_t$ is regular on $M/x_1^{u+1}M$ and $N/x_1^{u+1}N$.
Thus applying the induction hypothesis to the ring $R':=R/(x_1^{u+1})$, we obtain a homomorphism $f'\colon M/x_1^{u+1}M \to N/x_1^{u+1}N$ such that $f'\otimes_{R'} R'/(\underline{x'^u})$ is equal to $\tilde{f}\otimes_{R'} R'/(\underline{x'^u})$.
By the first step of the induction, we have a lift $f\colon M \to N$ of $f'\otimes_R R/(x_1^u)$.
Then $f$ satisfies the desired equality.
\end{proof}

We aim to apply the above lemma for a pair of modules $\omega, R$.

\begin{thm} \label{th26}
Let $R$ be a Cohen--Macaulay local ring with a canonical module $\omega$.
Let $\underline{x}=x_1,\dots,x_t$ be an $R$-regular sequence, $u\ge 1$ be an integer, $R'=R/(\underline{x^{u+1}})$ and $\pi\colon R\to R'$ be the natural surjection.
Assume that $\underline{x}$ is contained in $\ann \Ext_R^{i}(\omega,R)$ for all $i=1,\dots, t$.
\begin{enumerate}[\rm(1)]
\item $\tr_R(\omega_R)+(\underline{x^u})=\pi^{-1}(\tr_{R'}(\omega_{R'}))+(\underline{x^u})$.
\item If $\underline{x^u}$ is contained in $\tr_R(\omega)$, then $\res(R)=\res(R')$.
\end{enumerate}
\end{thm}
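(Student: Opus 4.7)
The plan is to prove part (1) first and then deduce part (2) from it. For (1), the inclusion $\tr_R(\omega) + (\underline{x^u}) \subseteq \pi^{-1}(\tr_{R'}(\omega_{R'})) + (\underline{x^u})$ is essentially Lemma \ref{l22} applied to the regular sequence $\underline{x^{u+1}}$: since $\tr_R(\omega) \cdot R' \subseteq \tr_{R'}(\omega_{R'})$, pulling back along $\pi$ gives $\tr_R(\omega) \subseteq \pi^{-1}(\tr_{R'}(\omega_{R'}))$. So the real substance is the reverse inclusion.

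To prove $\pi^{-1}(\tr_{R'}(\omega_{R'})) \subseteq \tr_R(\omega) + (\underline{x^u})$, I would fix $a \in \pi^{-1}(\tr_{R'}(\omega_{R'}))$ and use the standard identification $\omega_{R'} \cong \omega/\underline{x^{u+1}}\omega$. By the equivalence (1)$=$(2) of Theorem \ref{m} applied over $R'$, the multiplication-by-$\bar{a}$ map on $R'$ factors as $R' \xrightarrow{\tilde{\phi}} \omega_{R'}^{\oplus n} \xrightarrow{\tilde{\psi}} R'$ for some $n$. The key move is to lift each factor separately to $R$ via Lemma \ref{l24}. For $\tilde{\phi}$, taking $M=R$ and $N=\omega^{\oplus n}$, the Ext-annihilation hypothesis is automatic since $\Ext^i_R(R, \omega^{\oplus n})=0$ for $i\ge 1$, producing $\phi\colon R\to \omega^{\oplus n}$ with $\phi\otimes_R R/(\underline{x^u}) = \tilde{\phi}\otimes_R R/(\underline{x^u})$. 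For $\tilde{\psi}$, taking $M=\omega^{\oplus n}$ and $N=R$, the required condition $(\underline{x})\subseteq \ann\Ext^i_R(\omega^{\oplus n}, R) = \ann\Ext^i_R(\omega, R)$ for $i=1,\dots,t$ is exactly the standing hypothesis, producing $\psi\colon \omega^{\oplus n}\to R$ with the analogous reduction property.

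The composition $\psi\phi\colon R\to R$ is multiplication by some element $b\in R$, and since it factors through a direct sum of copies of $\omega$, the equivalence (1)$=$(2) of Theorem \ref{m} over $R$ yields $b\in \tr_R(\omega)$. On the other hand, tensoring down gives $\psi\phi \otimes_R R/(\underline{x^u}) = \tilde{\psi}\tilde{\phi}\otimes_R R/(\underline{x^u}) = \bar{a}\cdot\id$, so $b\equiv a\pmod{(\underline{x^u})}$. Hence $a\in\tr_R(\omega)+(\underline{x^u})$, completing (1). For (2), assuming $(\underline{x^u})\subseteq \tr_R(\omega)$, we have $\tr_R(\omega)+(\underline{x^u})=\tr_R(\omega)$, and also $(\underline{x^u})\subseteq \pi^{-1}(\tr_{R'}(\omega_{R'}))$ by the easy inclusion, so $\pi^{-1}(\tr_{R'}(\omega_{R'}))+(\underline{x^u})=\pi^{-1}(\tr_{R'}(\omega_{R'}))$. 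Part (1) then gives $\tr_R(\omega)=\pi^{-1}(\tr_{R'}(\omega_{R'}))$, from which $R/\tr_R(\omega)\cong R'/\tr_{R'}(\omega_{R'})$ as $R$-modules and $\res(R)=\res(R')$ follows by taking lengths.

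The main obstacle is making the two independent lifts $\phi$ and $\psi$ carry enough information for their composition to recover multiplication by $a$ modulo $(\underline{x^u})$; this works precisely because Lemma \ref{l24} preserves reduction mod $(\underline{x^u})$ functorially, so the composition of the lifts is automatically compatible with the composition of the original maps. The asymmetry of the Ext hypothesis — only on $\Ext^i_R(\omega,R)$ and not $\Ext^i_R(R,\omega)$ — is not an issue, because the upward lift $R\to \omega^{\oplus n}$ requires no assumption while the downward lift $\omega^{\oplus n}\to R$ is exactly where the given annihilator condition is consumed.
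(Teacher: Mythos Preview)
Your argument is correct and follows essentially the same strategy as the paper, with both resting on Lemma~\ref{l24} to lift maps from level $R'$ to level $R$ compatibly modulo $(\underline{x^u})$. The paper's version is a touch more direct: it lifts a finite family of maps $\tilde f_i\colon\omega_{R'}\to R'$ whose images generate $\tr_{R'}(\omega_{R'})$ and reads off the inclusion $\tr_{R'}(\omega_{R'})R''\subseteq \tr_R(\omega_R)R''$ straight from the definition of trace, whereas you route through the factorization description (1)$=$(2) of Theorem~\ref{m} and perform an additional (automatic) lift of $R'\to\omega_{R'}^{\oplus n}$.
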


\begin{proof}
(1) It is enough to show that $\tr_R(\omega_R)R''=\tr_{R'}(\omega_{R'})R''$, where $R''=R/(\underline{x^u})$.
By Lemma \ref{l22}, $\tr_R(\omega_R)R'$ is contained in $\tr_{R'}(\omega_{R'})$.
Thus the inclusion $\tr_R(\omega_R)R''\subseteq \tr_{R'}(\omega_{R'})R''$ is obvious.
Take homomorphisms $\tilde{f_1},\dots,\tilde{f_s}\colon \omega_{R'} \to R'$ such that $\tr_{R'}(\omega_{R'})=\im \tilde{f_1}+\cdots+\im \tilde{f_s}$.
Note that $\omega_{R'}$ is isomorphic to $\omega_R\otimes_R R/(\underline{x^{u+1}})$.
Thanks to Lemma \ref{l24}, we have homomorphisms $f_1,\dots,f_s\colon \omega_R \to R$ such that
$f_i\otimes_RR''=\tilde{f_i}\otimes_RR''$ for all $i=1,\dots,s$.
Thus we get the following inclusions
$$
\tr_R(\omega_R)R'' \supseteq \sum_{i=1}^s \im (f_i\otimes_RR'') =\sum_{i=1}^s \im (\tilde{f_i}\otimes_RR'')=\tr_{R'}(\omega_{R'})R''.$$

(2) Assume $\underline{x^u}$ is contained in $\tr_R(\omega)$.
Then by Lemma \ref{l22}, $\tr_R(\omega_R)R'$ contains $(\underline{x^u})R'$.
Therefore by (1), we have $\tr_R(\omega_R)=\pi^{-1}(\tr_{R'}(\omega_{R'}))$.
It follows that $\res(R)=\ell(R'/\tr_R(\omega_R)R')=\ell(R'/\tr_{R'}(\omega_{R'}))=\res(R')$.
\end{proof}

\section{Annihilators of $\Ext^i_R(\omega,R)$}

In this section we aim to understand the relationship between $\Ext^i_R(\omega,R)$ and $\tr\omega$.
To state our main result in this section, we need the following lemma.

\begin{lem} \label{l32}
Let $R$ be a Cohen--Macaulay local ring with a canonical module $\omega$.
Assume that $R$ has Cohen--Macaulay type $2$ and is generically Gorenstein.
Then for any $i>0$, $\Ext^i_R(\omega,R)\cong \Ext^{i+1}_R(\Tr\omega,R)$ .
\end{lem}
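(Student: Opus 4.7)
The plan is to reduce the claim to the stronger structural isomorphism $\syz\Tr\omega \cong \omega$; once this is established, the lemma follows immediately from the standard syzygy shift $\Ext^{i+1}_R(\Tr\omega,R) \cong \Ext^i_R(\syz\Tr\omega,R)$, which is valid for every $i \ge 1$.

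Since the type hypothesis gives $\mu(\omega)=2$, I would start from a minimal presentation $R^s \xrightarrow{\phi} R^2 \twoheadrightarrow \omega \to 0$ associated with a choice of minimal generators $x_1,x_2$ of $\omega$. Dualizing produces the four-term exact sequence $0\to \omega^*\to R^2 \xrightarrow{\phi^*} R^s \to \Tr\omega\to 0$, which identifies $\syz\Tr\omega$ with $R^2/\omega^*$; here $\omega^*$ is embedded into $R^2$ via the evaluation map $f\mapsto (f(x_1),f(x_2))$. Since $\omega=R^2/\syz\omega$ by construction, it will suffice to exhibit an $R$-module automorphism of $R^2$ carrying the submodule $\omega^*$ onto the submodule $\syz\omega$.

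The key observation is that the ``rotation'' $J\colon R^2\to R^2$, $(u,v)\mapsto(-v,u)$, does precisely this. The inclusion $J(\omega^*)\subseteq \syz\omega$ drops out by applying any $f\in\omega^*$ to the tautological relation $(x_2,-x_1)\in\syz\omega$ (which encodes $x_2\cdot x_1 - x_1\cdot x_2=0$ in $\omega$), yielding $f(x_1)x_2-f(x_2)x_1=0$. For the reverse inclusion, given $(a,b)\in\syz\omega$ I would define a candidate $f\colon\omega\to R$ on generators by $f(x_1)=b$, $f(x_2)=-a$; well-definedness reduces to checking $cb=da$ for every relation $(c,d)\in\syz\omega$, and this identity falls out by multiplying by $x_2$ and using $ax_1=-bx_2$, $cx_1=-dx_2$ to get $(cb-da)x_2=0$, followed by cancelling a non-zero-divisor.

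The last ingredient is therefore the ability to choose minimal generators $x_1,x_2$ of $\omega$ that are both non-zero-divisors, and this is exactly where the generically Gorenstein hypothesis enters: it makes $R$ reduced and forces $\omega$ to have rank $1$, so $\omega$ is contained in no minimal prime of $R$, and a standard prime-avoidance argument applied to the cosets of $\m\omega$ in $\omega$ produces such generators. The main obstacle I anticipate is the bookkeeping to verify that $J$ restricts to an \emph{honest} isomorphism between the two embedded copies of $\omega^*$ and $\syz\omega$ inside $R^2$ (rather than merely an abstract isomorphism of the two modules); once this is confirmed, $J$ descends to an isomorphism $R^2/\omega^* \cong R^2/\syz\omega = \omega$, which is the required $\syz\Tr\omega \cong \omega$, and the lemma is proved.
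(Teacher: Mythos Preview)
Your approach is essentially the paper's, but repackaged more efficiently. Both arguments hinge on the same core fact: once $\omega$ is realized as a $2$-generated ideal $(x_1,x_2)$ of positive grade, the rotation $J=\left(\begin{smallmatrix}0&-1\\1&0\end{smallmatrix}\right)$ carries the evaluation image of $\omega^*$ inside $R^2$ onto $\syz\omega$. This is exactly the content of the short exact sequence $0\to\omega^*\xrightarrow{\alpha}R^{\oplus2}\to\omega\to0$ that the paper quotes from \cite{HH} (their $\alpha$ is your $J$ composed with evaluation, and their $\beta$ is your $J$). The paper then dualizes this sequence and uses $\omega^*\cong\syz^2\Tr\omega$ to get the isomorphism for $i\ge2$, handling $i=1$ separately via the Auslander--Bridger four-term sequence and a snake-lemma chase. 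Your observation that $J$ descends directly to $\syz\Tr\omega=R^2/\omega^*\cong R^2/\syz\omega=\omega$ is cleaner: a single dimension shift then gives all $i\ge1$ at once, with no case split.

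Two small corrections are in order. First, generically Gorenstein does \emph{not} make $R$ reduced; the relevant (standard) consequence is that $\omega$ is isomorphic to an ideal of $R$ containing a non-zerodivisor, and it is this identification that gives meaning to products like $x_1x_2$ and to your ``tautological relation'' $(x_2,-x_1)\in\syz\omega$. Second, you do not need both generators to be non-zerodivisors, so the prime-avoidance step (which, as written, would need an infinite residue field) is unnecessary: the symmetric computation gives $(cb-da)x_1=0$ as well as $(cb-da)x_2=0$, hence $(cb-da)\omega=0$, and faithfulness of the canonical module forces $cb=da$.
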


\begin{proof}
A homomorphism $\sigma \colon\omega \to \omega^{**}$ given by $z\mapsto[f\mapsto f(z)]$ fits into an exact sequence
$$0\to \Ext^1_R(\Tr\omega,R) \to \omega \xrightarrow[]{\sigma} \omega^{**} \to \Ext^2_R(\Tr\omega,R) \to 0,$$
see \cite[Proposition 12.5]{LW}.
Since $R$ is generically Gorenstein and has type $2$, $\omega$ is isomorphic to a $2$-generated ideal $I=(a,b)$ of $R$ containing a regular element.
Then we have a short exact sequence 
$$0 \to \omega^* \xrightarrow[]{\alpha} R^{\oplus 2} \xrightarrow[]{(a,b)} \omega \to 0,$$
where $\alpha$ is given by an assignment $f\mapsto (f(b),-f(a))^T$; see \cite[Lemma 3]{HH} for instance.
Taking an $R$-dual of this sequence, we obtain a diagram
$$
\xymatrix{
0 \ar[r] & \omega^* \ar[r]^\alpha & R^{\oplus 2} \ar[r]^{(a,b)} \ar[d]^{\beta}& \omega \ar[r] \ar[d]^{\sigma} & 0 &\\
0 \ar[r] & \omega^* \ar[r] & R^{\oplus 2} \ar[r]^{\alpha^*} & \omega^{**} \ar[r] & \Ext^1_R(\omega,R) \ar[r] & 0
}
$$
, where $\beta$ is an automorphism $\left(\begin{smallmatrix}0&-1\\1&0\end{smallmatrix}\right)$, and isomorphisms $\Ext^i_R(\omega^*,R)\cong \Ext^{i+1}_R(\omega,R)$ for all $i>0$.
Since $\omega^*\cong \syz^2\Tr\omega$, we get isomorphisms $\Ext^{i+2}_R(\Tr \omega,R)\cong \Ext^{i+1}_R(\omega,R)$ for all $i>0$.
The commutativity of the square can be checked by diagram chases.
Thus by the snake lemma, we obtain isomorphisms $\Ext^2(\Tr\omega,R)\cong\cok \sigma\cong \Ext^1_R(\omega,R)$.
\end{proof}

\if0
The following is claimed in \cite[Theorem 6.1]{HSV}, however, we have a counterexample.

\begin{thm}[Huneke--\c{S}ega--Vraciu] \label{th33}
Let $R$ be a Cohen--Macaulay local ring of Cohen--Macaulay type at most $2$ with a canonical module $\omega$.
Assume $\Ext^i_R(\omega,R)=0$ for $i=1,2$.
Then $R$ is Gorenstein.
\end{thm}
\fi

In \cite{ABS,HH}, an analog of a conjecture of Tachikawa is discussed.
The statement of the conjecture is as follows.

\begin{conj}[Tachikawa]
Let $R$ be a Cohen--Macaulay local ring with a canonical module $\omega$.
If $\Ext^i_R(\omega,R)=0$ for all $i>0$, then $R$ is Gorenstein.
\end{conj}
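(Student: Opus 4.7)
This is the celebrated Tachikawa conjecture in commutative algebra and is wide open in general, so the plan below is a strategy that can be expected to succeed only in special cases accessible to the machinery of this paper. The opening move is a reduction to the artinian case. Given a maximal $R$-regular sequence $\underline{x}$, standard dimension-shift arguments using \cite[Lemma 3.1.16]{BH} and the long exact sequences associated with multiplication by $x_i$ should propagate the vanishing $\Ext^i_R(\omega,R)=0$ for $i>0$ to $\Ext^i_{R/(\underline{x})}(\omega/\underline{x}\omega, R/(\underline{x}))=0$ for $i>0$. Since $\omega/\underline{x}\omega$ is a canonical module for $R/(\underline{x})$, one reduces to $\dim R=0$, where $\omega$ is the injective hull of the residue field.

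In the artinian case the goal is to show that $\omega$ is free, equivalently that the trace map $\omega\otimes_R\omega^*\to R$, whose image is $\tr\omega$ by Theorem \ref{m}, is surjective. The natural plan is to dualise a minimal free resolution of $\omega$: under the vanishing hypothesis, $\Hom_R(F_\bullet,R)$ is a free resolution of $\omega^*$, and iterating this duality together with the identity $\End_R(\omega)\cong R$ should eventually force $\omega$ to have finite projective dimension, hence to be free. Morally this identifies the conjecture with an instance of the Auslander--Reiten conjecture applied to the canonical module, since $\Ext^i_R(\omega,\omega)=0$ for $i>0$ always holds.

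For the special case of Cohen--Macaulay type $2$ with $R$ generically Gorenstein, the plan becomes concrete via Lemma \ref{l32}: the isomorphism $\Ext^i_R(\omega,R)\cong\Ext^{i+1}_R(\Tr\omega,R)$ transfers the vanishing hypothesis to $\Ext^{\ge 2}_R(\Tr\omega,R)=0$. Coupled with the presentation $0\to\omega^*\to R^2\to\omega\to 0$ and the identification $\omega^*\cong\syz^2\Tr\omega$ appearing in the proof of Lemma \ref{l32}, the projective dimension of $\Tr\omega$ becomes tightly controlled, and a direct analysis of the natural evaluation map $\omega\to\omega^{**}$ should then force $\omega\cong R$.

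The main obstacle is the general case. Without a restriction on type, Lemma \ref{l32} is unavailable, and Corollary \ref{c14} yields only the trivial containment $\tr\omega\subseteq\ann(0)=R$, so the annihilator-based techniques of this paper give no purchase on the hypothesis. This gap is precisely what motivates the paper's subsequent proposal of a nearly Gorenstein version of the conjecture, where the extra hypothesis $\m\subseteq\tr\omega$ forces the offending $\Ext$ modules to have finite length, opening the door to an inductive argument that, combined with the type $2$ analysis above, produces a genuine theorem.
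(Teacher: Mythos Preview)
The statement you are addressing is labeled a \emph{Conjecture} in the paper and is not proved there; the authors simply record it as the motivating open problem before posing their own nearly Gorenstein variant (Question~4.3). There is therefore no ``paper's own proof'' to compare against, and you correctly identify this at the outset.

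Your write-up is best read not as a proof but as commentary. A couple of remarks on its content. The reduction to the artinian case is indeed routine and works as you indicate. The sentence ``iterating this duality together with the identity $\End_R(\omega)\cong R$ should eventually force $\omega$ to have finite projective dimension'' is exactly where the difficulty lies; this step is not known to go through, and phrasing it as ``should'' conceals the genuine obstruction rather than naming it.

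Your type~$2$ sketch, on the other hand, can be made into an honest argument using only the paper's lemmas: under the generically Gorenstein hypothesis one has $\Ext^1_R(\Tr\omega,R)=0$ (the canonical module is torsionless), and Lemma~\ref{l32} converts the vanishing $\Ext^{>0}_R(\omega,R)=0$ into $\Ext^{\ge 2}_R(\Tr\omega,R)=0$. Thus $\omega$ is $n$-torsionfree for every $n$, and Lemma~\ref{l214} forces $R$ to be Gorenstein. This is cleaner than invoking the evaluation map directly, and it is essentially the Hanes--Huneke argument; the paper does not spell it out because it is already in \cite{HH}.
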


Inspired by this conjecture, we put the following question on nearly Gorenstein rings.

\begin{ques}
Let $R$ be a Cohen--Macaulay local ring with a canonical module $\omega$.
Assume $\m\Ext^i_R(\omega,R)=0$ for all $i>0$.
Then is it true that $R$ is nearly Gorenstein?
\end{ques}

The following theorem gives a partial answer for the above question in the case of generically Gorenstein and Cohen--Macaulay type at most $2$.

\begin{thm}
Let $R$ be a $d$-dimensional Cohen--Macaulay local ring of Cohen--Macaulay type at most $2$ with a canonical module $\omega$.
Then $R$ is nearly Gorenstein if and only if $R$ is generically Gorenstein and $\m\Ext^i_R(\omega,R)=0$ for $i=1,\dots,d$.
\end{thm}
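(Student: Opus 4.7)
The plan is to prove each direction of the equivalence separately, relying on Corollary \ref{c14}, Lemma \ref{l32}, Lemma \ref{l214}, and Proposition \ref{1}.

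For the forward implication, assume $R$ is nearly Gorenstein, so $\m\subseteq\tr\omega$. Since $V(\tr\omega)$ coincides with the non-Gorenstein locus of $R$ (cf.\ \cite[Lemma 1.4]{D}), this containment forces $R_\p$ to be Gorenstein for every $\p\ne\m$; in particular $R$ is generically Gorenstein. The Ext-vanishings then follow at once from Corollary \ref{c14}, since $\m\subseteq\tr\omega\subseteq\ann\Ext^i_R(\omega,R)$ for all $i>0$.

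For the converse, assume $R$ is generically Gorenstein and $\m\Ext^i_R(\omega,R)=0$ for $i=1,\dots,d$. The type one case is Gorenstein and trivial, so I would reduce to the case when the type equals two. The crucial step is to show that $R$ is locally Gorenstein on the punctured spectrum; once this is established, Proposition \ref{1} yields $\tr\omega=\ann\Ext^{d+1}_R(\Tr\omega,R)$, Lemma \ref{l32} identifies this group with $\Ext^d_R(\omega,R)$, and the hypothesis immediately gives $\m\subseteq\tr\omega$. To prove the punctured-spectrum assertion, fix $\p\ne\m$ and choose $x\in\m\setminus\p$; since $x$ annihilates each $\Ext^i_R(\omega,R)$ for $i=1,\dots,d$ and is a unit in $R_\p$, we obtain $\Ext^i_{R_\p}(\omega_\p,R_\p)=0$ for $i=1,\dots,d$. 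If $R_\p$ were not Gorenstein, then $\mu_{R_\p}(\omega_\p)=2$ (using $\mu_{R_\p}(\omega_\p)\le\mu_R(\omega)=2$), and $R_\p$ would remain generically Gorenstein, so $\omega_\p$ is isomorphic to an ideal of $R_\p$ containing a nonzerodivisor. In particular $\omega_\p$ is torsionless, giving $\Ext^1_{R_\p}(\Tr\omega_\p,R_\p)=0$, while Lemma \ref{l32} applied to $R_\p$ shows $\Ext^{j}_{R_\p}(\Tr\omega_\p,R_\p)=0$ for $j=2,\dots,d+1$. Combining these vanishings yields $(d+1)$-torsionfreeness of $\omega_\p$; since $\height\p\le d-1$, Lemma \ref{l214} then forces $R_\p$ to be Gorenstein, a contradiction.

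The main obstacle I expect is the punctured-spectrum reduction: one must verify that the hypotheses of Lemma \ref{l32} (type at most two and generic Gorensteinness) descend under localization, establish torsionlessness of $\omega_\p$, and carefully combine the isomorphism of Lemma \ref{l32} with the criterion of Lemma \ref{l214} to exceed the torsionfreeness threshold for $R_\p$. Everything else is then a direct assembly of results already proved earlier in the paper.
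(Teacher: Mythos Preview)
Your proposal is correct and follows essentially the same route as the paper's proof: for the forward direction both use Corollary \ref{c14} (plus the non-Gorenstein-locus description of $\tr\omega$ for generic Gorensteinness), and for the converse both localize to kill the Ext modules, invoke Lemma \ref{l32} to convert to $\Ext^{j}_{R_\p}(\Tr\omega_\p,R_\p)$, apply Lemma \ref{l214} to get Gorensteinness on the punctured spectrum, and finish with Proposition \ref{1} together with Lemma \ref{l32}. In fact you are slightly more careful than the paper: you explicitly check that $R_\p$ still has type $2$ and is generically Gorenstein before applying Lemma \ref{l32}, and you verify $\Ext^1_{R_\p}(\Tr\omega_\p,R_\p)=0$ (torsionlessness of $\omega_\p$) so that the full range $j=1,\dots,d+1$ needed in Lemma \ref{l214} is covered—points the paper leaves implicit.
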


\begin{proof}
The ``only if" part follows by Lemma \ref{c14}.
We now aim to show the ``if" part.
Suppose $\m\Ext^i_R(\omega,R)=0$ for $i=1,\dots,d$.
Take a non-maximal prime ideal $\p$.
Then $\Ext^i_{R_\p}(\omega_{\p},R_\p)=0$ for all $i=1,\dots,d$.
By our assumption, $R_\p$ is generically Gorenstein.
Thus it follows from Lemma \ref{l32} that $\Ext^{i+1}_{R_\p}(\Tr \omega_{\p},R_\p)=0$ for all all $i=1,\dots,d$.
By Lemma \ref{l214}, it yields that $R_\p$ is Gorenstein.
Consequently, $R$ is Gorenstein on the punctured spectrum.
Then by Lemma \ref{l32} and Proposition \ref{1}, we obtain equalities
$$\tr \omega=\ann \Ext^{d+1}_R(\Tr \omega,R)=\ann \Ext^d_R(\omega,R)\supseteq\m.$$
It means that $R$ is nearly Gorenstein.
\end{proof}

\section{Weakly almost Gorenstein rings}

Let $R$ be a Cohen--Macaulay local ring with a canonical module $\omega$.
In the rest of this section, we denote by $r$ the Cohen--Macaulay type of $R$.
Recall that there is another notion of almost Gorenstein rings \cite{GTT}.

\begin{dfn}[Goto--Takahashi--Taniguchi]
$R$ is almost Gorenstein if there exists a short exact sequence
$$0\to R\to \omega \to C \to 0$$
such that $\mu(C)=\mathrm{e}(C)$.
Here $\mu(C)$ is the number of elements in a minimal system of generators of $C$, and $\mathrm{e}(C)$ is the multiplicity of $C$ with respect to $\m$.
We remark that if $\dim R=1$, then $C$ should be isomorphic to $k^{\oplus r-1}$.
\end{dfn}

Remark that almost Gorenstein rings are introduced by Barucci and Fr\"oberg \cite{BF} in the case where the local ring is analytically unramified and has dimension one.
Goto, Matsuoka and Phuong \cite{GMP} extended the notion to arbitrary local rings of dimension one.
The definition above coincides with Goto-Matsuoka-Phuong's definition when the local ring has dimension one and infinite residue field (\cite[Proposition 3.4]{GTT}).
We relax the above definition for small dimensions and introduce the notion of weakly almost Gorenstein rings.

\begin{dfn} \label{d52}
Let $(R,\m,k)$ be a Cohen--Macaulay local ring of Krull dimension at most $1$ with a canonical module $\omega$.
We say that $R$ is \textit{weakly almost Gorenstein} if there exists an exact sequence
$$R\to \omega \to k^{\oplus n} \to 0$$
for some $n\ge 0$.
\end{dfn}

One-dimensional almost Gorenstein rings are weakly almost Gorenstein.

\begin{prop} \label{p54}
Let $R$ be a Cohen--Macaulay local ring with a canonical module $\omega$ and Krull dimension $1$.
Then $R$ is almost Gorenstein  if and only if $R$ is weakly almost Gorenstein.
\end{prop}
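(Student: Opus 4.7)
The plan is to handle the two directions separately, with essentially all the substantive content living in the ``if'' direction. The ``only if'' direction just unpacks definitions: given a defining sequence $0\to R\to\omega\to k^{\oplus r-1}\to 0$ for an almost Gorenstein ring of dimension one, this is a fortiori a sequence of the form required by Definition~\ref{d52}.

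For the ``if'' direction, my strategy is to show that in Krull dimension one the homomorphism $R\to\omega$ appearing in any defining weakly almost Gorenstein exact sequence $g\colon R\to\omega\to k^{\oplus n}\to 0$ is automatically injective; the resulting short exact sequence $0\to R\to \omega\to k^{\oplus n}\to 0$ will then witness that $R$ is almost Gorenstein. Set $I=\ker g$, so that $0\to R/I\to\omega\to k^{\oplus n}\to 0$ is exact.

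First I would localize at an arbitrary minimal prime $\p$ of $R$. Because $\dim R=1$, $\p\neq\m$, so the finite-length cokernel $k^{\oplus n}$ dies on localization, and the sequence collapses to a surjection $R_{\p}\twoheadrightarrow R_{\p}/I_{\p}\cong\omega_{\p}$. The pivotal observation is that $R_{\p}$ is Artinian local and $\omega_{\p}$ is its canonical module; hence by Matlis duality $\omega_{\p}$ has the same $R_{\p}$-length as $R_{\p}$. A surjection between $R_{\p}$-modules of equal finite length is forced to be an isomorphism, so $I_{\p}=0$. Running over all minimal primes $\p$, the ideal $I\subseteq R$ is supported only at $\m$, hence has finite length as an $R$-module. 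Since $R$ is Cohen--Macaulay of dimension one, its socle $\Hom_R(k,R)$ vanishes and $R$ admits no nonzero finite-length submodule, so $I=0$ and $g$ is injective, as required.

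The main conceptual step (and the only real obstacle I foresee) is the length comparison at minimal primes: this implicitly forces $R$ to be generically Gorenstein from the mere existence of a weakly almost Gorenstein sequence, and it is this observation that lets one rule out a nontrivial kernel. Without it, trying to prove injectivity of $g$ by direct manipulations of the given sequence seems unpromising; with it, the remainder of the argument — that a finite-length submodule of a depth-one ring must vanish — is routine.
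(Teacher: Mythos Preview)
Your proof is correct and follows the same line as the paper's: the key step in the ``if'' direction is that the map $R\to\omega$ is injective, which the paper obtains by citing \cite[Lemma~3.1]{GTT}, while you supply a self-contained argument via localization at minimal primes and the length equality $\ell_{R_\p}(R_\p)=\ell_{R_\p}(\omega_\p)$. Your argument is essentially the one-dimensional instance of the standard proof of that cited lemma.
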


\begin{proof}
The ``only if" part is clear.
Suppose $\dim R>0$ and there is an exact sequence $R\xrightarrow[]{\varphi} \omega \to k^{\oplus n} \to 0$.
Then $\varphi$ is injective by \cite[Lemma 3.1]{GTT}.
Therefore $R$ is almost Gorenstein.
\end{proof}

We can also see another simple example as follow.

\begin{ex}
Let $(R,\m)$ be an artinian local ring with $\m^2=0$ (for example, $R=k[x_1,\dots,x_n]/(x_1,\dots,x_n)^2$, where $k$ is a field).
Then $R$ is weakly almost Gorenstein.
Indeed, let $\omega$ be a canonical module of $R$.
Then we have an equality $\m^2\omega=0$, and hence $\soc \omega=\m\omega$.
Since $\soc \omega$ has length one, we can define a surjective homomorphism $s\colon R\to \soc \omega$.
Composing $s$ with an inclusion $\soc \omega \to \omega$, we get a homomorphism $\varphi\colon R\to \omega$, whose image contains $\soc \omega=\m\omega$.
It follows that $\m \cok \varphi=0$, i.e. $\cok\varphi\cong k^{\oplus n}$ for some $n$.
\end{ex}

Now we give a proof of Theorem \ref{thC}.

\begin{proof}[Proof of Theorem \ref{thC}]
(1) $\iff$ (2): This equivalence is shown in Proposition \ref{p54}.

(2) $\implies$ (3): Fix a regular element $x$.
Tensoring $R/(x)$ with the exact sequence $R\to\omega \to k^{\oplus n} \to 0$ over $R$, we get an exact sequence $R/(x) \to \omega_{R/(x)} \to k^{\oplus n} \to 0$.
Thus $R/(x)$ is weakly almost Gorenstein.
The implication (3)$\implies$(4) is clear.
(4) $\implies$ (2): Assume $x\in\m^2$ is a regular element, and $R/(x)$ is weakly almost Gorenstein.
Then there exists an element $a\in \omega_{R/(x)}$ with an isomorphism $\omega_{R/(x)}/(a)\cong k^{\oplus n}$.
Pick a lift $b\in\omega_R$ of $a$ along the natural surjection $\omega \to \omega/x\omega=\omega_{R/(x)}$.
It follows that $$\omega/((b)+x\omega)\cong(\omega/(b))\otimes_R (R/(x))\cong \omega_{R/(x)}/(a)\cong k^{\oplus n}.$$
Hence we obtain an inequality $(b)+x\omega \supseteq \m\omega$.
Since $x\in\m^2$, Nakayama's lemma implies that $(b)\supseteq \m\omega$.
Thus $R$ is weakly almost Gorenstein.
\end{proof}

The following implication is found in \cite[Proposition 6.1]{HHS}.

\begin{thm}[Herzog--Hibi--Stamate \cite{HHS}]
Let $R$ be a Cohen--Macaulay local ring with a canonical module $\omega$.
Assume $R$ is almost Gorenstein and has Krull dimension one.
Then $R$ is nearly Gorenstein.
\end{thm}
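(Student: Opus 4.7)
The plan is to verify the Ext-annihilator criterion supplied by Corollary \ref{5}: that $R$ is nearly Gorenstein precisely when $\m \cdot \Ext^1_R(\cm(R), R) = 0$. So it suffices to show that $\m$ kills $\Ext^1_R(M, R)$ for every maximal Cohen--Macaulay module $M$.

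To this end, I would first write down the defining short exact sequence of the almost Gorenstein property. By the remark inside the Goto--Takahashi--Taniguchi definition, in dimension one the cokernel is forced to be isomorphic to $k^{\oplus (r-1)}$, so we have
\[
0 \to R \to \omega \to C \to 0
\]
with $\m \cdot C = 0$.

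Next, for an arbitrary $M \in \cm(R)$, I would apply the functor $\Hom_R(M, -)$ to this sequence and isolate the fragment
\[
\Hom_R(M, C) \to \Ext^1_R(M, R) \to \Ext^1_R(M, \omega).
\]
The right-hand term vanishes, because $M$ is maximal Cohen--Macaulay and $\omega$ is the canonical module, so $\Ext^1_R(M, R)$ appears as a homomorphic image of $\Hom_R(M, C)$. Since $\m$ annihilates $C$, it annihilates $\Hom_R(M, C)$, and hence also its quotient $\Ext^1_R(M, R)$. Letting $M$ vary over $\cm(R)$ and invoking Corollary \ref{5} then yields that $R$ is nearly Gorenstein.

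I do not anticipate any substantive obstacle: the argument is a one-step diagram chase, relying only on the standard vanishing $\Ext^{>0}_R(\cm(R), \omega) = 0$ together with the annihilator characterization of nearly Gorensteinness established in Theorem \ref{m}. A slightly more roundabout route would invoke Proposition \ref{p54} to pass from almost Gorenstein to weakly almost Gorenstein and then exploit Theorem \ref{thC}, but the direct approach above is cleaner and makes the role of the $\Ext$-vanishing transparent.
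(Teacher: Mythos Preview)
The paper does not supply its own proof of this statement; it is simply quoted as \cite[Proposition 6.1]{HHS}. Your argument is correct and in fact gives a self-contained proof using the tools developed in the present paper, namely Corollary~\ref{5}. The original Herzog--Hibi--Stamate argument works directly at the level of the trace ideal, whereas your route passes through the Ext-annihilator characterization: one application of $\Hom_R(M,-)$ to the almost Gorenstein sequence, together with the standard vanishing $\Ext^1_R(M,\omega)=0$ for $M\in\cm(R)$, immediately forces $\m\Ext^1_R(M,R)=0$. This is a pleasant illustration of Theorem~\ref{m} and arguably the most natural proof from the paper's point of view.
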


There exists a weakly almost Gorenstein ring which is not nearly Gorenstein (see Example \ref{ex5}).
On the other hand, for local rings with type two, an analogue of the result of Herzog--Hibi--Stamate holds as follows.

\begin{thm} \label{th5}
Let $R$ be a Cohen--Macaulay local ring of Cohen--Macaulay type two with a canonical module $\omega$.
Assume $R$ is weakly almost Gorenstein.
Then $R$ is nearly Gorenstein.
\end{thm}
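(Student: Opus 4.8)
The plan is to reduce to the Artinian case and there to extract from the weak almost Gorenstein condition enough structure to control $\Hom_R(\omega,R)$. If $\dim R=1$ then $R$ is almost Gorenstein by Proposition~\ref{p54}, hence nearly Gorenstein by \cite[Proposition~6.1]{HHS}. So I would assume $\dim R=0$ and $R$ not Gorenstein; write $k$ for the residue field, $\omega=\E_R(k)$, and $(-)^{\vee}=\Hom_R(-,\omega)$ for Matlis duality, so that $\omega^{\vee}\cong R$, $\ell_R(\omega)=\ell_R(R)$ and $\cm R=\mod R$. By definition the goal is $\m\subseteq\tr\omega$, and I will use that $\tr\omega=\sum_{g\in\Hom_R(\omega,R)}\im g$.

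Next I would extract the structure. Fix an exact sequence $R\xrightarrow{\pi}\omega\to k^{\oplus n}\to 0$, and set $v=\pi(1)$, $L=\im\pi=Rv$, $I=\ann_R v=\ker\pi$. Then $\omega/L\cong k^{\oplus n}$ (so $\m\omega\subseteq L$), $L\cong R/I$, and $\ell_R(I)=\ell_R(\omega)-\ell_R(L)=n$. Since $\mu(\omega)$ equals the type $2$, the quotient $k^{\oplus n}$ of $\omega$ forces $n\le 2$, and $I\subseteq\soc R$: this is automatic for $n\le 1$, while for $n=2$ the submodules $L$ and $\m\omega$ of $\omega$ have the same colength $2=\mu(\omega)$, hence $L=\m\omega$, so $\soc R\cdot v\subseteq\soc R\cdot\m\omega=0$ and $I=\soc R$. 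As $R$ is not Gorenstein, $\pi$ cannot be injective (else $\omega\cong R$), so $I\ne 0$, whence $(0:_R I)=\m$. Now for any $g\in\Hom_R(\omega,R)$ one has $g(v)\in\im g\subseteq\tr\omega$; identifying $\Hom_R(L,R)=\Hom_R(R/I,R)=(0:_R I)=\m$ via $h\mapsto h(v)$, the set $\{g(v):g\in\Hom_R(\omega,R)\}$ is the image of the restriction map $\rho\colon\Hom_R(\omega,R)\to\Hom_R(L,R)$. So it suffices to show $\rho$ is surjective; by the long exact sequence obtained by applying $\Hom_R(-,R)$ to $0\to L\to\omega\to k^{\oplus n}\to 0$, this amounts to the vanishing of the connecting map $\Hom_R(L,R)\to\Ext^1_R(k^{\oplus n},R)$, i.e. to the statement that every $R$-homomorphism $L\to R$ extends to $\omega$.

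To carry out the extension, take $h\colon L\to R$ with $h(v)=w\in\m$, and choose a minimal generator $u$ of $\omega$ with $\omega=L+Ru$ (this uses $n\le\mu(\omega)=2$; for $n=2$ one needs two such generators, handled componentwise). Since $\m u\subseteq L$, write $ru=c_r v$ with $c_r\in R$ well defined modulo $I$; because $I\cdot w=0$, the assignment $r\mapsto c_r w$ defines an $R$-linear map $\m\to R$, and a short relation-chase shows that $h$ extends to $\omega$ exactly when this map is the restriction of an endomorphism of $R$, i.e. when it maps to $0$ in $\coker\bigl(R\to\Hom_R(\m,R)\bigr)\cong\Ext^1_R(k,R)$. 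The remaining point — and, I expect, the main obstacle — is this vanishing: I would prove it by noting that $r\mapsto\bar c_r\in R/I$ is the composite $\m\xrightarrow{\cdot u}\m u\hookrightarrow L\cong R/I$ and exploiting that $R/I$ is Artinian Gorenstein, hence self-injective, together with the two-generator presentation $R^{\oplus m}\to R^{\oplus 2}\to\omega\to 0$ and the length identity $\ell_R(\syz\omega)=\ell_R(R)$ it yields. It is exactly here that the hypothesis ``type $2$'' is essential: for larger type the analogous vanishing fails, consistent with the existence of weakly almost Gorenstein rings that are not nearly Gorenstein (Example~\ref{ex5}), so the argument must use that $\omega$ is minimally two-generated and $\soc R$ two-dimensional, in the spirit of the type-$2$ computations in Lemma~\ref{l32} and in the remark following Lemma~\ref{l22}.
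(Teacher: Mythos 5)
Your reduction is sound and, as far as it goes, correct: passing to the Artinian case via Proposition~\ref{p54}, setting $L=\im\pi\cong R/I$ with $0\neq I\subseteq\soc R$ (your case analysis for $n\le 2$ checks out), identifying $\Hom_R(L,R)$ with $(0:_RI)=\m$, and observing that $\m\subseteq\tr\omega$ would follow from surjectivity of the restriction map $\rho\colon\Hom_R(\omega,R)\to\Hom_R(L,R)$. That surjectivity is in fact true and is implicitly what the paper's proof establishes, but by a completely different mechanism: Lemma~\ref{l48} gives an explicit $2\times q$ presentation matrix $A$ of $\omega$, Vasconcelos's formula identifies $\tr\omega$ with $I_1(B)$ for the next matrix $B$ in a free resolution of $\Tr\omega$, and the relations $y_{2i}w_1=x_iw_2$ together with faithfulness of $\omega$ produce explicit elements $(x_i,-y_{2i})^T\in\ker A^T\cong\omega^*$, i.e.\ explicit maps $g_i\colon\omega\to R$ with $g_i(w_1)=x_i$.

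The genuine gap is that you never prove the surjectivity of $\rho$ (equivalently, the vanishing of the connecting map $\Hom_R(L,R)\to\Ext^1_R(k^{\oplus n},R)$), which you yourself identify as ``the main obstacle.'' The ingredients you list do not obviously assemble into a proof, and the one you lean on most --- self-injectivity of the Gorenstein quotient $R/I$ --- does not do the job in the naive way: the map $c\colon\m\to R/I$, $r\mapsto\bar c_r$, is in general \emph{not} multiplication by an element of $R/I$ (that would force $u\in Rv+\soc\omega$ with $\soc\omega$ one-dimensional, hence $\ell(\omega)\le\ell(L)+1$, which fails when $n=2$), so one cannot extend $c$ and then multiply by $w$. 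What must be shown is that the composite $r\mapsto c_rw$ extends even though $c$ itself does not, and this is exactly where the type-$2$ hypothesis enters through the shape of the relations among the two generators of $\omega$ --- the content the paper extracts from the matrix of Lemma~\ref{l48}. Your proposal stops at naming this step rather than carrying it out (and the ``handled componentwise'' remark for $n=2$ papers over a second unresolved point), so the argument is incomplete as written.
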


To prove this theorem, we need some lemmas.

\begin{lem} \label{l57}
Let $(R,\m)$ be an artinian local ring.
Then $R/\soc R$ is Gorenstein if and only if $R$ is isomorphic to $S/I$, where $(S,\n)$ is a regular local ring and $I$ is an $\n$-primary ideal $(x_1^p)+\n(x_2,\dots,x_n)$ for some $p\ge 2$ and minimal generators $x_1,\dots,x_n$ of $\n$.
In this case, $R$ has Cohen--Macaulay type $n$.
\end{lem}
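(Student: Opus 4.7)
The plan is to prove the two directions separately, with the converse being the main technical content. For the sufficiency direction $(\Leftarrow)$, I would argue by direct monomial calculation: given $R = S/I$ with $I = (x_1^p) + \n(x_2, \ldots, x_n)$ and $p \geq 2$, the relations $x_1 x_i \in I$ (for $i \geq 2$) and $x_i x_j \in I$ (for $i, j \geq 2$) reduce $R$ to the $k$-basis $\{1, x_1, \ldots, x_1^{p-1}, x_2, \ldots, x_n\}$, and a routine check shows $\soc R = (x_1^{p-1}, x_2, \ldots, x_n)/I$, an $n$-dimensional $k$-vector space, giving Cohen--Macaulay type $n$. The quotient $R/\soc R \cong S/(x_1^{p-1}, x_2, \ldots, x_n)$ is a quotient of $S$ by a regular system of parameters, hence a complete intersection and so Gorenstein.

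For the necessity direction $(\Rightarrow)$, after passing to the completion I would apply Cohen's structure theorem to write $R \cong S/I$ with $(S, \n)$ a complete regular local ring of dimension $n = \mu(\m)$ and $I \subseteq \n^2$. Setting $J := (I :_S \n)$, one has $J/I \cong \soc R$ and $S/J \cong R/\soc R$, Gorenstein artinian by hypothesis. The decisive step is to prove that $S/J$ has embedding dimension at most one; granting this, $S/J \cong k[[t]]/(t^m)$ for some $m \geq 1$, and one can choose a regular system of parameters $x_1, \ldots, x_n$ of $S$ with $J = (x_1^m, x_2, \ldots, x_n)$. The sandwich $\n J \subseteq I \subseteq J \cap \n^2$ becomes $(x_1^{m+1}) + \n(x_2, \ldots, x_n) \subseteq I \subseteq (x_1^m) + \n(x_2, \ldots, x_n)$, forcing $I = (x_1^p) + \n(x_2, \ldots, x_n)$ for some $p \in \{m, m+1\}$; a direct recomputation of $(I :_S \n)$ rules out $p = m$ when $m \geq 2$, leaving $p = m+1 \geq 2$ (with $p = 2$ when $m = 1$).

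The main obstacle is the embedding-dimension bound. I plan to argue by contradiction: suppose the embedding dimension of $S/J$ is at least $2$. Since $S/J$ is Gorenstein, its one-dimensional socle lies in $\bar\n^d$ for $d$ the socle degree, and $d \geq 2$ (otherwise $\bar\n$ would itself be the socle, forcing embedding dimension at most one). Lifting a socle generator to $c \in S$ with $c \in \n^d$, we have $c \notin J$ and $c \cdot \n \subseteq J$. The key additional input is the structural fact that for a Gorenstein artinian quotient $S/J$ of socle degree $d$ and embedding dimension at least $2$, we have $\n^{d+1} \subseteq \n J$ --- equivalently, $J$ has no minimal generator in degree $d+1$, a consequence of Macaulay's inverse system and the self-dual minimal free resolution. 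Combined with $c \cdot \n \subseteq \n^{d+1}$ this gives $c \cdot \n \subseteq \n J \subseteq I$, hence $c \in (I :_S \n) = J$, contradicting $c \notin J$.
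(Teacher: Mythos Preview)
Your approach differs substantially from the paper's. The paper does not attempt to bound the embedding dimension of $S/J$ directly; instead it observes that $J = I :_S \n$ is automatically a \emph{Burch ideal} (citing \cite[Corollary 2.4]{DKT}), and then invokes \cite[Theorem 4.4]{DKT}, which says that a Burch ideal with Gorenstein quotient must have the shape $(x_1^q, x_2, \ldots, x_n)$ for some regular system of parameters. From there the sandwich $\n J \subseteq I \subseteq \n^2 \cap J$ pins down $I$, exactly as you do. So the sufficiency direction, the type computation, and the final sandwich step are essentially the same in both arguments; the divergence is entirely in how one obtains the structure of $J$.

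Your route to that structure has a genuine gap. The claimed ``structural fact'' $\n^{d+1} \subseteq \n J$ (equivalently, $J$ has no minimal generator of $\n$-adic order $d+1$) is not a triviality, and your justification --- ``Macaulay's inverse system and the self-dual minimal free resolution'' --- does not constitute a proof. In the \emph{graded} case one can indeed argue via inverse systems: a nonzero functional on $S_{d+1}$ vanishing on $S_1 \cdot J_d$ yields a form $G$ of degree $d+1$ all of whose partial derivatives are scalar multiples of the socle form $F$, and then the integrability conditions force $F$ to depend on a single linear form, contradicting $\operatorname{edim} \geq 2$. But you are working in the \emph{local} setting, where $R$ need not be graded and need not even contain a field; the associated graded ring $\operatorname{gr}_\m(R)$ need not be Gorenstein, and Macaulay duality is not directly available. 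Self-duality of the minimal resolution gives only $\beta_i = \beta_{n-i}$ without a grading, which does not by itself bound the orders of minimal generators of $J$. You need either a genuinely local argument for $c \cdot \n \subseteq \n J$ (equivalently, that no $c x_i$ is a minimal generator of $J$), or a reduction to the graded case --- and neither is supplied. If you can close this gap, your argument would give a self-contained alternative to the paper's reliance on the Burch-ideal classification; as written, the key step is unjustified.
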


\begin{proof}
Suppose that $R/\soc R$ is Gorenstein.
Take a Cohen presentation $R=S/I$, i.e. $(S,\n)$ is a regular local ring and $I\subseteq \n^2$ is an ideal.
Then $R/\soc R$ is isomorphic to $S/(I:_S\n)$.
The ideal $I:_S\n$ is Burch (See \cite[Corollary 2.4]{DKT}), and our assumption says that $S/(I:_S\n)$ is Gorenstein.
It follows from \cite[Theorem 4.4]{DKT} that there exists minimal generators $x_1,\dots,x_n$ of $\n$ and some $q\ge 1$ such that the equality $I:_S\n=(x_1^q,x_2,\dots,x_n)$ holds.
Then by inclusions $\n(I:_S\n)\subseteq I\subseteq \n^2\cap (I:_S\n)$, we can see that $I=(x_1^{q+1})+\n(x_2,\dots,x_n)$.

Conversely, assume $R=S/I$, where $(S,\n)$ is a regular local ring and $I$ is $(x_1^p)+\n(x_2,\dots,x_n)$ for some $p\ge 2$ and minimal generators $x_1,\dots,x_n$ of $\n$.
Then an equality $I:_S \n=(x_1^{p-1})+(x_2,\dots,x_n)$ holds.
It follows that $R/\soc R$ is isomorphic to an artinian principal ideal ring $S/(I:_S\n)\cong (S/(x_2,\dots,x_n))/(x_1^{p-1})$, which is Gorenstein.

Finaly, we note that the Cohen--Macaulay type of $R$ is equal to $\ell((I:_S\n)/I)=n$.
\end{proof}

\begin{lem} \label{l58}
Let $(R,\m)$ be a weakly almost Gorenstein ring with a canonical module $\omega$.
Then there exists an exact sequence $R\xrightarrow[]{\varphi}\omega \to k^{\oplus r-1} \to 0$, where $r$ is a Cohen--Macaulay type of $R$.
In this case, the kernel of $\varphi$ is generated by $m$-elements in $\soc R$, where $m=\max\{\dim_k \soc R-1,0\}$.
\end{lem}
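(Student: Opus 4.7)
The plan is to treat the two possible dimensions separately. When $\dim R = 1$, Proposition~\ref{p54} tells us that $R$ is almost Gorenstein, which by definition produces a short exact sequence $0 \to R \to \omega \to k^{\oplus r-1} \to 0$; its kernel is zero, and since $\depth R = 1$ forces $\soc R = 0$, this matches $m = \max\{0-1,0\} = 0$. So the content of the lemma is really the artinian case.

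Assume $\dim R = 0$ and fix any $\varphi' \colon R \to \omega$ with $\cok \varphi' \cong k^{\oplus n}$ as in Definition~\ref{d52}. The condition $\m(\omega/\im \varphi') = 0$ gives $\m\omega \subseteq \im \varphi' = R\varphi'(1)$. If $\varphi'(1) \notin \m\omega$, then $\varphi'(1)$ is part of a minimal generating set of $\omega$ and a direct Nakayama count in $\omega/\m\omega$ shows that $\cok \varphi' \cong k^{\oplus r-1}$, so $n = r-1$ and we may take $\varphi = \varphi'$. Otherwise $\im \varphi' = \m\omega$, so $\m\omega$ is $R$-cyclic, say $\m\omega = Rw$. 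The key intermediate claim is the following: \emph{if $\m\omega$ is nonzero and $R$-cyclic, then some minimal generator $z$ of $\omega$ already satisfies $\m z = \m\omega$.} To see this, write $\omega = Rz_1 + \cdots + Rz_r$; each $\m z_i$ sits inside the cyclic local module $\m\omega \cong R/\ann(w)$, so if each $\m z_i$ were a proper submodule of $\m\omega$, each would lie in $\m \cdot \m\omega = \m^2\omega$, giving $\m\omega = \sum_i \m z_i \subseteq \m^2\omega$ and hence $\m\omega = 0$ by Nakayama --- a contradiction. (The corner case $\m\omega = 0$ forces $R = k$, where the lemma is trivial.) With such a $z$, define $\varphi$ by $\varphi(1) = z$; then $Rz \supseteq \m z = \m\omega$ and $z \notin \m\omega$, so the same counting argument gives $\cok \varphi \cong k^{\oplus r-1}$.

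For the kernel, let $b \in \ker \varphi = \ann_R(z)$. Since $\m\omega \subseteq Rz$, multiplication by $b$ kills $\m\omega$, whence $(b\m)\,\omega = 0$; the canonical module $\omega$ is faithful over $R$, so $b\m = 0$, i.e., $b \in \soc R$. Hence $\ker \varphi$ is a $k$-subspace of $\soc R$, and its length can be read off from $\ell_R(\omega) = \ell_R(R)$ together with the short exact sequence $0 \to \m\omega \to Rz \to k \to 0$: this gives $\ell_R(Rz) = \ell_R(\m\omega) + 1 = \ell_R(R) - r + 1$ and hence $\ell_R(\ker \varphi) = r-1$. Therefore $\ker \varphi$ is generated by $r-1 = m$ elements of $\soc R$, namely any $k$-basis. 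The main obstacle of the proof is the cyclicity-reduction claim in the second paragraph; the remaining work is length bookkeeping together with the faithfulness of $\omega$.
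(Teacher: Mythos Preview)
Your proof is correct, and in the artinian case it takes a genuinely different and more elementary route than the paper.

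Both arguments begin the same way (reduce to $\dim R=0$; if $\varphi'(1)\notin\m\omega$ we are done), and the crux is the degenerate case $\im\varphi'=\m\omega$. The paper handles this structurally: from $\ker\varphi'=\soc R$ it deduces that $R/\soc R$ has type one, invokes Lemma~\ref{l57} to pin down $R$ as $S/((x_1^p)+\n(x_2,\dots,x_r))$, then builds an explicit Gorenstein quotient $R/JR$ and dualizes the inclusion $R/JR\hookrightarrow\omega$ to produce the desired $R\to\omega\to k^{\oplus r-1}\to 0$. Your argument instead exploits only that $\m\omega$ is a nonzero cyclic module: since every proper submodule of a cyclic module over a local ring lies in its radical, if each $\m z_i$ were proper in $\m\omega$ then $\m\omega\subseteq\m^2\omega$ and Nakayama gives a contradiction; hence some minimal generator $z$ of $\omega$ already has $\m z=\m\omega$, and $\varphi(1)=z$ works. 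This bypasses Lemma~\ref{l57} and the canonical-dual construction entirely. Likewise, for $\ker\varphi\subseteq\soc R$ the paper computes $\ann(\m\omega)=\soc R$ via $\m\omega=(R/\soc R)^\dag$, whereas you get it directly from faithfulness of $\omega$. Your approach is shorter and self-contained; the paper's approach has the side benefit of revealing the explicit shape of rings for which the degenerate case $\im\varphi'=\m\omega$ actually occurs.
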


\begin{proof}
We may assume that $R$ is non-Gorenstein.
If $\dim R>0$, then the assertion follows by Theorem \ref{thC} and \cite[Corollary 3.10]{GTT}.
Thus we may suppose that $R$ is artinian.
By definition, there exists an exact sequence $R\xrightarrow[]{\varphi}\omega \to k^{\oplus n} \to 0$ for some $n$.

Then the image $\im\varphi$ contains $\m\omega$.
Taking a canonical dual $(-)^\dag$ to an exact sequence $0\to \soc R \to R \to R/\soc R\to 0$, we see that $\m\omega=(R/\soc R)^\dag$.
In particular, $\ann(\m\omega)=\soc R$.
It follows that $\ker\varphi=\ann(\im\varphi)\subseteq\ann(\m\omega)=\soc R$.
Remark that $\ell_R(\ker \varphi)=\ell(R)-\ell(\omega)+\ell(k^{\oplus n})=\ell(k^{\oplus n})=n$.
Hence once we get $n=r-1$, the latter half assertion also follows.
To show that $n=r-1$, it is enough to see that $\im\varphi\not=\m\omega$.
Thus we only have to deal with the case that $\im\varphi=\m\omega$.
In this case, $\ker\varphi=\ann\im\varphi=\soc R$.
Thus $R/\soc R=R/\ker\varphi$ is isomorphic to a submodule of $\omega$.
It means that $R/\soc R$ has Cohen--Macaulay type one.
It follows from Lemma \ref{l57} that $R$ is isomorphic to $S/I$, where $(S,\n)$ is a regular local ring and $I=(x_1^p)+\n(x_2,\dots,x_r)$ for some $p\ge 2$ and some minimal generators $x_1,\dots,x_r$ of $\n$.
We may set $R=S/I$.
Let $J=(x_1^p)+(x_2,\dots,x_s)$ be an ideal of $S$.
Then $J$ contains $I$, and contained in $I:_S\n$.
Note that $S/J$ is an artinian principal ideal ring, and hence a Gorenstein ring.
Therefore as an $R$-module, $R/JR$ has Cohen--Macaulay type one.
It then follows that there exists an injective homomorphism $\psi\colon R/JR\to \omega_R$.
The kernel of $\psi$ is $JR$, which is a submodule of $\soc R=(I:_S\n)/I$ of length $r-1$.
Thus we have an exact sequence $0\to k^{\oplus r-1}\to R \to \omega$.
Taking the canonical dual of this sequence, we get the desired exact sequence $R\to \omega \to k^{\oplus r-1} \to 0$.
\end{proof}

The following lemma is essentially same as \cite[Theorem 7.8]{GTT}.
We give a proof within our context.

\begin{lem} \label{l48}
Let $(R,\m)$ be a non-Gorenstein Cohen--Macaulay local ring with a canonical module $\omega$.
Then $R$ is weakly almost Gorenstein if and only if $\omega$ has a free presentation
$R^{\oplus q} \xrightarrow[]{A} R^{\oplus r} \to \omega \to 0$ with an $q\times r$ matrix 
$$A=\left(\begin{array}{ccccccccccccc}
y_{21} \dots y_{2n} &
y_{31} \dots y_{3n} &
\dots &
y_{r1} \dots y_{rn} &
z_1 \dots z_m\\
x_1 \dots x_n &
0 &
0 &
0 &
0\\
0&
x_1 \dots x_n &
0 &
0 &
0\\
\vdots &
\vdots &
\ddots &
\vdots &
\vdots \\
0 &
0 &
0 &
x_1 \dots x_n &
0 
\end{array}
\right),
$$
where $n=\mathrm{edim} R$, $m=\max\{\dim_k \soc R-1,0\}$, $q=(r-1)n+m$, $z_1,\dots,z_m$ are elements in $\soc R$, and $x_1,\dots,x_n$ is a system of minimal generators of $\m$.
\end{lem}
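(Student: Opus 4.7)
The plan is to prove both directions by explicitly translating between the defining sequence of a weakly almost Gorenstein structure and a presentation of $\omega$ of the stated form, using the description of $\ker\varphi$ provided by Lemma \ref{l58}.

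For the ``only if'' direction, I will apply Lemma \ref{l58} to obtain an exact sequence $R\xrightarrow{\varphi}\omega\to k^{\oplus r-1}\to 0$ with $\ker\varphi=(z_1,\dots,z_m)\subseteq\soc R$, and set $e_1:=\varphi(1)$. Since $\omega/Re_1\cong k^{\oplus r-1}$ is annihilated by $\m$, we have $\m\omega\subseteq Re_1$; comparing dimensions of $\omega/\m\omega\cong k^{\oplus r}$ and $\omega/Re_1\cong k^{\oplus r-1}$ shows $\overline{e_1}\neq 0$ in $\omega/\m\omega$, so $e_1$ extends to a minimal generating system $e_1,\dots,e_r$ of $\omega$. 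For $i\geq 2$ and each $j$, the containment $x_je_i\in\m\omega\subseteq Re_1$ supplies $y_{ij}\in R$ with $x_je_i=-y_{ij}e_1$; such a $y_{ij}$ must lie in $\m$, since otherwise $e_1$ would lie in $\m\omega$, contradicting minimality. Together with the relations $z_\ell e_1=0$, this data assembles the required $r\times q$ matrix $A$ whose columns lie in $K:=\ker(R^{\oplus r}\to\omega)$.

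The main step is to verify $\im A=K$. Given $(a_1,\dots,a_r)\in K$, the fact that $e_2,\dots,e_r$ are part of a minimal generating system forces $a_i\in\m$ for $i\geq 2$; write $a_i=\sum_j b_{ij}x_j$. Substituting into $\sum_i a_ie_i=0$ gives $(a_1-\sum_{i,j}b_{ij}y_{ij})e_1=0$, so $a_1-\sum b_{ij}y_{ij}\in\ann(e_1)=\ker\varphi=(z_1,\dots,z_m)$, and we can write $a_1=\sum b_{ij}y_{ij}+\sum_\ell c_\ell z_\ell$. This exhibits $(a_1,\dots,a_r)$ as the prescribed $R$-linear combination of columns of $A$.

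For the ``if'' direction, the relations read off the first $(r-1)n$ columns of $A$ yield $\m e_i\subseteq Re_1$ for $i\geq 2$, hence $\m\omega\subseteq Re_1$. Defining $\varphi\colon R\to\omega$ by $1\mapsto e_1$, the cokernel $\omega/Re_1$ coincides with $\omega/(Re_1+\m\omega)$, which is the quotient of $\omega/\m\omega\cong k^{\oplus r}$ by $k\overline{e_1}$; this latter subspace is nonzero because $r=\mu(\omega)$ forces $e_1,\dots,e_r$ to be minimal generators, so $\omega/Re_1\cong k^{\oplus r-1}$ and $R$ is weakly almost Gorenstein. I expect the syzygy-generation step of the forward direction to be the main technical obstacle, since it relies on both the minimality of the chosen generators and the precise identification of $\ker\varphi$ from Lemma \ref{l58}; the remaining verifications amount to straightforward dimension counts.
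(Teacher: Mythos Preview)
Your proof is correct and follows essentially the same approach as the paper: both directions hinge on Lemma~\ref{l58} to obtain the sequence $R\xrightarrow{\varphi}\omega\to k^{\oplus r-1}\to 0$ with $\ker\varphi=(z_1,\dots,z_m)\subseteq\soc R$, and the backward direction is handled identically. The only cosmetic difference is that where the paper invokes the horseshoe lemma on $0\to\im\varphi\to\omega\to k^{\oplus r-1}\to 0$ to produce the presentation in one stroke, you carry out that construction by hand and explicitly verify that the listed columns generate the full syzygy module $K$---which is exactly what the horseshoe argument encodes.
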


\begin{proof}
Suppose that $R$ is weakly almost Gorenstein.
Then by Lemma \ref{l58}, there is an exact sequence $R\xrightarrow[]{\varphi}\omega \to k^{\oplus r-1} \to 0$, and the kernel of $\varphi$ is generated by $m$-elements $z_1,\dots,z_m$ of $\soc R$.
Hence $R^{\oplus m}\xrightarrow[]{(z_1,\dots,z_m)} R \to \im\varphi \to 0$ is a free presentation.
Note that $k$ has a free presentation $R^{\oplus n}\xrightarrow[]{(x_1,\dots,x_n)} R\to k \to 0$.
Applying the horseshoe lemma to the short exact sequence $0\to \im\varphi \to \omega \to k^{\oplus r-1} \to 0$, we obtain a free presentation $R^{\oplus q}\xrightarrow[]{A}R^{\oplus r}\to \omega \to 0$ with the desired matrix $A$.

Conversely, suppose that $\omega$ has a free presentation $R^{\oplus q}\xrightarrow[]{A} R^{\oplus r}\xrightarrow[]{\varepsilon}\omega \to 0$, where $A$ is a matrix as in the statement.
Let $e_1,\dots,e_r$ be a standard $R$-basis of the free module $R^{\oplus r}$.
Then for any $i\in\{1,\dots,n\}$ and $j\in\{1,\dots,r-1\}$, $x_ie_j-y_{j+1i}e_r$ is in the image of $A$.
This means that $x_i\varepsilon(e_j)=y_{j+1i}\varepsilon(e_r)$.
Since $\omega$ is generated by $\varepsilon(e_1),\dots,\varepsilon(e_r)$, it follows that $\m\omega\subseteq (\varepsilon(e_r))$.
Let $\varphi$ be a homomorphism $R\to \omega$ sending $1$ to $\varepsilon(e_r)$.
Then we see that $\im \varphi\supseteq \m\omega$.
This implies that $\cok \varphi$ is isomorphic to a sum of copies of $k$.
It shows that $R$ is weakly almost Gorenstein.
\end{proof}

\begin{proof}[Proof of Theorem \ref{th5}]
Take a free presentation $G \xrightarrow[]{A} F \xrightarrow[]{\epsilon} \omega \to 0$ of $\omega$, where $A$ is a matrix given in Lemma \ref{l48}.
Since $R$ has Cohen--Macaulay type $2$, $F$ has rank $2$.
Then we have a free resolution $\cdots \to H \xrightarrow[]{B} F^* \xrightarrow[]{A^T} G^* \to \Tr \omega \to 0$ of $\Tr \omega$.
By \cite[Remark 3.3]{Vas}, $\tr(\omega)$ is equal to $I_1(B)$.

Let $e_1,e_2$ be a standerd basis and set $w_1=\epsilon(e_1)$ and $w_2=\epsilon(e_2)$.
For each $i$, one has an equality $y_{2i}w_1=x_iw_2$
Thus for each pair $i,j$ with $i\not=j$, the element $y_{2i}x_j-y_{2j}x_i$ annihilates $w_1$ and $w_2$. 
Since $\omega$ is faithful, one has $y_{2i}x_j-y_{2j}x_i$.
Consider vectors $v_1=(x_1,-y_{21})^T, v_2=(x_2,-y_{22})^T,\dots,v_n=(x_n,-y_{2n})^T$ in $F^*$.
It follows that $A^T(v_i)=0$, which means that the columns of $B$ generate $v_1,\dots,v_n$.
In particular, one has $I_1(B)\supseteq I_1(v_1)+\cdots I_1(v_n)\supseteq (x_1,\dots,x_n)=\m$.
We conclude that $R$ is nearly Gorenstein.
\end{proof}

Recall that an artinian local ring $R$ is called \textit{Teter} or a \textit{Teter's ring}
if it is isomorphic to $S/\soc S$ for some artinian Gorenstein local ring $S$.
It is shown in \cite[Corollary 2.2]{HV} that Teter rings are nearly Gorenstein.
We give the following characterization of weakly almost Gorenstein rings in terms of Teter rings.

\begin{thm} \label{th511}
Let $(R,\m,k)$ be an artinian local ring.
Consider the following conditions.
\begin{enumerate}[\rm(1)]
\item $R$ is weakly almost Gorenstein.
\item There exists an ideal $I\subseteq \soc R$ such that $R/I$ is Gorenstein.
\item $R/\soc R$ is Teter.
\item $R/\soc R$ has self-canonical dual maximal ideal (i.e. $\Hom_R(\m/\soc R,\omega)\cong \m/\soc R$.)
\end{enumerate}
Then implications (1)$\Leftrightarrow$(2)$\Rightarrow$(3)$\Rightarrow$(4) hold.
If $k$ is infinite, then an implication (4)$\Rightarrow$(1) also holds.
\end{thm}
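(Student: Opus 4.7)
My plan is to prove the chain $(1)\Leftrightarrow(2)\Rightarrow(3)\Rightarrow(4)\Rightarrow(1)$, where only the last implication will require the infinite residue field hypothesis. Throughout I will exploit that in the artinian case the canonical module $\omega=\omega_R$ is the injective hull $\E_R(k)$, so the functor $D:=\Hom_R(-,\omega)$ is an exact, length-preserving Matlis duality.

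For $(1)\Leftrightarrow(2)$, starting from the defining exact sequence $R\xrightarrow{\varphi}\omega\to k^{\oplus n}\to 0$, I set $I:=\ker\varphi$, which by Lemma \ref{l58} lies in $\soc R$; applying $D$ to $0\to R/I\to\omega\to k^{\oplus n}\to 0$ produces $0\to k^{\oplus n}\to R\to 0:_\omega I\to 0$, showing $\omega_{R/I}=0:_\omega I$ is cyclic and hence $R/I$ is Gorenstein. Conversely, given such an $I$, I will pick $y\in\omega$ generating $\omega_{R/I}=0:_\omega I$ and set $\varphi\colon R\to\omega$, $1\mapsto y$; then $\cok\varphi\cong D(I)\cong k^{\oplus\dim_k I}$ because $I$ is a $k$-vector space.

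For $(2)\Rightarrow(3)$, I set $S:=R/I$ and prove the key identity $(I:_R\m)=\soc R$: the inclusion $\soc R\subseteq(I:_R\m)$ is immediate, and since $S$ is Gorenstein the quotient $(I:_R\m)/I=\soc S$ is one-dimensional, so a length count together with $\soc R/I\subseteq(I:_R\m)/I$ forces $\dim_k I=\dim_k\soc R-1$ and hence equality. Then $R/\soc R=(R/I)/\soc(R/I)=S/\soc S$, exhibiting $R/\soc R$ as Teter with witness $S$. For $(3)\Rightarrow(4)$, setting $T=R/\soc R=S/\soc S$, I compute $\omega_T$ in two ways: on the $R$-side as $0:_{\omega_R}\soc R$, and on the $S$-side (using $\omega_S=S$ and the standard identity $0:_S\soc S=\m_S$ for artinian Gorenstein $S$) as $\m_S$. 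Applying the exact functor $\Hom_T(-,\omega_T)$ to the short exact sequence $0\to\soc S\to\m_S\to\m_T\to 0$ of $T$-modules and using $\End_T(\omega_T)=T$ together with $\Hom_T(k,\omega_T)=\soc\omega_T=k$ yields $0\to\Hom_T(\m_T,\omega_T)\to T\to k\to 0$, so $\Hom_T(\m_T,\omega_T)\cong\m_T$. Since $\soc R$ annihilates $\m/\soc R$, every $R$-linear map from $\m/\soc R$ to $\omega_R$ factors through $\omega_T$, giving $\Hom_R(\m/\soc R,\omega_R)\cong\m/\soc R$.

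The hardest step is $(4)\Rightarrow(1)$ under the assumption that $k$ is infinite, and here I plan to invoke Huneke-Vraciu's characterization of Teter rings \cite{HV}, which says that for artinian local $T$ with infinite residue field the self-duality $\Hom_T(\m_T,\omega_T)\cong\m_T$ is equivalent to $T$ being Teter. Applied to $T=R/\soc R$ this yields $(3)$, so the remaining task is to bridge $(3)$ to $(1)$ (equivalently to $(2)$ via the already-established equivalence). My plan is to reformulate $(1)$ as the existence of $y\in\omega_R$ with $\m y=\m\omega_R$ and to extract such a $y$ from the $T$-bilinear pairing $\m_T\otimes_T\m_T\to\omega_T$ encoded by the isomorphism in $(4)$, using the infinite residue field to select a generic element. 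The main obstacle, and the place where the infinite residue field hypothesis is genuinely needed, will be descending from this $T$-level construction back to $R$: ensuring that the resulting $y\in\omega_R$ has both the correct image in $\omega_R/\omega_T=k^r$ and the correct action $\m y=\omega_T=\m\omega_R$ requires carefully managing the compatibility between the $R$- and $T$-module structures on $\omega_R$, $\omega_T$, and the relevant $\Hom$ modules, and this is where I expect to spend the most effort.
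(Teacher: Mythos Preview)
Your arguments for $(1)\Leftrightarrow(2)$ and $(3)\Rightarrow(4)$ are correct and close to the paper's (for $(3)\Rightarrow(4)$ the paper simply cites \cite{An}, so your direct computation is more self-contained). In $(2)\Rightarrow(3)$ there is a small slip: the length count only gives $\dim_k((\soc R)/I)\le 1$, so it does not \emph{force} $\dim_k I=\dim_k\soc R-1$; the case $I=\soc R$ must be handled separately. The paper glosses over this too (``we may assume $I\ne\soc R$''), but one does need to observe, via Lemma~\ref{l57} and the construction in the proof of Lemma~\ref{l58}, that if $R/\soc R$ is Gorenstein then a strictly smaller $I'\subsetneq\soc R$ with $R/I'$ Gorenstein exists.

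The genuine gap is in $(4)\Rightarrow(1)$. Invoking \cite{HV} to get $(4)\Rightarrow(3)$ for $T=R/\soc R$ is legitimate, but it does not buy you what you need: the Gorenstein witness $S$ with $S/\soc S\cong T$ produced by the Teter condition is an \emph{external} ring, not a quotient of $R$, so there is no evident way to read off an ideal $I\subseteq\soc R$ with $R/I$ Gorenstein. Your fallback plan---extract $y\in\omega_R$ with $\m y=\m\omega_R$ from the pairing $\m_T\otimes_T\m_T\to\omega_T$---is the right target, but the pairing coming from the abstract isomorphism in $(4)$ is not a priori given by multiplication by any element of $\omega_R$, so this step needs real work, not just a generic choice. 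The paper's proof does exactly this work, and its structure is quite different from what you sketch: it takes generators $f_1,\dots,f_n$ of $\Hom_R(\m,\omega)$, restricts to maps $\bar h_i\colon X\to X$ where $X=\m/\soc R$ (using $(4)$ to identify $\m\cdot\m^\dag$ with $X$), decomposes $X$ into indecomposables, and shows via the locality of each $\End_R(X_l)$ that for each indecomposable summand some $\bar h_i$ acts as an automorphism. The infinite residue field then lets one choose $(a_1,\dots,a_n)$ avoiding finitely many proper Zariski-closed loci so that $\sum a_i\bar h_i$ is injective, which translates back to $\ker(\sum a_i f_i)\subseteq\soc R$ and hence to $(2)$. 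Your proposal does not contain this decomposition/local-endomorphism-ring argument, and I do not see how to bypass it.
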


In the proof of this theorem, we use the following lemma.

\begin{lem} \label{l51}
Let $(R,\m)$ be a local ring.
Let $\m=\m_1\oplus\m_2\oplus\cdots\oplus \m_n$ be a direct sum decomposition.
Then for any $R$-homomorphism $\alpha\colon\m_i\to\m_j$, its image $\im \alpha$ is contained in $\soc(\m_j)$ for any $i,j$ with $i\not=j$.
\end{lem}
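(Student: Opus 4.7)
The plan is to exploit the direct sum condition: distinct summands of $\m$ have zero intersection, and this forces ``cross'' products $\m_k\cdot\m_i$ to vanish when $k\neq i$. From this, any $R$-linear map $\alpha\colon\m_i\to\m_j$ with $i\neq j$ will automatically land in the socle.

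First I would record the basic observation: for $k\neq i$, $y_k\in\m_k$ and $x\in\m_i$, the product $y_k x$ lies simultaneously in $\m_k$ (since $\m_k$ is an $R$-submodule containing $y_k$) and in $\m_i$ (since $\m_i$ is an $R$-submodule containing $x$), and hence in $\m_k\cap\m_i=0$. Consequently, writing any $y\in\m$ as $y=y_1+\cdots+y_n$ with $y_k\in\m_k$, for $x\in\m_i$ only the $i$-th component can survive: $yx=y_i x$.

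Next, for the actual bound on the image, I would compute, using $R$-linearity of $\alpha$,
$$y\cdot\alpha(x)=\alpha(yx)=\alpha(y_i x)=y_i\cdot\alpha(x).$$
Now $\alpha(x)\in\m_j$ gives $y_i\alpha(x)\in\m_j$, while $y_i\in\m_i$ gives $y_i\alpha(x)\in\m_i$; because $i\neq j$, this common element must lie in $\m_i\cap\m_j=0$. Thus $\m\cdot\im\alpha=0$, which is precisely the statement $\im\alpha\subseteq\soc(\m_j)$. There is essentially no obstacle here: the argument is a direct bookkeeping exercise that uses only the direct sum hypothesis together with $R$-linearity, and requires no further hypotheses on $R$ beyond locality.
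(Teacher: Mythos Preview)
Your proof is correct and follows essentially the same approach as the paper's: both rest on the observation that $\m_k\m_i\subseteq\m_k\cap\m_i=0$ for $k\neq i$, and then use that $\im\alpha$ is simultaneously a quotient of $\m_i$ and a submodule of $\m_j$. The paper phrases this via annihilators ($\ann\im\alpha\supseteq\ann\m_i+\ann\m_j\supseteq\m$), while you unpack the same content element by element, but the underlying argument is identical.
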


\begin{proof}
For any $i\not=j$, we have $\m_i\m_j\subseteq \m_i\cap \m_j=0$.
Thus $\ann(\m_i)\supseteq \m_1+\cdots+\check{\m_i}+\cdots+\m_n$.
It follows that $\ann \im \alpha\supseteq \ann(\m_i)+\ann(\m_j)\supseteq \m_1+\cdots+\check{\m_i}+\cdots+\m_n+\m_i=\m$.
This shows that $\im\alpha\subseteq \soc(\m_j)$.
\end{proof}

We are now ready to prove Theorem \ref{th511}.

\begin{proof}[Proof of Theorem \ref{th511}]
(1)$\implies$(2): Take an exact sequence $R\xrightarrow[]{\varphi} \omega \to k^{\oplus n} \to 0$.
Then $\im\varphi$ contains $\m\omega$.
Thus $\ker\varphi=\ann\im\varphi$ is contained in $\soc R$.
Set $I:=\ker\varphi$.
Then $R/I\cong \im\varphi\subseteq \omega$, which implies that $\soc (R/I)$ has dimension one over $k$.
This means that $R/I$ is Gorenstein.
(2)$\implies$(1): Consider a short exact sequence $0\to I \to R \to R/I \to 0$.
Taking the canonical dual of this, we get an exact sequence $0\to \Hom_R(R/I,\omega)\to \omega \to \Hom_R(I,\omega) \to 0$.
Since $I$ is in $\soc R$, $\Hom_R(I,\omega)$ is isomorphic to $k^{\oplus n}$ for some $n$.
The Gorensteiness of $R/I$ implies that $\Hom_R(R/I,\omega)\cong R/I$.
Thus we obtain an exact sequence $0\to I \to R\to \omega \to k^{\oplus n}\to 0$, which shows that $R$ is weakly almost Gorenstein.
(1)$\implies$(3): We may assume $I\not=\soc R$.
Then one can see that $R/\soc R\cong (R/I)/(\soc(R/I))$.
Therefore $R/\soc R$ is Teter.
(3)$\implies$(4) See \cite{An} for instance.

Now assume $k$ is infinite.
(4)$\implies$(2): Set $(-)^\dag:=\Hom_R(-,\omega)$.
Consider an $R$-free presentation $R^{\oplus m}\to R^{\oplus n} \xrightarrow[]{(f_1,\dots,f_n)} \m^\dag \to 0$ of $\m^\dag$.
Here $f_i$ is a non-zero homomorphism $R\to \m^\dag$.
Note that $\bigcap_i \ker f_i=\ann(\m^\dag)=\ann(\m)=\soc R$.
We set $g_i:=f_i(1)$ to be a homomorphism from $\m$ to $\omega$.
The kernel $I$ of a linear combination $g:=a_1g_1+\cdots+a_ng_n\colon \m \to \omega$ with $a_i\in R$ gives a Gorenstein quotient $R/I$.
Remark that $I$ is contained in $\ker f$, where $f:=a_1f_1+\cdots+a_nf_n$.
Thus we want to find elements $a_1,\dots,a_n$ such that $\ker f$ is contained in $\soc R$.
Let $h_i$ be homomorphisms $\m\to \m\cdot\m^\dag$ which are restrictions of $f_i$, and $h$ be a sum $a_1h_1+\cdots+a_nh_n$.
Then $\m^{\oplus n}\xrightarrow[]{(h_1,\dots,h_n)} \m\cdot\m^\dag\to 0$ is exact.
We also have $\ker f=\ker h$.
Thus we aim to find elements $a_1,\dots,a_n$ such that $\ker h$ is contained in $\soc R$.
Put $X=\m/\soc R$.
Since $\soc R$ annihilates $\m^\dag$, $h_i$ induces a homomorphism $\bar{h}_i\colon X \to \m\cdot \m^\dag$ for any $i$.
There is a short exact sequence $0\to \soc R \to \m \to X \to 0$.
Applying $(-)^\dag$, we get a short exact sequence $0 \to X^\dag \to \m^\dag \to (\soc R)^\dag \to 0$.
It follows that $X^\dag\cong \m\cdot\m^\dag$.
By our assumption, $X\cong X^\dag$.
Therefore, there is an isomorphism $\varphi\colon\m\cdot\m^\dag\to X$.
Now we have endomorphisms $\varphi\circ \bar{h}_i$ of $X$.
Take a decomposition $X=X_1\oplus X_2\oplus \cdots \oplus X_m$ with indecomposables $X_i$, and assume that $X_j\cong k$ if and only if $j=s+1,\dots,m$ (we admit $s>m$).
Let $\iota_j\colon X_j\to X$ be inclusions and $p_j\colon X\to X_j$ be projections.
Set $h_{ijl}:=p_l\circ \varphi \circ\bar{h}_i \circ\iota_j\colon X_j\to X_l$.

\begin{claim} \label{cl1}
If $j\not=l$ and $l\le s$, then the image $\im h_{ijl}$ is contained in $\m X_l$ for any $i$.
\end{claim}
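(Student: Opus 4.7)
The plan is to reduce Claim \ref{cl1} to Lemma \ref{l51} by passing to the quotient ring $R' := R/\soc R$, whose maximal ideal is $\m' = \m/\soc R = X$. Since $\soc R \cdot \m = 0$, the $R$-action on $X$ factors through $R'$, so the given decomposition $X = X_1 \oplus \cdots \oplus X_m$ can be regarded simultaneously as a decomposition of $\m'$ into $R'$-submodules. Moreover, because each $X_i$ is an $R'$-module, every $R$-homomorphism $X_j \to X_l$ is automatically $R'$-linear; in particular each $h_{ijl}$ is an $R'$-homomorphism between indecomposable summands of the maximal ideal of the local ring $R'$.

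Applying Lemma \ref{l51} to $(R', \m')$ for $j \neq l$ will then give $\im h_{ijl} \subseteq \soc_{R'}(X_l)$, and since $\soc R$ already acts trivially on $X_l$ one has $\soc_{R'}(X_l) = \soc_R(X_l)$. It will then remain to verify the inclusion $\soc_R(X_l) \subseteq \m X_l$ whenever $X_l \not\cong k$, i.e., whenever $l \le s$.

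To establish this last inclusion, I would argue by contradiction. If $y \in \soc_R(X_l) \setminus \m X_l$, then the image of $y$ in the $k$-vector space $X_l/\m X_l$ is nonzero, so extending to a basis and projecting onto its coordinate produces an $R$-linear surjection $\pi \colon X_l \twoheadrightarrow k$ with $\pi(y) = 1$. Since $\m y = 0$, the map $k \to X_l$ sending $1 \mapsto y$ is well-defined, and $\pi$ composed with it equals $\id_k$; hence $k$ splits off as a direct summand of $X_l$, contradicting the indecomposability of $X_l$ together with $X_l \not\cong k$. Combining these two steps will yield $\im h_{ijl} \subseteq \soc_R(X_l) \subseteq \m X_l$.

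The only delicate point is the initial change of scalars: one must check that passing to $R'$ preserves both the direct sum decomposition and the relevant homomorphism set. I do not foresee any substantial obstacle, since the argument essentially amounts to applying Lemma \ref{l51} on the correct quotient ring together with a short indecomposability argument.
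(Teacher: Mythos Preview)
Your proposal is correct and follows essentially the same route as the paper: apply Lemma~\ref{l51} to the local ring $R/\soc R$ whose maximal ideal is $X$, and then use indecomposability of $X_l$ together with $X_l\not\cong k$ to deduce $\soc(X_l)\subseteq \m X_l$. You have simply made explicit the change of scalars and the splitting argument that the paper leaves implicit.
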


\begin{proof}[Proof of Claim \ref{cl1}]
Note that $X=\m/\soc R$ is a maximal ideal of $R/\soc R$.
By Lemma \ref{l51}, $\im h_{ijl}$ is contained in $\soc(X_l)$.
Since $X_l$ is indecomposable and not isomorphic to $k$, $\soc(X_l)$ is contained in $\m X_l$.
\end{proof}

\begin{claim} \label{cl2}
For any $l=1,\dots,s$, there is an index $i$ such that $h_{ill}\colon X_l\to X_l$ is an automorphism.
\end{claim}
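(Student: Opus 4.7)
The plan is to isolate the contribution of the $l$-th summand by means of Claim \ref{cl1} and then invoke Nakayama's lemma twice: once over $R$ and once over $E:=\End_R(X_l)$.

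First I would verify that the aggregate map
\[
\Phi\colon X^{\oplus n}\longrightarrow X,\qquad (y_1,\dots,y_n)\mapsto \sum_{i=1}^n \varphi(\bar h_i(y_i))
\]
is surjective. This is immediate from the construction: the $f_i$ generate $\m^\dag$, hence the $h_i$ generate $\m\cdot\m^\dag$; passing to $X=\m/\soc R$ yields a surjection $(\bar h_1,\dots,\bar h_n)\colon X^{\oplus n}\to\m\cdot\m^\dag$, and $\varphi$ is an isomorphism.

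Composing with the projection $p_l$ gives a surjection $p_l\circ\Phi\colon X^{\oplus n}\to X_l$. Decomposing $X^{\oplus n}=\bigoplus_j X_j^{\oplus n}$ along the inclusions $\iota_j^{\oplus n}$, Claim \ref{cl1} tells me that for every $j\ne l$ the block $p_l\circ\Phi\circ\iota_j^{\oplus n}$ lands in $\m X_l$. Combining these two observations produces
\[
\sum_{i=1}^n h_{ill}(X_l) + \m X_l = X_l,
\]
and Nakayama's lemma over $R$ upgrades this to $\sum_{i=1}^n h_{ill}(X_l)=X_l$.

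To promote joint surjectivity to the existence of a single $i$ for which $h_{ill}$ is an automorphism, I would use the local structure of $E:=\End_R(X_l)$: since $X_l$ is an indecomposable $R$-module of finite length, $E$ is local and its maximal ideal $\mathfrak n_E$ consists precisely of the non-automorphisms; moreover $X_l$ is finitely generated over $E$, because every $E$-submodule is an $R$-submodule so $X_l$ has finite $E$-length. If every $h_{ill}$ belonged to $\mathfrak n_E$, each image $h_{ill}(X_l)$ would lie in $\mathfrak n_E X_l$, and hence so would their sum; but $\mathfrak n_E X_l\subsetneq X_l$ by Nakayama over $E$, contradicting the surjectivity just established. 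Therefore some $h_{ill}$ is a unit in $E$, i.e.\ an automorphism of $X_l$.

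The main subtlety to check carefully is the second Nakayama step: one must verify that $X_l$ is finitely generated as an $E$-module (so that $\mathfrak n_E X_l$ is a proper submodule), for otherwise the contradiction at the end would not be available. Apart from that, everything reduces to bookkeeping built on Claim \ref{cl1} and the surjectivity of $(h_1,\dots,h_n)\colon \m^{\oplus n}\to \m\cdot\m^\dag$.
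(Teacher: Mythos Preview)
Your proof is correct and follows essentially the same route as the paper's: both establish $\sum_i h_{ill}(X_l)=X_l$ via Claim~\ref{cl1} and Nakayama over $R$, then derive a contradiction from the locality of $\End_R(X_l)$ and a second application of Nakayama's lemma. Your explicit attention to the finite generation of $X_l$ over $E$ is a helpful clarification that the paper leaves implicit.
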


\begin{proof}
Fix an integer $l\in\{1,\dots,s\}$.
Since $(\bar{h}_1,\dots,\bar{h}_n)\colon X^{\oplus n}\to \m\cdot\m^\dag$ is surjective,
we obtain an equality $\sum_{i,j}\im h_{ijl}=X_l$.
By Claim \ref{cl1}, $\im h_{ijl}$ is contained in $\m X_l$ for any $i,j$ with $j\not=l$.
It follows by Nakayama's lemma that $\sum_i \im h_{ill}=X_l$.
Suppose that $h_{ill}$ is not an automorphism for any $i$.
As $X_l$ is indecomposable, $\End_R(X_l)$ is local.
Thus $h_{ill}$ is contained in $J:=\mathrm{Jac}(\End_R(X_l))$, the Jacobson radical of $\End_R(X_l)$.
This yields that $\im h_{ill}\subseteq JX_l$.
It follows that $\sum_i \im h_{ill}\subseteq JX_l$.
By Nakayama's lemma, it implies that $\sum_i\im h_{jll} \not= X_l$, a contradiction.
Therefore there is an index $i$ with an automorphism $h_{ill}$.
\end{proof}

In particular, Claim \ref{cl2} shows that the Zariski open set 
$$U_j:=\{(b_1,\dots,b_n)\in k^{\oplus n}\mid \det(b_1(h_{1jj}\otimes_R k)+\cdots+b_n(h_{njj}\otimes_R k))\not=0\}$$
 is non-empty for any $j=1,\dots,s$.
Since $\bigcap_i \ker h_i=\bigcap_i \ker f_i=\soc R$, one has $\bigcap_i \ker \bar{h}_i=0$.
From this equality and an isomorphism $X_j\cong k$ for $j>s$, we see that for any $j>s$, there exists $i(j)$ such that $\bar{h}_{i(j)}\circ \iota_l\colon X_j\to \m\cdot\m^\dag$ is injective.
Now we set $U_j:=\{(b_1,\dots,b_n)\in k^{\oplus n}\mid b_{i(j)\not=0}\}$ for any $j>s$.
Then $U_j$ is non-empty Zariski open set for any $j=1,\dots,s,s+1,\dots m$.
Since $k$ is infinite, $U:=U_1\cap U_2\cap\cdots\cap U_m$ is also non-empty.
Take an element $(a_1,\dots,a_n)\in R^{\oplus n}$ such that its image in $(R/\m)^{\oplus n}=k^{\oplus n}$ belongs to $U$.
Set a homomorphism $\bar{h}=a_1\bar{h}_1+\cdots+a_n\bar{h}_n$ from  $X$ to $\m\cdot\m^\dag$.
Then by the definition of $U$, $(p_j\circ\varphi\circ\bar{h}\circ \iota_j)\otimes_R k$ is an automorphism on $X_j/\m X_j$ for any $j=1,\dots,s$.
This yields that $p_j\circ\varphi\circ\bar{h}\circ \iota_j$ is also an automorphism on $X_j$.
This particularly shows that $\bar{h}\circ \iota_j$ is injective.
On the other hand, $\bar{h}\circ \iota_j\colon X_j\to \m\cdot\m^\dag$ is injective for $j>s$.
Thus $\bar{h}$ is injective on any summand of $X$.
This says that $\bar{h}$ is injective.
Now we get an equality $\ker(a_1h_1+\cdots+a_nh_n)=\soc R$, which completes the proof.
\end{proof}

\begin{cor} \label{c5}
Let $(S,\n)$ be a local ring and $I\subseteq \n^2$ be an $\n$-primary ideal of $S$.
Assume that $S/I$ is Gorenstein.
Then for any ideal $J$ of $S$ such that $\n I\subseteq J\subseteq I$, $S/J$ is weakly almost Gorenstein.
\end{cor}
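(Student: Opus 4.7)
The plan is to deduce the result as a direct application of the equivalence (1) $\Leftrightarrow$ (2) in Theorem \ref{th511}. Set $R:=S/J$ and consider the ideal $I':=I/J$ of $R$. The goal is to check that $I'\subseteq\soc R$ and that $R/I'$ is Gorenstein, and then invoke Theorem \ref{th511}.

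First I would verify that Theorem \ref{th511} is applicable, i.e. that $R$ is an artinian local ring. Since $I$ is $\n$-primary, some power $\n^k$ lies in $I$, so $\n^{k+1}\subseteq \n I\subseteq J$. Hence $J$ is $\n$-primary too, and $R=S/J$ is a local artinian ring with maximal ideal $\m:=\n/J$; in particular it admits a canonical module.

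Next I would check the two properties of $I'$. For the socle condition, the inclusion $\n I\subseteq J$ gives
\[
\m\cdot I' \;=\; \frac{\n I+J}{J} \;=\; \frac{J}{J} \;=\; 0,
\]
so $I'\subseteq \soc R$. For the Gorenstein quotient, the third isomorphism theorem yields
\[
R/I' \;=\; (S/J)\big/(I/J) \;\cong\; S/I,
\]
which is Gorenstein by hypothesis.

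Having produced an ideal $I'\subseteq \soc R$ with $R/I'$ Gorenstein, condition (2) of Theorem \ref{th511} is satisfied, so by (2)$\Rightarrow$(1) the ring $R=S/J$ is weakly almost Gorenstein. There is no substantive obstacle here — the only routine checks are that $J$ remains $\n$-primary (handled by the containment $\n I\subseteq J$ together with $\n$-primariness of $I$) and the computation $\m I'=0$, both of which are immediate.
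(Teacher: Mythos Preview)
Your proof is correct and follows the same overall strategy as the paper: set $R=S/J$, verify condition (2) of Theorem \ref{th511} for the ideal $I/J$ of $R$ (namely $I/J\subseteq\soc R$ and $R/(I/J)\cong S/I$ Gorenstein), and conclude. The difference lies in how the socle containment is checked. You observe it is immediate from the hypothesis $\n I\subseteq J$, since this gives $\m\cdot(I/J)=(\n I+J)/J=0$. The paper instead invokes the theory of Burch ideals from \cite{DKT}: since $I\subseteq\n^2$ and $S/I$ is Gorenstein, $I$ is not Burch, whence $\n I:_S\n=I:_S\n$, and combining with $\n I\subseteq J\subseteq I$ yields the stronger equality $J:_S\n=I:_S\n$. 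That extra strength (identifying the full socle of $S/J$, not just showing it contains $I/J$) is not needed for the corollary, so your argument is more elementary and more economical; the paper's route additionally requires the reduction to $\dim S\ge 2$ for the Burch machinery, which you avoid entirely.
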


\begin{proof}
We may assume that $\dim S\ge 2$.
Fix an ideal $J$ with inclusions $\n I\subseteq J\subseteq I$.
Set $R=S/J$.
Then since $I\subseteq \n^2$ and $S/I$ is Gorenstein, $I$ is not a Burch ideal \cite[Theorem 4.4]{DKT}.
It means that an equality $\n I:_S \n=I:_S\n$ holds.
Then we see from the inclusions $\n I:_S \n\subseteq J:_S\n\subseteq I:_S\n=\n I:_S \n$ that $I\subseteq I:_S\n=J:_S\n$.
This yields that $IR$ is an ideal of $R$ contained in $\soc R=(J:_S\n)/J$.
Note that $R/IR=S/I$ is Gorenstein by the assumption.
Thus Theorem \ref{th511} shows that $R$ is weakly almost Gorenstein.
\end{proof}

Using Corollary \ref{c5}, we can provide many examples of weakly almost Gorenstein rings.

\begin{ex} \label{ex5}
Let $(S,\n)$ be a power series ring $k[\![x,y]\!]$ of two variables over a field $k$.
Set $I=(x^2,y^2)$.
Then $S/I$ is Gorenstein.
Put $J=(x,y)^3$.
Then an equality $J=\n I$ holds, and hence $S/J$ is weakly almost Gorenstein.
We remark that $S/J$ is not nearly Gorenstein by \cite[Theorem 3.2]{An2}.
\end{ex}

\begin{ex} \label{e51}
Let $S$ be a power series ring $k[\![x,y,z]\!]$ of three variables over a field $k$.
Let $\n$ denote the maximal ideal $(x,y,z)S$ of $S$.
Fix integers $a>1,b>1,c>1$.
Let $I$ be a monomial ideal $(x^a,y^b,z^c)$.
Note that $S/I$ is Gorenstein.
Assume $J$ is an ideal of one of the following forms.
\begin{enumerate}[\rm(a)]
\item $(x^a,y^b)+\n(z^c)=(x^a,y^b,xz^c,yz^c,z^{c+1})$.
\item $(x^a)+\n(y^b,z^c)=(x^a,xy^b,y^{b+1},y^bz,xz^c,yz^c,z^{c+1})$.
\item $\n I=(x^{a+1},x^ay,x^az,xy^b,y^{b+1},y^bz,xz^c,yz^c,z^{c+1})$.
\end{enumerate}
Then $J$ satisfies $\n I\subseteq J\subseteq I$, and hence $S/J$ is weakly almost Gorenstein by Corollary \ref{c5}.
\end{ex}

\begin{ex} \label{e52}
Let $S$ be a power series ring $k[\![x,y,z]\!]$ of three variables over a field $k$.
Let $\n$ denote the maximal ideal $(x,y,z)S$ of $S$.
Fix an integer $m\ge 2$.
L. Christensen, O. Veliche and J. Weyman \cite[Proposition 3.3]{CVW} give an $\n$-primary Gorensten ideal $\mathfrak{g}_m\subseteq \n^2$ minimally generated by $2m+1$ elements.
Let $g$ be one of the minimal generators of $\mathfrak{g}_m$ listed in \cite[Proposition 3.3]{CVW}, let $\mathfrak{b}$ be the ideal generated by remaining generators of $\mathfrak{g}_m$, and put $\mathfrak{a}=\mathfrak{n}g+\mathfrak{b}$.
Then $\mathfrak{a}$ satisfies $\n\mathfrak{g}_m\subseteq \mathfrak{a}\subseteq \mathfrak{g}_m$, and hence $S/\mathfrak{a}$ is weakly almost Gorenstein by Corollary \ref{c5}.

Remark that in \cite[Proposition 3.5]{CVW}, the authors also determine the structure of Koszul homology algebra of $S/\mathfrak{a}$ along classification results of \cite{AKM,W}.

\end{ex}
\begin{ac}
The authors are grateful to the anonymous referees for reading the paper carefully and giving useful comments and helpful suggestions.
\end{ac}


\begin{thebibliography}{1}
\bibitem{An}
{\sc H. Ananthnarayan}, The Gorenstein colength of an Artinian local ring, 
{\em J. Algebra} {\bf 320} (2008), 3438--3446.
\bibitem{An2}
{\sc H. Ananthnarayan}, Computing Gorenstein colength, 
{\em J. Commut. Algebra} {\bf 1} (2009), no. 3, 343--359.
\bibitem{ABr}
{\sc M. Auslander; M. Bridger}, Stable module theory, {\em Mem. Amer. Math. Soc.} No. 94, {\em American Mathematical Society, Providence, R.I.}, 1969.
\bibitem{ABS}
{\sc L. Avramov; R.-O. Buchweitz; L. \c{S}ega}, Extensions of a dualizing complex by its ring: commutative versions of a conjecture of Tachikawa, {\em J. Pure Appl. Algebra} {\bf 201} (2005), no. 1-3, 218--239. 
\bibitem{AKM}
{\sc L. Avramov; A. Kustin;M. Miller}, Poincar\'e series of modules over local rings of small embedding codepth or small linking number, {\em J. Algebra} {\bf 118} (1988), no. 1, 162--204. 
\bibitem{BF}
{\sc V. Barucci; R. Fr\"oberg}, One-dimensional almost Gorenstein rings, {\em J. Algebra} {\bf 188} (2) (1997) 418--44.
\bibitem{BH}
{\sc W. Bruns; J. Herzog}, Cohen--Macaulay rings, revised edition, Cambridge Studies in Advanced Mathematics, {\bf 39}, {\it Cambridge University Press, Cambridge}, 1998.
\bibitem{C}
{\sc L. W. Christensen}, Gorenstein dimensions, Lecture Notes in Mathematics, {\bf 1747}, {\em Springer-Verlag, Berlin}, 2000.
\bibitem{CVW}
{\sc L. W. Christensen; O. Veliche; J. Weyman}, Trimming a Gorenstein ideal, {\em J. Commut. Algebra} {\bf 11} (2019), no. 3, 325--339. 
\bibitem{DKT}
{\sc H. Dao; T. Kobayashi; R. Takahashi}, Burch ideals and Burch rings, {\em Algebra Number Theory}, (to appear).
\bibitem{D}
{\sc S. Ding}, A note on the index of Cohen-Macaulay local rings, {\em Comm. Algebra} {\bf 21} (1993), no. 1, 53--71.
\bibitem{GIK}
{\sc S. Goto; R. Isobe; S. Kumashiro}, Correspondence between trace ideals and birational extensions with application to the analysis of the Gorenstein property of rings,
{\em J. Pure Appl. Algebra} {\bf 224} (2020), no. 2, 747--767. 
\bibitem{GMP}
{\em  S. Goto; N. Matsuoka; T.T. Phuong}, Almost Gorenstein rings, {\em J. Algebra} {\bf 379} (2013) 355--381.

\bibitem{GTT}
{\sc S. Goto; R. Takahashi; N. Taniguchi}, Almost Gorenstein rings -- towards a theory of higher dimension, {\em J. Pure Appl. Algebra} {\bf 219} (2015), no. 7, 2666--2712.
\bibitem{HH}
{\sc D. Hanes; C. Huneke}, Some criteria for the Gorenstein property, {\em J. Pure Appl. Algebra}, {\bf 201} (2005), no. 1-3, 4--16.
\bibitem{HHS}
{\sc J. Herzog; T. Hibi; D. Stamate}, The trace of the canonical module, {\em Israel J. Math.} {\bf 233} (2019), no. 1, 133--165.
\bibitem{HHS2}
{\sc J. Herzog; T. Hibi; D. Stamate}, Canonical trace ideal and residue for numerical semigroup rings. \texttt{arXiv:2008.01428}.

\bibitem{HMP}
{\sc J. Herzog; F. Mohammadi; J. Page}, Measuring the non-Gorenstein locus of Hibi rings and normal affine semigroup rings, {\em J. Algebra} {\bf 540} (2019), 78--99. 
\bibitem{HV}
{\sc C. Huneke; A. Vraciu}, Rings that are almost Gorenstein, {\em Pacific J. Math.} {\bf 225} (2006), no. 1, 85--102.
\bibitem{I}
{\sc O. Iyama}, Higher-dimensional Auslander–Reiten theory on maximal orthogonal subcategories. {\em Adv. Math.} {\bf 210} (2007), no. 1, 22--50.
\bibitem{IW}
{\sc O. Iyama; M. Wemyss}, The classification of special Cohen--Macaulay modules, {\em Math. Z.} {\bf 265} (2010), no. 1, 41--83.
\bibitem{ua}
{\sc S. B. Iyengar; R. Takahashi}, Annihilation of cohomology and strong generation of module categories, {\em Int. Math. Res. Not. IMRN}, {\bf 2016}, no. 2, 499--535.
\bibitem{lp}
{\sc T. Kobayashi; R. Takahashi}, Rings whose ideals are isomorphic to trace ideals, {\em Math. Nachr.} {\bf 292} (2019), no. 10, 2252--2261.
\bibitem{LW}
{\sc G. J. Leuschke; R. Wiegand}, Cohen--Macaulay Representations, Mathematical Surveys and Monographs, vol. 181, {\em American Mathematical Society, Providence, RI}, 2012.
\bibitem{L}
{\sc H. Lindo}, Trace ideals and centers of endomorphism rings of modules over commutative rings, {\em J. Algebra} {\bf 482} (2017) 102--130. 
\bibitem{SV}
{\sc J. Striuli; A. Vraciu}, Some homological properties of almost Gorenstein rings, {\em Commutative algebra and its connections to geometry}, 201--215, Contemp. Math., {\bf 555}, {\em Amer. Math. Soc., Providence, RI}, 2011.
\bibitem{Vas}
{\sc W. V. Vasconcelos}, Computing the integral closure of an affine domain,
{\em Proc. Amer. Math. Soc.} {\bf 113} (1991), 633--638.
\bibitem{W}
{\sc J. Weyman}, On the structure of free resolutions of length 3, {\em J. Algebra} {\bf 126} (1989), no. 1, 1--33.
\bibitem{Y}
{\sc Y. Yoshino}, Cohen--Macaulay modules over Cohen--Macaulay rings, London Mathematical Society Lecture Note Series, {\bf 146}, {\em Cambridge University Press, Cambridge}, 1990.
\end{thebibliography}
\end{document}